\def\OL{\relax\ifmmode {\sf L}\else{\textsf L}\fi}
\def\OR{\relax\ifmmode {\sf R}\else{\textsf R}\fi}
\newcommand{\be}{\begin{equation}}
\newcommand{\en}{\end{equation}}
\newcommand{\bea}{\begin{eqnarray}}
\newcommand{\ena}{\end{eqnarray}}
\newcommand{\beano}{\begin{eqnarray*}}
\newcommand{\enano}{\end{eqnarray*}}
\newcommand{\bee}{\begin{enumerate}}
\newcommand{\ene}{\end{enumerate}}
\newcommand{\bei}{\begin{itemize}}
\newcommand{\eni}{\end{itemize}}
\newcommand{\ad}{^{\mbox{\scriptsize $\dag$}}}
\newcommand{\mult}{\,{\scriptstyle \square}\,}
\newcommand{\vp}{\varphi}
\newcommand{\mc}{\mathcal}
\newcommand{\mb}{\mathbb}
\newcommand{\A}{\mathfrak{A}}
\newcommand{\Ao}{\mathfrak{A}_0}
\newcommand{\R}{R \!\!\!\! R}
\newcommand{\Hil}{{\mc H}}
\newcommand{\I}{{\mc I}}
\newcommand{\Lc}{{\mc L}}
\newcommand{\LL}{{\mc L}}
\newcommand{\D}{{\mc D}}
\newcommand{\M}{{\mc M}}
\newcommand{\BB}{{\mathfrak B}}
\newcommand{\K}{{\mathfrak K}}
\newcommand{\pppb}{{\P_{\BB}(\A)}}
\def\NG{{\mathfrak N}}
\def\L{{\mc L}}
\newcommand{\up}{\upharpoonright}
\def\H{{\mathcal H}}
\def\P{{\mathcal P}}
\def\R{{\mathcal R}}
\newtheorem{defn}{Definition}[section]
\newtheorem{thm}[defn]{Theorem}
\newtheorem{prop}[defn]{Proposition}
\newtheorem{lemma}[defn]{Lemma}
\newtheorem{cor}[defn]{Corollary}
\newtheorem{example}[defn]{Example}
\newtheorem{examples}[defn]{Examples}
\newtheorem{rem}[defn]{Remark}
\def\x{\relax\ifmmode {\mbox{*}}\else*\fi}
\newcommand{\beex}{\begin{example}$\!\!${\bf }$\;$\rm }
\newcommand{\enex}{ \end{example}}
\newcommand{\beexs}{\begin{examples}$\!\!${\bf }$\;$\rm }
\newcommand{\enexs}{ \end{examples}}
\newcommand{\berem}{\begin{rem}$\!\!${\bf }$\;$\rm }
\newcommand{\enrem}{ \end{rem}}
\newcommand{\bedefi}{\begin{defn}$\!\!${\bf }$\;$\rm }
\newcommand{\findefi}{\end{defn}}
\newcommand{\becor}{\begin{cor}$\!\!\!$  }
\newcommand{\encor}{\end{cor}}
\newcommand{\rcab}{{\mc R}_c(\A,\BB)}
\newcommand{\ha}{^{\ast}}
\newcommand{\ip}[2]{\langle {#1}|{#2}\rangle}
\newcommand{\LD}{\Lc^\dagger(\D)}
\newcommand{\LDH}{{\LL}\ad(\D,\H)}
\newcommand{\LDO}[1]{{\LL}\ad(#1)}
\newcommand{\w}{{\rm w}}
\newcommand{\rep}{{\rm Rep}_c(\A)}
\newcommand{\repb}{{\rm Rep}_c(\A,\BB)}
\newcommand{\pa}{partial \mbox{*-algebra}}
\newcommand{\das}{^{\dag {\rm\textstyle *}}}
\newcommand{\LBDH}{{\L}_b\ad(\D,\H)}
\newcommand{\LBD}{{\L}_b\ad(\D)}
\def\MM{{\mathfrak M}}
\newcommand{\noi}{\noindent}
\newcommand{\po}{partial O\mbox{*-algebra}}
\newcommand{\wmult}{{\scriptscriptstyle \Box}}
\renewcommand{\leq}{\leqslant}
\renewcommand{\geq}{\geqslant}
\begin{document}
\title[Fully representable and *-semisimple partial *-algebras]
{Fully representable and *-semisimple topological partial *-algebras}
\date{\today}

\author{J-P. Antoine}
\address{Institut de Recherche en Math\'ematique et Physique \\
Universit\'e Catholique de Louvain\\
B-1348   Louvain-la-Neuve\\
Belgium}
\email{jean-pierre.antoine@uclouvain.be}

\author{G. Bellomonte}
\address{Dipartimento di Matematica e Informatica,
Universit\`a di Palermo, I-90123 Palermo, Italy}
\email{bellomonte@math.unipa.it}

\author{C. Trapani}
\address{Dipartimento di Matematica e Informatica,
Universit\`a di Palermo, I-90123 Palermo, Italy}
\email{trapani@unipa.it}

\begin{abstract}
We continue our study of topological partial *-algebras,  focusing our attention to *-semisimple partial *-algebras, that is,
those that possess   a  {multiplication core} and sufficiently many *-representations.
We discuss the respective roles of   invariant  positive sesquilinear (ips) forms and representable continuous linear functionals  and focus on the case where the two notions
are completely interchangeable (fully representable \pa s) with the scope of characterizing a  *-semisimple partial *-algebra. 
Finally we describe various notions of boun\-ded elements in such a \pa, in particular, those defined in terms of a positive cone (order bounded elements).
The outcome is that, for an appropriate order relation,
one recovers the $\M$-bounded elements introduced in previous works.
\end{abstract}
\keywords{topological partial *-algebras;   *-semisimple partial *-algebras; bounded elements}
\subjclass[2010]{ 08A55; 46K05; 46K10; 47L60}
\maketitle

\section{Introduction}\label{sect_intro}

Studies on partial *-algebras have provided so far a considerable amount of information about their representation
theory and their structure. Many results have been obtained for concrete  partial *-algebras,
i.e., partial *-algebras of closable operators (the so-called partial O*-algebras), but  a substantial body of knowledge has been
gathered also for abstract \pa s. A full analysis  has
been developed by Inoue and two of us some time ago and it can be found in the monograph \cite{ait_book}, where earlier articles are quoted.

In a recent paper \cite{att_2010}, we have started the analysis of certain types of bounded elements in a  partial *-algebra  $\A$  and their incidence on the representation theory of $\A$.
It was shown, in particular, that the crucial condition is that $\A$ possesses sufficiently many invariant  positive sesquilinear forms (ips-forms). The latter, in turn, generate
  *-representations, that is, *-homomorphisms into a partial O*-algebra, via the well-known GNS construction. As in the particular case of a \po, a spectral theory can  then be developed,
provided the \pa\ has sufficiently many bounded elements. To that effect, we have introduced in \cite{att_2010} the notion of \emph{$\M$-bounded elements}, associated to a sufficiently large  family  $\M$ of ips-forms.

We continue this study in the present work, focusing on topological \pa s that possess what we call a \emph{multiplication core}, that is, a dense subset of universal right multipliers with
all the regularity properties necessary for a decent representation theory. In particular, we will require that our \pa\
has sufficiently many *-representations, a property usually characterized, for topological *-algebras, in terms of
the so-called \emph{*-radical}. When the latter is reduced to $\{0\}$, the \pa\ is called \emph{*-semisimple}, the main subject of the paper. According to what we just said,  *-semisimplicity
is defined in terms of a family  $\M$ of ips-forms. Since it may be difficult to identify such a family in practice, we examine in what sense ips-forms  may be replaced by a special class of
continuous linear functionals, called \emph{representable}. {  This leads to identify a class of topological \pa s for which representable linear functionals and
  { ips-forms can be freely   replaced by one another}, since every representable linear functional comes (as for *-algebras with unit) from an ips-form.
These \pa s are called \emph{fully representable}
 (extending the analogous concept discussed in \cite{FTT} for locally convex quasi *-algebras) and the interplay of this notion with *-semisimplicity is investigated.}

This being done, we may come back to bounded elements of a *-semisim\-ple \pa,
 more precisely to elements bounded with respect to some positive cone, thus defined in purely algebraic terms. Early work in that direction   has been done by Vidav \cite{vidav} and  Schm\"udgen \cite{schm_weyl},  then generalized in our previous paper  \cite{att_2010}.
Here we consider several types of order on a \pa\ and analyze the corresponding notion of order bounded elements.
The outcome is that, under appropriate conditions, the correct notion  reduces to that of $\M$-bounded ones introduced in \cite{att_2010}.
Therefore, when the \pa\ has sufficiently many such elements, the whole spectral theory developed  in \cite{antratsc} and   \cite{att_2010} can be recovered.

The paper is organized as follows.  Section \ref{sect_prelim}  is devoted to some preliminaries  about \pa s, taken mostly from \cite{ait_book} and \cite{antratsc, att_2010}.
In addition, we introduce the notion of multiplication core and draw some consequences.
We introduce  in Section \ref{sect-suffmanyrep} the notion of *-semisimple \pa\ and discuss some of its properties. In Section \ref{sect-represfunct}, we compare the respective roles of
ips-forms and representable linear functionals, with particular reference to fully representable \pa s, and discuss the relationship of the latter notion with that of *-semisimple \pa. Finally, Section \ref{sect-bddelem} is devoted to the various notions of bounded elements, from
$\M$-bounded to order bounded ones.
\medskip

\section{Preliminaries}\label{sect_prelim}

\medskip The following preliminary definitions will be needed in the sequel. For more details we  {  refer to \cite{ait_book, schmu}.}

A \pa\ $\A$ is a complex vector space with conjugate linear involution  $\ha $ and a distributive partial multiplication
$\cdot$, defined on a subset $\Gamma \subset \A \times \A$, satisfying the property that $(x,y)\in \Gamma$ if, and only if,
$(y\ha ,x\ha )\in   \Gamma$ and $(x\cdot y)\ha = y\ha \cdot x\ha $.  From now on we will write simply $xy$ instead of $x\cdot y$ whenever
$(x,y)\in \Gamma$. For every $y \in \A$, the set of left (resp. right) multipliers of $y$ is denoted by $L(y)$ (resp. $R(y)$), i.e.,
$L(y)=\{x\in \A:\, (x,y)\in \Gamma\}$ (resp. $R(y)=\{x\in \A:\, (y,x)\in \Gamma\}$). We denote by $L\A$ (resp. $R\A$)  the space of universal left (resp. right) multipliers of $\A$.

In general, a \pa\ is not associative, but in several situations a weaker form of associativity holds. More precisely, we say
that $\A$ is \emph{semi-associative} if $y \in R(x)$ implies $yz\in R(x)$, for every $z \in R\A$, and
 $$
(xy)z=x(yz).
$$
The partial *-algebra $\A$ has a  unit if  there exists an element $e\in \A$ such that $e=e\ha$, $e\in R\A\cap L\A$ and $xe=ex=x$, for every $x\in \A$.

Let $\H$ be a complex Hilbert space and $\D$ a dense subspace of $\H$.
 We denote by $ \L\ad(\D,\H) $
the set of all (closable) linear operators $X$ such that $ {D}(X) = {\D},\; {D}(X\x) \supseteq {\D}.$ The set $
\L\ad(\D,\H ) $ is a  \pa\
 with respect to the following operations: the usual sum $X_1 + X_2 $,
the scalar multiplication $\lambda X$, the involution $ X \mapsto X\ad := X\x \up {\D}$ and the \emph{(weak)}
partial multiplication $X_1 \mult X_2 := {X_1}\ad\x X_2$, defined whenever $X_2$ is a weak right multiplier of
$X_1$ (we shall write $X_2 \in R^{\rm w}(X_1)$ or $X_1 \in L^{\rm w}(X_2)$), that is, whenever $ X_2 {\D} \subset
{\D}({X_1}\ad\x)$ and  $ X_1\x {\D} \subset {\D}(X_2\x).$

It is easy to check that $X_1 \in L^{\rm w}(X_2)$ if and only if there exists $Z \in \LDH$ such that
\begin{equation} \label{altwp}
\ip{X_2\xi}{X_1\ad \eta} = \ip{Z\xi}{\eta}, \quad \forall \xi, \eta \in \D.
\end{equation}
In this case $Z= X_1 \mult X_2$.
  $\LDH$ is neither associative nor semi-associative.
If $I$ denotes the identity operator of $\H$,   $I_\D:=I\up\D$  is the unit of the partial *-algebra $ \L\ad(\D,\H)$.

If $\NG \subseteq \LDH$ we denote by $R^{\w}\NG$ the set of right multipliers of all elements of $\NG$. We recall that
 \be\label{eq:RLDH}
R\LDH \equiv R^{\rm w}\LDH= \{A\in \LDH: A\mbox{ bounded and } A:\D \to \D\ha \},
\en
 where $$
\D\ha =\bigcap_{X\in \LDH}D({X\das}).
$$

We denote by $\LBDH$ the bounded part of $\LDH$, i.e., $\LBDH=\{X \in \LDH : X$  is a bounded operator$\}=\{X \in \LDH : \overline{X}\in {\mc B}(\H)\}$.

{A $\ad $-invariant} subspace $\MM$ of $\LDH$ is called a \emph{(weak) partial O*-algebra} if $X\mult Y \in \MM$, for every
$X, Y \in \MM$ such that $X \in L^{\rm w}(Y)$. $\LDH$ is the maximal partial O*-algebra on $\D$.

 The set $\LD:=\{X\in \LDH:\, X, X\ad:\D \to \D$\} is a *-algebra; more precisely, it is the maximal O*-algebra on $\D$
(for the theory of O*-algebras and their representations we refer to \cite{schmu}).

\medskip
In the sequel, we will need the following topologies on $\LDH$:
\bei
\item The \emph{strong topology} ${\sf t}_s$ on $\LDH$, defined by the seminorms
$$ p_\xi(X)=\|X\xi\|, \quad X \in \LDH, \, \xi \in \D.$$

\item  The \emph{strong* topology} ${\sf t}_{s^\ast}$ on  $\LDH$, defined by the seminorms
 $$p^*_\xi (X)= \max\{\|X\xi\|, \|X\ad\xi\|\}, \,  \xi \in \D.$$
\eni

\vspace{2mm} A \emph{*-representation} of a  \pa\ $\A$ in the
Hilbert space $\H$ is a linear map $\pi : \A \rightarrow\L\ad(\D(\pi),\H)$     such that:
(i) $\pi(x\x) = \pi(x)\ad$ for every $x \in \A$; (ii) $x \in L(y)$
in $\A$ implies $\pi(x) \in L^{\rm w}(\pi(y))$ and $\pi(x) \mult\pi(y) = \pi(xy).$ The subspace $\D(\pi)$ is called the \emph{domain} of the *-representation $\pi$.
 The *-repres\-entation  $\pi$ is said to be \emph{bounded} if $\overline{\pi (x)} \in {\mc B}(\H)$ for every $x \in\A$.

\vspace{2mm} Let $\varphi$ be a positive sesquilinear form on
$D(\varphi) \times D(\varphi)$, where $D(\varphi)$ is a
subspace of $\A$. Then we have
\begin{align}
\varphi(x,y) &= \overline{\varphi(y,x)}, \ \ \ \forall \, x, y \in
D(\varphi),
\\
 |\varphi(x,y)|^2 &\leqslant \varphi(x,x) \varphi(y,y), \ \ \
\forall \, x, y \in D(\varphi). \label{2.2}
\end{align}
We put
\[
N_\varphi= \{ x \in D(\varphi) : \varphi(x,x)=0\}. \] By
\eqref{2.2}, we have
\[
N_\varphi= \{ x \in D(\varphi) : \varphi(x,y)=0, \ \ \ \forall \,
y \in D(\varphi) \},
\]
and so $N_\varphi$ is a subspace of $D(\varphi)$ and the quotient
space $D(\varphi) / N_\varphi := \{ \lambda_\varphi(x) \equiv
x + N_\varphi ; x \in D(\varphi) \}$ is a pre-Hilbert space with
respect to the inner product $$\ip{\lambda_\varphi(x)}{\lambda_\varphi(y)}
= \varphi(x, y), \quad x,y \in D(\varphi).$$ We
denote by $\H_\varphi$ the Hilbert space obtained by completion of $D(\varphi) / N_\varphi$.

\medskip
A positive sesquilinear form $\vp$  on $\A \times \A$ is said to be \emph{invariant}, and called an \emph{ips-form}, if
there exists a subspace $B(\varphi) $
of $\A$ (called a \emph{core} for $\varphi$) with the properties
\begin{itemize}
\item[({\sf ips}$_1$)] $B(\varphi) \subset R\A$;

\item[({\sf ips}$_2$)] $\lambda_\varphi(B(\varphi))$ is dense in $\H_\varphi$;

\item[({\sf ips}$_3$)]  $\varphi(xa, b) = \varphi(a, x\x b), \, \forall \, x\in \A, \forall \, a,b \in B(\varphi)$;

\item[({\sf ips}$_4$)] $\varphi(x\x a, yb) = \varphi(a, (xy)b), \, \forall \, x \in L(y), \forall \, a,b \in B(\varphi)$.
\end{itemize}
In other words, an ips-form is an \emph{everywhere defined} biweight, in the sense of \cite{ait_book}.

 To every ips-form $\vp$ on $\A$, with core $B(\varphi) $, there corresponds a triple $(\pi_\vp, \lambda_\vp, \H_\vp)$, where $\H_\vp$ is a Hilbert space,
$\lambda_\vp$ is a linear map from $B(\varphi) $ into $\H_\vp$ and $\pi_\vp$ is a *-representation of $\A$ in the
Hilbert space $\H_\vp$. We refer to  \cite{ait_book} for more details on this celebrated GNS construction.

Let $\A$ be a \pa\ and $\pi$ a *-representation of $\A$ in $\D(\pi)$. For $\xi \in \D(\pi)$ we put
\be \label{eq-phipixi}
 \vp_\pi^\xi (x,y) := \ip{\pi(x)\xi}{\pi(y)\xi}, \quad x,y \in \A.
\en
Then, $\vp_\pi^\xi$ is a positive sesquilinear form on $\A\times \A$.

Let $\BB\subseteq R\A$ and assume that $\pi (\BB)\subset \LDO{\D(\pi)}$. Then it is easily seen that $\vp_\pi^\xi$ satisfies the conditions ({\sf ips}$_3$) and ({\sf ips}$_4$) above. However, $\vp_\pi^\xi$ is not necessarily an ips-form since $\pi(\BB)\xi$ may fail to be dense in $\H$. For this reason, the following notion of \emph{regular} *-representation was introduced in \cite{ct_ban}.

\bedefi
A *-representation $\pi$ of $\A$ with domain $\D(\pi)$ is called $\BB$-\emph{regular} if $\vp_\pi^\xi$ is an ips-form with core $\BB$, for every $\xi \in \D(\pi)$.
\findefi

\berem The notion of regular *-representation was given in \cite{att_continuoushomom} for a larger class of positive sesquilinear forms (biweights) referring to the {\em natural} core
$$ B(\vp_\pi^\xi)=\{ a \in R\A: \pi(a) \in \D^{**}(\pi)\}$$ (we refer to \cite{ait_book} for precise definitions). If $\pi(\BB)\subset \LDO{\D(\pi)}$, the $\BB$-regularity implies that  $\vp_\pi^\xi$ is an also ips-form with core
$B(\vp_\pi^\xi)$. We will come back to this point in Proposition \ref{regrep}.
\enrem

\medskip
 Let $\A$ be a \pa. We assume that $\A$ is a locally convex Hausdorff vector space under the topology
$\tau$ defined by a (directed) set $\{p_\alpha\}_{\alpha \in \I}$ of seminorms. Assume that\footnote{\, Condition ({\sf cl}) was called
({\sf t1}) in \cite{antratsc}.}
\begin{itemize}

\item[({\sf cl})] for every $x \in \A$, the linear map $\OL_x: R(x)\mapsto \A$ with $\OL_x(y)=xy$, $y\in R(x)$,
is closed with respect to $\tau$, in the sense that, if $\{y_\alpha\}\subset R(x) $ is a net such that $y_\alpha \to y$   and $xy_\alpha \to z \in \A$, then $y\in R(x)$
and $z=xy$. 
 \end{itemize}
For short, we will say that, in this case, $\A$ is a \emph{topological partial *-algebra}. If the involution $x\mapsto x^*$ is continuous, we say that $\A$ is a \emph{*-topological} \pa.

 Starting from the family of seminorms $\{p_\alpha\}_{\alpha \in
\I}$, we can define a second topology $\tau\ha $ on $\A$ by
introducing the set of seminorms $\{ p\ha _\alpha(x)\}$, where
$$
p\ha _\alpha(x)= \max\{p_\alpha(x), p_\alpha(x\ha )\}, \quad x \in \A.
$$
The involution $x\mapsto x\ha $  is automatically
$\tau\ha $-continuous. By ({\sf cl}) it follows that, for every $x \in
\A$, both maps $\OL_x$, $\OR_x$ are $\tau\ha $-closed.  Hence, $\A[\tau\ha]$ is a *-topological \pa.

In this paper we will  consider the following particular classes of topological partial *-algebras.

\bedefi \label{def:multcore}
Let $\A[\tau]$ be a  topological partial *-algebra with locally convex topology $\tau$. Then,

\bee
\item
A subspace $\BB$ of $R\A$ is called a \emph{multiplication core} if
\begin{itemize}
\item[({\sf d}$_1$)]$e\in \BB$ if $\A$ has a unit $e$;
\item[({\sf d}$_2$)]$\BB\cdot \BB \subseteq \BB$;
\item[({\sf d}$_3$)] $\BB$ is $\tau\ha$-dense in $\A$;
\item[({\sf d}$_4$)] for every $b\in\BB$, the map $x\mapsto xb$ , $\, x\in \A$,  is $\tau$-continuous;
\item[({\sf d}$_5$)] one has $b\ha(xc) = (b\ha x)c, \; \forall \, x\in\A, b,c\in \BB$.
\end{itemize}

\item
 $\A[\tau]$ is called  \emph{$\Ao$-regular} if it possesses \footnote{\, In \cite{att_2010} it was only supposed that $\Ao$ is $\tau$-dense in $\A$.} a multiplication core $\Ao$ which is a *-algebra and, for every $b \in \Ao$, the map $x\mapsto bx$ , $x \in \A$, is $\tau$-continuous  (\cite[Def. 4.1]{att_2010}).
\ene
\findefi

\noi  If $\A$ is  \emph{$\Ao$-regular} and if, in addition, the  involution $x\mapsto x\ha$ is $\tau$-continuous for all $x\in\A$, then the couple
$(\A,\Ao)$ is   a locally convex \emph{quasi *-algebra}.

\berem \label{rem_semiass} A simple  limiting  argument shows that, if $\BB$ is an algebra (i.e., it is also associative),  then $\A$ is a $\BB$-right module, i.e.,
$$ (xa)b=x(ab), \; \forall\, x \in \A,\,a,b \in \BB.$$
If $\A$ is $\Ao$-regular then, in  a similar way,
$$ \mbox{$(xa)b=x(ab)$, $(ax)b = a(xb)$ for every $x \in \A$, $a,b \in \Ao$.}$$
\enrem

\berem We warn the reader that an $\Ao$-regular topological partial *-algebra $\A[\tau]$ is not necessarily a locally convex partial *-algebra in the sense of \cite[Def. 2.1.8]{ait_book}. Neither need it be  topologically regular in the sense of \cite[Def. 2.1.8]{att_2010}, which is a more restrictive notion.
\enrem

\berem Let $\A[\tau]$ be an $\Ao$-regular  topological partial *-algebra. Then, for every $b\in\Ao$, the maps $x\mapsto xb$ and $x\mapsto bx, \, x\in \A$, are also $\tau\ha$-continuous. However, the density of $\Ao$ in $\A[\tau\ha]$ may fail. Thus $\A[\tau\ha]$ need not be an $\Ao$-regular  *-topological partial *-algebra.
\enrem

\beexs
The three notions given in Definition \ref{def:multcore} are really different.

(1) Take $\A = \LDH$. Then, $R\A$ is given in  \eqref{eq:RLDH}, so that we have
 an example where  $R\A \cdot R\A \not \subset R\A$.

(2) Take again $\A = \LDH[{\sf t}_{s^\ast}]$. Then $\LDH$ is $\Ao$-regular for $\Ao= \LBD$.

(3) Assume $\LDH[{\sf t}_{s^\ast}]$   is self-adjoint, i.e. $\D  = \D\ha$ (for instance, when $\D=D^\infty(A)$ for a self-adjoint operator $A$). Then
$R\LDH = \{X\in \LBDH : X:\D \to\D\}$ is an algebra, but it is not *-invariant. Hence it is a multiplication core, since it is ${\sf t}_{s^\ast}$-dense in $\LDH$,  but  $\LDH$ is not $R\LDH$-regular.
\enexs

 The case   of a locally convex quasi *-algebra  $(\A,\Ao)$ was studied in \cite{FTT}
and a number of interesting properties have  been derived. Some of these extend to the general case of a \pa, as we shall see in the sequel.

\begin{prop}\label{regrep} Let $\A[\tau]$ be a topological partial *-algebra and $\BB$ a multiplication core. Then every $(\tau,{\sf t}_{s})$-continuous *-representation of $\A$ is $\BB$-regular.
\end{prop}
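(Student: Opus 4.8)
The plan is to unwind the definition: $\BB$-regularity of $\pi$ means that for every $\xi\in\D(\pi)$ the positive sesquilinear form $\vp_\pi^\xi(x,y)=\ip{\pi(x)\xi}{\pi(y)\xi}$ is an ips-form with core $\BB$, so I must verify the four conditions ({\sf ips}$_1$)--({\sf ips}$_4$). The one structural tool I would set up first is the map $U\colon\lambda_{\vp_\pi^\xi}(x)\mapsto\pi(x)\xi$: since $\|\lambda_{\vp_\pi^\xi}(x)\|^2=\vp_\pi^\xi(x,x)=\|\pi(x)\xi\|^2$, it extends to an isometry of $\H_{\vp_\pi^\xi}$ onto $\overline{\pi(\A)\xi}$ which carries $\lambda_{\vp_\pi^\xi}(\BB)$ onto $\pi(\BB)\xi$. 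Condition ({\sf ips}$_1$) is immediate, as $\BB\subseteq R\A$ by the definition of a multiplication core.

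Next I would settle ({\sf ips}$_2$), which is where continuity enters and is really the content of the statement. By ({\sf d}$_3$), $\BB$ is $\tau\ha$-dense, hence $\tau$-dense, in $\A$, so for each $x\in\A$ there is a net $(b_\alpha)\subseteq\BB$ with $b_\alpha\to x$ in $\tau$. The $(\tau,{\sf t}_s)$-continuity of $\pi$ then gives $\|\pi(b_\alpha)\xi-\pi(x)\xi\|=p_\xi(\pi(b_\alpha)-\pi(x))\to 0$, so $\pi(x)\xi\in\overline{\pi(\BB)\xi}$. Hence $\overline{\pi(\BB)\xi}=\overline{\pi(\A)\xi}$, and through $U$ this says precisely that $\lambda_{\vp_\pi^\xi}(\BB)$ is dense in $\H_{\vp_\pi^\xi}$.

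For the two invariance identities I would first dispose of ({\sf ips}$_3$) purely algebraically, and here no domain hypothesis is needed. Since $a,b\in\BB\subseteq R\A$, the involutions $a\ha,b\ha$ are universal left multipliers, so all the products below are defined and Hilbert adjoints may be transferred onto $\xi\in\D(\pi)$: from $\pi(b\ha(xa))=\pi(b)\x\pi(xa)$ one gets $\vp_\pi^\xi(xa,b)=\ip{\pi(b\ha(xa))\xi}{\xi}$; then ({\sf d}$_5$) rewrites $b\ha(xa)=(b\ha x)a$, and transferring back through $a\in R\A$ yields $\ip{\pi(a)\xi}{\pi(x\ha b)\xi}=\vp_\pi^\xi(a,x\ha b)$. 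Condition ({\sf ips}$_4$) is genuinely harder because both arguments now involve generic elements. The clean argument requires the vectors $\pi(a)\xi,\pi(b)\xi$ to lie in $\D(\pi)$: then $\pi(z)\x$ restricts to $\pi(z)\ad=\pi(z\ha)$ on $\D(\pi)$, whence $\pi(x\ha a)\xi=\pi(x\ha)\pi(a)\xi$ and $\pi(yb)\xi=\pi(y)\pi(b)\xi$, and the weak-multiplication relation \eqref{altwp} for $\pi(x)\mult\pi(y)=\pi(xy)$, tested on $\zeta=\pi(b)\xi$ and $\eta=\pi(a)\xi\in\D(\pi)$, collapses both sides to $\ip{\pi(a)\xi}{\pi(xy)\pi(b)\xi}$, giving $\vp_\pi^\xi(x\ha a,yb)=\vp_\pi^\xi(a,(xy)b)$.

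The main obstacle is therefore the \emph{domain invariance} $\pi(\BB)\D(\pi)\subseteq\D(\pi)$, that is $\pi(\BB)\subseteq\LDO{\D(\pi)}$, which is exactly the hypothesis under which ({\sf ips}$_3$)--({\sf ips}$_4$) were noted to be ``easily seen''. From the partial-algebra relations alone one only extracts $\pi(\BB)\D(\pi)\subseteq\bigcap_{z\in\A}D(\pi(z)\x)$, the larger adjoint domain: a universal right multiplier maps the domain into the ``$\ad$-domain'', not back into $\D(\pi)$ itself (compare \eqref{eq:RLDH}). Returning to $\D(\pi)$ is what the continuity must buy, and I expect this to be the delicate point; I would try to combine $(\tau,{\sf t}_s)$-continuity with the core axioms ({\sf d}$_2$), ({\sf d}$_4$) and the closedness ({\sf cl}) of $\A$ to show that $\pi(b)\xi$ is actually reached inside $\D(\pi)$. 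Everything else---({\sf ips}$_1$), the density ({\sf ips}$_2$), and the algebraic identity ({\sf ips}$_3$)---is by comparison routine.
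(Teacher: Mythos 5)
Your verifications of ({\sf ips}$_1$), of ({\sf ips}$_2$) (your density argument is exactly the one the paper uses, via $\|\lambda_{\vp_\pi^\xi}(x)-\lambda_{\vp_\pi^\xi}(b_\alpha)\|^2=\vp_\pi^\xi(x-b_\alpha,x-b_\alpha)\to 0$), and even of ({\sf ips}$_3$) via ({\sf d}$_5$) and two applications of \eqref{altwp} are sound. But the proposal stops precisely at the decisive point: ({\sf ips}$_4$) is never established, and the route you announce for closing the gap --- deriving the domain invariance $\pi(\BB)\D(\pi)\subseteq\D(\pi)$ for $\pi$ itself from $(\tau,{\sf t}_s)$-continuity together with ({\sf d}$_2$), ({\sf d}$_4$) and ({\sf cl}) --- cannot work, because that invariance is false in general. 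Take $\A=\BB=C([0,1])$ with the norm topology (a multiplication core for itself), let $\pi_0$ be the multiplication representation on $L^2[0,1]$, and set $\pi(x):=\pi_0(x)\upharpoonright\D$ where $\D$ is the dense subspace of polynomials. This $\pi$ is a $(\tau,{\sf t}_s)$-continuous *-representation, yet $\pi(b)\xi\notin\D(\pi)$ for any non-polynomial $b$ and $\xi=1$. So no amount of continuity ``buys back'' the domain; your own correct observation that a universal right multiplier only maps $\D(\pi)$ into the larger adjoint domain (compare \eqref{eq:RLDH}) is the end of that road. Note that in this example $\vp_\pi^\xi$ \emph{is} still an ips-form --- which signals that ({\sf ips}$_4$), an identity about the form, should not be proved through invariance of $\D(\pi)$ at all.

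The paper's proof resolves the obstacle in the opposite direction: instead of forcing $\pi(\BB)\xi$ back into $\D(\pi)$, it \emph{enlarges} the domain. One sets $D(\pi_1):=\{\xi_0+\sum_{i=1}^n\pi(b_i)\xi_i:\, b_i\in\BB,\ \xi_i\in\D(\pi)\}$ and $\pi_1(x)(\xi_0+\sum_i\pi(b_i)\xi_i):=\pi(x)\xi_0+\sum_i(\pi(x)\mult\pi(b_i))\xi_i$; here ({\sf d}$_2$) guarantees $\pi_1(\BB)\subset\LDO{\D(\pi_1)}$ (since $bb_i\in\BB$), and ({\sf d}$_4$) combined with the $(\tau,{\sf t}_s)$-continuity of $\pi$ shows that $\pi_1$ is again $(\tau,{\sf t}_s)$-continuous, because $x_\alpha\stackrel{\tau}{\to}x$ gives $x_\alpha b\stackrel{\tau}{\to}xb$ and hence $(\pi(x_\alpha)\mult\pi(b))\xi\to(\pi(x)\mult\pi(b))\xi$. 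Since $\pi_1$ extends $\pi$, one has $\vp_\pi^\xi=\vp_{\pi_1}^\xi$ for every $\xi\in\D(\pi)$, and ({\sf ips}$_3$)--({\sf ips}$_4$) for this form follow from the standing remark that they hold whenever the operators representing $\BB$ leave the domain invariant --- which is true for $\pi_1$ by construction. With that extension step in place, your isometry $U$ and density argument finish the proof; without it (or some substitute), your treatment of ({\sf ips}$_4$) is missing and the proposal is incomplete.
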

\begin{proof} First we may assume that $\pi(\BB)\subset \LDO{\D(\pi)}$. Indeed, put
\begin{align*}
& D(\pi_1):= \left\{\xi_0 + \sum_{i=1}^n \pi(b_i) \xi_i: \, b_i \in \BB,\, \xi_i \in \D(\pi); i=0, 1,\ldots, n \right\} , \\
& \pi_1(x)\left(\xi_0 +\sum_{i=1}^n \pi(b_i) \xi_i \right):= \pi(x)\xi_0  + \sum_{i=1}^n (\pi(x)\mult \pi(b_i))\xi_i.
\end{align*}
Then, exactly as in \cite{att_2010} we can prove that $\pi_1$ is a *-representation of $\A$ with $\pi_1(\BB)\subset \LDO{\D(\pi_1)}$.

If $\pi$ is $(\tau,{\sf t}_{s})$-continuous, then $\pi_1$ is $(\tau,{\sf t}_{s})$-continuous too (recall that domains are different!). Indeed, if $x_\alpha \stackrel{\tau}{\to} x$, then $\pi(x_\alpha)\xi \to \pi(x)\xi$, for every $\xi \in \D(\pi)$. The continuity of the right multiplication then implies that $x_\alpha b \stackrel{\tau}{\to} x b$, for every $b \in \BB$. Thus, by the continuity of $\pi$, we get, for every $b \in \BB$,  $\pi(x_\alpha b)\xi \to \pi(x b)\xi $, for every $\xi \in \D(\pi)$ or, equivalently, $(\pi(x_\alpha)\mult\pi(b))\xi \to (\pi(x)\mult\pi(b))\xi$, for every $\xi \in \D(\pi)$. Hence
\begin{align*}
\pi_1(x_\alpha) \left(\sum_{i=1}^n \pi(b_i) \xi_i \right)&= \sum_{i=1}^n (\pi(x_\alpha)\mult \pi(b_i))\xi_i \to\\
& \sum_{i=1}^n (\pi(x)\mult \pi(b_i))\xi_i = \pi_1(x)\left(\sum_{i=1}^n \pi(b_i) \xi_i\right).
\end{align*}
Thus, every $(\tau,{\sf t}_{s})$-continuous *-representation $\pi$ extends to a $(\tau,{\sf t}_{s})$-continuous \mbox{*-representation} $\pi_1$ with $\pi_1(\BB)\subset \LDO{\D(\pi_1)}$.
Finally we prove the  { $\BB$-regularity} of $\pi$. If $x \in \A$ then there exists a net $\{b_\alpha\} \subset \BB$ such that $b_\alpha \stackrel{\tau}{\to} x$. Then we have
$$
\| \lambda_{\vp_\pi^\xi}(x)-\lambda_{\vp_\pi^\xi}(b_\alpha)\|^2 = \vp_\pi^\xi(x-b_\alpha, x-b_\alpha)\leq p(x-b_\alpha)^2 \to 0,
$$
where $p$ is a convenient $\tau$-continuous seminorm. This implies that $\lambda_{\vp_\pi^\xi}(\BB)$ is dense in $\H_{\vp_\pi^\xi}$. Hence $\vp_\pi^\xi$ is an ips-form  {  with core $\BB$.}
\end{proof}

Let $\A[\tau]$ be a  topological partial *-algebra with multiplication core $\BB$ and $\vp$  a positive
 sesquilinear forms on $\A\times\A$ for which the conditions ({\sf ips}$_1$), ({\sf ips}$_3$) and ({\sf ips}$_4$) are satisfied (with respect to $\BB$). Suppose that
$\varphi$ is $\tau$-continuous, i.e., there exist $p_\alpha$, $\gamma>0$ such that:
$$
|\varphi(x,y)|\leq \gamma \,p_\alpha(x)p_\alpha(y) \quad \forall x,y \in \A.
$$
Then ({\sf ips}$_2$) is also satisfied and, therefore,
$\BB$ is a core for $\varphi$, so that $\varphi$ is an ips-form.
We denote by $\pppb$   the set of \emph{all} $\tau$-continuous ips-forms with core $\BB$.

Using the continuity of the multiplication and Remark \ref{rem_semiass}, it is easily seen that
if $\vp \in \pppb$ and $a\in \BB$, then $\vp_a \in \pppb$, where
$$ \vp_a(x,y):= \vp(xa, ya), \quad x,y \in \A.$$

\section{Topological partial *-algebras with sufficiently many *-representations}
\label{sect-suffmanyrep}

Throughout this paper we will be mostly concerned with topological partial *-algebras possessing sufficiently many continuous *-representations.
In the case of topological *-algebras this situation can be studied by introducing the so-called (topological) *-radical of the algebra. Thus we extend this notion to topological \pa s.

\medskip Let $\A[\tau]$ be a topological partial *-algebra. We define the \emph{*-radical} of $\A$ as
$$
\R\ha(\A):=\{x\in\A:\, \pi(x)=0,\, \mbox{for all}\, (\tau,{\sf t}_{s})\mbox{-continuous *-representations}\,\, \pi\}.
$$
We put $\R^*(\A)=\A$, if $\A[\tau]$ has no $(\tau,{\sf t}_{s})\mbox{-continuous *-representations}$.

\berem
The *-radical was defined in \cite[Sec.5]{att_2010} as
$$
\R\ha_{*}(\A):=\{x\in\A:\, \pi(x)=0,\, \mbox{for all}\, (\tau,{\sf t}_{s*})\mbox{-continuous *-representations}\,\, \pi\}.
$$
However, the two definitions are equivalent. Indeed, since every  $(\tau,{\sf t}_{s*})$-continuous *-representation is  $(\tau,{\sf t}_{s})$-continuous, we have $\R\ha_{*}(\A) \subset \R\ha(\A)$. In order to  prove that $\R\ha(\A) \subset \R_{*}\ha(\A)$, assume that
$x\not\in\R\ha_{*}(\A) $, i.e., there is a  $(\tau,{\sf t}_{s*})$-continuous *-representation $\pi$ such that $\pi(x)\neq 0$.
But $\pi$  is also  $(\tau,{\sf t}_{s})$-continuous, hence  $x\not\in\R\ha(\A) $ as well.
\enrem

The  *-radical enjoys the following  { immediate} properties:

\begin{itemize}
\item[(1)] If $x \in \R^*(\A)$, then $x\ha \in \R^*(\A)$.
\item[(2)] If $x\in \A$, $y \in \R^*(\A)$ and $x\in L(y)$, then $xy \in \R^*(\A)$.
\end{itemize}

From now on, we denote by $\rep$  {  the set} of all $(\tau, {\sf t}_s)$-continuous  *-represen\-ta\-tions  of $\A$.
It $\A$ has a multiplication core $\BB$,  we may always suppose that $\pi(x) \in \LDO{\D(\pi)}$ for every $x \in \BB$,  {  as results from the proof of Proposition \ref{regrep}}.
\begin{prop}\label{prop_finalnew}
Let $\A[\tau]$ be a topological partial *-algebra with unit $e$. Let $\BB$ be a  multiplication core. For an element $x\in \A$ the following statements are equivalent.
\begin{itemize}
  \item[(i)] $x \in \R^*(\A)$.
  \item[(ii)]  $\vp(x,x)=0$  for every $\vp \in \pppb$.
\end{itemize}
\end{prop}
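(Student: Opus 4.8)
The plan is to prove the equivalence (i)$\iff$(ii) by exploiting the correspondence between $(\tau,{\sf t}_s)$-continuous *-representations and the $\tau$-continuous ips-forms in $\pppb$, using Proposition \ref{regrep} as the crucial bridge in one direction and the GNS construction in the other.

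\medskip

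For the implication (i)$\Rightarrow$(ii), suppose $x\in\R^*(\A)$ and let $\vp\in\pppb$ be arbitrary. The key observation is that every $\vp\in\pppb$ arises from a $(\tau,{\sf t}_s)$-continuous *-representation via its GNS triple. Indeed, the GNS construction associated to the ips-form $\vp$ (with core $\BB$) yields a triple $(\pi_\vp,\lambda_\vp,\H_\vp)$, and I would first verify that $\tau$-continuity of $\vp$ forces $\pi_\vp$ to be $(\tau,{\sf t}_s)$-continuous. Concretely, for $a\in\BB$ one has $\|\pi_\vp(y)\lambda_\vp(a)\|^2=\vp(ya,ya)\leq\gamma\,p_\alpha(ya)^2$, and by ({\sf d}$_4$) the map $y\mapsto ya$ is $\tau$-continuous, so $y\mapsto\pi_\vp(y)\lambda_\vp(a)$ is continuous on the dense subspace $\lambda_\vp(\BB)$; since $\lambda_\vp(\BB)$ is dense in $\H_\vp$ by ({\sf ips}$_2$), this gives $(\tau,{\sf t}_s)$-continuity of $\pi_\vp$. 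Then $x\in\R^*(\A)$ yields $\pi_\vp(x)=0$, and using the unit $e\in\BB$ (guaranteed by ({\sf d}$_1$)) I would write $\vp(x,x)=\vp(xe,xe)=\|\pi_\vp(x)\lambda_\vp(e)\|^2=0$.

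\medskip

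For the converse (ii)$\Rightarrow$(i), I argue contrapositively. Suppose $x\notin\R^*(\A)$, so there is some $\pi\in\rep$ with $\pi(x)\neq 0$; that is, there exists $\xi\in\D(\pi)$ with $\pi(x)\xi\neq 0$. By Proposition \ref{regrep}, $\pi$ is $\BB$-regular, so the form $\vp_\pi^\xi$ defined in \eqref{eq-phipixi} is an ips-form with core $\BB$. I then need $\vp_\pi^\xi\in\pppb$, i.e.\ that it is $\tau$-continuous; this follows from the $(\tau,{\sf t}_s)$-continuity of $\pi$, since $\vp_\pi^\xi(x,y)=\ip{\pi(x)\xi}{\pi(y)\xi}$ and the seminorm estimate $|\vp_\pi^\xi(x,y)|\leq\|\pi(x)\xi\|\,\|\pi(y)\xi\|\leq p(x)p(y)$ comes from bounding each factor by a $\tau$-continuous seminorm via continuity of $\pi$ at the fixed vector $\xi$. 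Finally $\vp_\pi^\xi(x,x)=\|\pi(x)\xi\|^2\neq 0$ exhibits a form in $\pppb$ failing (ii).

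\medskip

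\textbf{The main obstacle} I anticipate is the careful verification that the GNS representation $\pi_\vp$ attached to a form $\vp\in\pppb$ is genuinely $(\tau,{\sf t}_s)$-continuous, rather than merely well-defined as a *-representation. The subtlety is that $\pi_\vp(y)$ acts on the GNS domain $\D(\pi_\vp)$ built from $\lambda_\vp(\BB)$, and one must convert the continuity of the multiplication maps $y\mapsto ya$ (property ({\sf d}$_4$)) together with the continuity estimate on $\vp$ into a uniform ${\sf t}_s$-estimate on $\pi_\vp(y_\alpha)\zeta$ for $\zeta$ in the dense set, then pass to the closure of the domain. Keeping track of which density and continuity hypotheses ({\sf ips}$_2$), ({\sf d}$_3$), ({\sf d}$_4$) are invoked at each step—and ensuring the role of the unit $e\in\BB$ is legitimate, so that $\vp(x,x)$ can be recovered as a single GNS norm—is where the argument must be handled with precision.
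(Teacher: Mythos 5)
Your proposal is correct and takes essentially the same route as the paper's own proof: for (i)$\Rightarrow$(ii) you show that the GNS representation $\pi_\vp$ of any $\vp\in\pppb$ is $(\tau,{\sf t}_s)$-continuous via the estimate $\|\pi_\vp(y)\lambda_\vp(a)\|^2=\vp(ya,ya)$ together with ({\sf d}$_4$) and then evaluate at the unit $e\in\BB$, while for (ii)$\Rightarrow$(i) you verify that the vector forms $\vp_\pi^\xi$ are $\tau$-continuous ips-forms with core $\BB$ (the paper justifies this by assuming $\pi(\BB)\subset\LDO{\D(\pi)}$ as in the proof of Proposition \ref{regrep}, which is the same content as your appeal to $\BB$-regularity), the contrapositive phrasing being immaterial. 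The only superfluous worry is your anticipated ``passage to the closure of the domain'': since $\D(\pi_\vp)=\lambda_\vp(\BB)$, the seminorm estimate at each vector $\lambda_\vp(a)$ already yields $(\tau,{\sf t}_s)$-continuity.
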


\begin{proof}(i) $\Rightarrow $ (ii): Let $\vp \in \pppb$ and $\pi_\vp$ the corresponding GNS representation. Then, for every $x \in \A$,
$$\|\pi_\vp(x)\lambda_\vp(a)\|^2 = \vp(xa,xa) =\vp_a(x,x) \leq p(x)^2, \quad a \in \BB$$
for some continuous $\tau$-seminorm $p$ (depending on $a$). Hence $\pi_\vp$ is $(\tau,{\sf t}_{s})$-continuous. If $x\in \R^*(\A)$, then $\pi_\vp(x)=0$. Thus $\vp(xa,xa)=0$, for every $a \in \BB$. From $e \in \BB$, we get the statement.

(ii) $\Rightarrow$ (i) Let $\pi\in \rep$. We assume $\pi(\BB) \subset \LDO{\D(\pi)}$.
For $x,y \in \A$ and $\xi \in \D$, put, as before,
$$ \vp_\pi^\xi(x,y):=\ip{\pi(x)\xi}{\pi(y)\xi}, \quad x,y \in \A.$$
Then,
$$ |\vp_\pi^\xi(x,y)|=|\ip{\pi(x)\xi}{\pi(y)\xi}| \leq \|\pi(x)\xi\|\|\pi(y)\xi\|\leq p(x)p(y)$$
for some $\tau$-continuous seminorm $p$. Hence, $\vp_\pi^\xi$ is continuous.

Thus, $\vp_\pi^\xi \in \pppb$ and, by the assumption, $\|\pi(x)\xi\|^2=\vp_\pi^\xi(x,x)=0$. The arbitrariness of $\xi$ implies that $\pi(x)=0$.
\end{proof}

As for topological *-algebras, the *-radical contains all elements $x$ whose \emph{square} $x\ha x$ (if well defined) vanishes.

\begin{prop} Let $\A$ be a topological \pa. Let $x \in \A$, with $x\ha \in L(x)$. If
$x\ha x=0$, then $x \in \R^*(\A)$
\end{prop}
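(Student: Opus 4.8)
The plan is to unfold the definition of the *-radical directly. By definition, $x\in\R^*(\A)$ means that $\pi(x)=0$ for every $(\tau,{\sf t}_s)$-continuous *-representation $\pi$ of $\A$, so I would fix one such $\pi$, with domain $\D(\pi)$, and show that $\pi(x)$ kills $\D(\pi)$. Note that continuity of $\pi$ plays no essential role in what follows; the argument works verbatim for an arbitrary *-representation, and the restriction to continuous ones is harmless.

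First I would push the algebraic hypotheses through $\pi$ by means of its two defining properties. Since $x\ha\in L(x)$, property (ii) gives $\pi(x\ha)\in L^{\rm w}(\pi(x))$ together with $\pi(x\ha)\mult\pi(x)=\pi(x\ha x)$; as $x\ha x=0$, this yields $\pi(x\ha)\mult\pi(x)=0$. Property (i) gives $\pi(x\ha)=\pi(x)\ad$, and since the involution of $\LDH$ is involutive we also have $\pi(x\ha)\ad=(\pi(x)\ad)\ad=\pi(x)$. With $X_1=\pi(x\ha)$ and $X_2=\pi(x)$, the defining relation \eqref{altwp} of the weak multiplication then reads
\[
\ip{\pi(x)\xi}{\pi(x\ha)\ad\eta}=\ip{\big(\pi(x\ha)\mult\pi(x)\big)\xi}{\eta}, \qquad \forall\,\xi,\eta\in\D(\pi).
\]

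Substituting $\pi(x\ha)\ad=\pi(x)$ on the left and $\pi(x\ha)\mult\pi(x)=0$ on the right collapses this identity to $\ip{\pi(x)\xi}{\pi(x)\eta}=0$ for all $\xi,\eta\in\D(\pi)$. Taking $\eta=\xi$ gives $\|\pi(x)\xi\|^2=0$, hence $\pi(x)\xi=0$ for every $\xi\in\D(\pi)$, i.e. $\pi(x)=0$. Since $\pi$ was arbitrary, $x\in\R^*(\A)$.

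I do not expect a serious obstacle here: the computation is essentially forced once the hypotheses are transported through $\pi$. The only step requiring care is the bookkeeping of the involution $\ad$, precisely the identification $\pi(x\ha)\ad=\pi(x)$, which is exactly what makes the left-hand side of \eqref{altwp} reproduce the quadratic form $\xi\mapsto\|\pi(x)\xi\|^2$ and thereby force $\pi(x)=0$. This parallels the classical fact for *-algebras that elements with vanishing square lie in the *-radical.
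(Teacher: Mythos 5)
Your proof is correct and follows essentially the same route as the paper: both transport the hypothesis through $\pi$ to get $\pi(x\ha)\mult\pi(x)=\pi(x\ha x)=0$ and then use the defining relation of the weak product to identify $\|\pi(x)\xi\|^2$ with $\ip{(\pi(x)\ad\mult\pi(x))\xi}{\xi}=0$. Your extra remarks (the explicit appeal to \eqref{altwp}, the observation $\pi(x\ha)\ad=\pi(x)$, and the fact that continuity of $\pi$ is never used) are all accurate and merely make explicit what the paper's computation leaves implicit.
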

\begin{proof} If $\pi$ is a $(\tau,{\sf t}_{s})$-continuous *-representation of $\A$, $\pi(x\ha)\mult\pi(x)=\pi(x)\ad\mult\pi(x)$ is well-defined and equals $0$. Hence, for every $\xi \in \D(\pi)$,
\begin{align*} \|\pi(x)\xi\|^2 &= \ip{\pi(x)\xi}{\pi(x)\xi}\\
&=\ip{\pi(x)\ad\mult\pi(x) \xi}{\xi}= \ip{\pi(x\ha)\mult\pi(x) \xi}{\xi}\\ &=\ip{\pi(x\ha x)\xi}{\xi} =0.
\end{align*}
Hence $\pi(x) = 0$.
\end{proof}

\berem   A sort of converse of the previous statement was stated in \cite[Proposition 5.3]{att_2010}. Unfortunately, the proof given there contains a gap.
\enrem

\bedefi A topological partial *-algebra $\A[\tau]$ is called \emph{*-semisimple} if, for every $x\in \A\setminus\{0\}$ there exists a $(\tau, {\sf t}_s)$-continuous
*-representation $\pi$ of $\A$ such that $\pi(x)\neq 0$ or, equivalently, if $\R^*(\A)=\{ 0\}$.
\findefi

By Propositition \ref{prop_finalnew}, $\A[\tau]$ is *-semisimple if, and only if, for some multiplication core $\BB$,  the family  of ips-forms  $\pppb$ is \emph{sufficient}
in the following sense \cite{att_2010}.
\bedefi
A   family $\mathcal M$ of continuous ips-forms on $\A\times\A$ is {\it sufficient}
 if $x\in \A$ and $\vp(x,x)=0$ for every $\vp \in {\mc M}$ imply $x=0$.
\findefi

The sufficiency of the family $\M$ can be described in several different ways.

\begin{lemma} \label{lem:Msuff}
Let $\A$ be a topological \pa\ with multiplication core $\BB$. Then the following statements are equivalent:
\begin{itemize}
\item[(i)] $\M$ is sufficient.
\item[(ii)]$\vp(xa,b)=0$, for every $\vp \in \M$ and $a,b \in \BB$, implies $x=0$.
\item[(iii)]$\vp(xa,a)=0$, for every $\vp \in \M$ and $a \in \BB$, implies $x=0$.
\item[(iv)]$\vp(xa,y)=0$, for every $\vp \in \M$ and $y \in \A$, $a\in \BB$, implies $x=0$.
\item[(v)]$\vp(xa,xa)=0$ for every $\vp \in \M$ and $a\in \BB$, implies $x=0$.
\end{itemize}
\end{lemma}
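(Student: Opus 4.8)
The plan is to reduce the equivalence of the five \emph{statements} to the pairwise equivalence of the five \emph{conditions on $x$} that occur as their hypotheses. Each item has the shape ``[condition on $x$] $\Rightarrow x=0$,'' so once these conditions are shown to be equivalent predicates on a fixed $x\in\A$, the five implications coincide automatically. I would therefore fix $x\in\A$ and an arbitrary $\vp\in\M$ with core $\BB$, and compare the quantities $\vp(x,x)$, $\vp(xa,b)$, $\vp(xa,a)$, $\vp(xa,y)$ and $\vp(xa,xa)$ directly.

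First I would dispatch the block (ii)--(v), which is purely formal. From $\vp(xa,xa)=0$ the Cauchy--Schwarz inequality \eqref{2.2} gives $|\vp(xa,y)|^2\leq\vp(xa,xa)\,\vp(y,y)=0$ for every $y\in\A$, hence the condition in (v) implies that of (iv); specialising $y$ to $b\in\BB$ gives (iv)$\to$(ii), and specialising $b$ to $a$ gives (ii)$\to$(iii). To close the loop I would recover (ii) from (iii) by polarisation: for fixed $x$ the map $(a,b)\mapsto\vp(xa,b)$ is sesquilinear on $\BB\times\BB$ (linearity in $a$ uses distributivity of the partial product together with $\BB\subset R\A$, conjugate-linearity in $b$ is that of $\vp$, and $\BB$ is a subspace so each $a+i^kb\in\BB$), so its vanishing on the diagonal forces it to vanish identically. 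Finally, to climb back to (v) I would invoke the density property ({\sf ips}$_2$): the condition in (ii) says $\lambda_\vp(xa)\perp\lambda_\vp(b)$ for all $b\in\BB$, and since $\lambda_\vp(\BB)$ is dense in $\H_\vp$ this forces $\lambda_\vp(xa)=0$, i.e. $\vp(xa,xa)=\|\lambda_\vp(xa)\|^2=0$. Thus the four conditions behind (ii)--(v) are mutually equivalent.

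The remaining and genuinely less formal point is to tie in condition (i), which only ``sees'' the diagonal value $\vp(x,x)$, to the block above, which involves the twisted elements $xa$. Here I would use two structural features of the setting. On the one hand, the unit $e$ lies in $\BB$ by ({\sf d}$_1$); taking $a=e$ in the condition of (v) and using $xe=x$ yields $\vp(x,x)=\vp(xe,xe)=0$, so (v)$\to$(i) at the level of conditions. On the other hand, $\M$ is stable under the operations $\vp\mapsto\vp_a$, $\vp_a(x,y):=\vp(xa,ya)$ (as recorded in the text just before the lemma for the canonical family $\pppb$): applying the condition of (i) to $\vp_a\in\M$ gives $\vp_a(x,x)=\vp(xa,xa)=0$ for every $a\in\BB$, which is exactly the condition of (v). Hence (i) and (v) are equivalent, completing the chain.

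The step I expect to be the real obstacle---and the only one needing more than bookkeeping with Cauchy--Schwarz, polarisation and density---is precisely this passage between the diagonal condition (i) and the multiplier conditions (ii)--(v). It is the unique place that forces one to use \emph{both} the unit (to descend from $xa$ to $x$) and the stability of $\M$ under $\vp\mapsto\vp_a$ (to ascend from $x$ to $xa$); for a completely arbitrary family of continuous ips-forms the equivalence with (i) could break down, so I would be careful to invoke $\M=\pppb$, or at least its $\vp\mapsto\vp_a$ stability, explicitly.
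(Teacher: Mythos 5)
The paper offers no argument to compare against here --- it states ``We omit the easy proof'' immediately after the lemma --- so your proposal must be judged on its own; judged so, it is correct and almost certainly the intended argument. Your cycle at the level of conditions ((v)$\Rightarrow$(iv) by Cauchy--Schwarz \eqref{2.2}, (iv)$\Rightarrow$(ii)$\Rightarrow$(iii) by specialization, (iii)$\Rightarrow$(ii) by complex polarization of the sesquilinear map $(a,b)\mapsto\vp(xa,b)$ on the subspace $\BB$, and (ii)$\Rightarrow$(v) by the density ({\sf ips}$_2$)) is clean, and your bridge to (i) uses exactly the two facts the paper lines up around the lemma: the stability $\vp_a\in\pppb$ recorded in the paragraph just before it, and the ``take $a=e$'' step that reappears verbatim in the proof of Proposition \ref{prop_finalnew} (``From $e\in\BB$, we get the statement'').

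Two remarks on hypotheses, both in the spirit of your own closing caveat. First, your descent (v)$\Rightarrow$(i) requires $\A$ to \emph{have} a unit: ({\sf d}$_1$) places $e$ in $\BB$ only conditionally, and the lemma as stated does not assume one, so your phrase ``the unit $e$ lies in $\BB$ by ({\sf d}$_1$)'' silently strengthens the hypotheses. The assumption is not cosmetic: take $\A$ a Hilbert space with identically zero multiplication and $\BB$ a dense subspace (all of ({\sf cl}), ({\sf d}$_1$)--({\sf d}$_5$) hold trivially); the inner product is then a continuous ips-form with core $\BB$, the singleton $\M$ is sufficient, yet the conditions in (ii)--(v) hold vacuously for every $x$, so (i) is true while (ii)--(v) all fail. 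Second, your warning about the ascent (i)$\Rightarrow$(v) is equally genuine, and a unit alone does not repair it: on $\A=\BB=M_2(\mb C)$ with the single form $\vp(x,y)=\omega(y^*x)$, $\omega(x)=x_{11}$, one checks that $\vp$ is a continuous ips-form with core $\BB$ and that statements (ii)--(v) hold, while $\M=\{\vp\}$ is not sufficient since $\vp(e_{12},e_{12})=0$ for the matrix unit $e_{12}\neq 0$. So the lemma really concerns a unital \pa\ with $e\in\BB$ and a family stable under $\vp\mapsto\vp_a$ (e.g.\ $\M=\pppb$), exactly as you anticipated; it is worth noting that the only instance the paper later invokes for a general sufficient $\M$ --- (i)$\Rightarrow$(iii), in showing $\M$-positive elements are hermitian --- needs only the unit, not the stability.
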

We omit the easy proof.

\medskip Of course, if the family $\M$ is sufficient, any larger family $\M' \supset \M$ is also sufficient. In this case, the maximal sufficient family (having $\BB$ as core) is obviously
the set $\pppb$ of \emph{all} continuous ips-forms with core $\BB$. Hence if a sufficient family $\M \subseteq \pppb$ exists, $\A[\tau]$ is *-semisimple.

\beex \label{ex_33}As mentioned before, the space $\LDH$ is a $\LBD$-regular \pa, when endowed with the strong*-topology ${\sf t}_{s^*}$.
The set of positive sesquilinear forms $\M:= \{ \vp_\xi: \xi \in \D\}$, where $\vp_\xi(X,Y)=\ip{X\xi}{Y\xi}$, $X,Y \in \LDH$, is a sufficient family of
ips-forms with core $\LBD$. Indeed, if $\vp_\xi(X,X)=0$, for every $\xi \in \D$, then $\|X\xi\|^2=0$ and therefore $X=0$.
\enex

  \beex \label{ex_34} As shown in \cite{bt_ellepi}, the space $L^p(X)$, $X=[0,1]$, endowed with its usual norm topology, is $L^\infty(X)$-regular and it is *-semisimple if  $p\geq 2$. Indeed, in this case the family of all continuous ips-forms is given by
$\M=\{ \vp_w :  w \in L^{p/(p-2)}, w \geq 0\}$, where
$$\vp_w(f,g)=\int_X f(t)\overline{g(t)}w(t)dt, \quad f,g \in L^p(X),$$ and it is sufficient.

 If $1\leq p<2$,
the set of all continuous ips-forms reduces to $\{0\}$. Hence, in this case, $\R^*(L^p(X))=L^p(X)$.
\enex

 Let $\A$ be a topological \pa\ with multiplication core $\BB$. If $\A$ possesses a sufficient family  $\mathcal M$ of ips-forms,
an \emph{extension} of the multiplication of $\A$ can be introduced in a way similar to  \cite[Sec.4]{att_2010}.

We say that the \emph{weak} multiplication $x\wmult y$ is well-defined ( {  with respect}  to $\M$) if there exists $z\in\A$ such that:
$$
\varphi(ya,x^*b)=\varphi(za,b),\;\forall\, a,b\in\BB, \forall\,\varphi\in\mathcal M.
$$
In this case, we put $x\wmult y:=z$ and the sufficiency of ${\mathcal M}$ guarantees that $z$ is unique.
  The weak multiplication $\wmult$ clearly depends on  $\mathcal M$: the larger is $\M$, the stronger is the weak multiplication, in the sense that if $\M \subseteq \M' \subseteq \pppb$ and $x\wmult y$ exists w.r. to $\M'$, then $x\wmult y$ exists with respect to $\M$ too.

A handy criterion for the existence of the weak multiplication is provided by the following
\begin{prop}\label{prop_43} Let $\BB$ be an algebra, then the weak product $x\wmult y$ is defined (with respect to $\M$) if, and only if, there exists a net $\{b_\alpha\}$ in $\BB$ such that $b_\alpha\stackrel{\tau}\longrightarrow y$
and $x b_\alpha\stackrel{\tau^{\scriptscriptstyle\M} _w}\longrightarrow z\in\A.$
\end{prop}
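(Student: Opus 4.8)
The plan is to derive both implications from a single chain of identities that relates the pairing $\varphi\big((xb_\alpha)a,b\big)$ to $\varphi(ya,x^*b)$. Throughout, fix $\varphi\in\M$ and $a,b\in\BB$, and recall the two notions being compared: by definition $x\wmult y=z$ means $\varphi(ya,x^*b)=\varphi(za,b)$ for all $\varphi\in\M$ and $a,b\in\BB$, while $\tau^{\scriptscriptstyle\M}_w$-convergence $u_\gamma\to u$ means $\varphi(u_\gamma a,b)\to\varphi(ua,b)$ for all such $\varphi,a,b$. Written this way, the two conditions are tested against exactly the same seminorms, so matching them term by term is what the proof must accomplish.

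The engine of the argument is the following computation, valid for \emph{any} net $\{b_\alpha\}\subset\BB$ with $b_\alpha\stackrel{\tau}{\longrightarrow}y$. Since $\BB$ is an algebra, Remark \ref{rem_semiass} gives $(xb_\alpha)a=x(b_\alpha a)$, and since $b_\alpha a\in\BB$ (using ({\sf d}$_2$)) I may apply ({\sf ips}$_3$) to obtain
$$\varphi\big((xb_\alpha)a,b\big)=\varphi\big(x(b_\alpha a),b\big)=\varphi(b_\alpha a,x^*b).$$
Next, property ({\sf d}$_4$) says right multiplication by $a$ is $\tau$-continuous, so $b_\alpha a\stackrel{\tau}{\longrightarrow}ya$; combining this with the $\tau$-continuity estimate $|\varphi(u,v)|\leq\gamma\,p_\alpha(u)p_\alpha(v)$ (applied in the first argument, with the second fixed at $x^*b$) yields $\varphi(b_\alpha a,x^*b)\to\varphi(ya,x^*b)$. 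Hence
$$\varphi\big((xb_\alpha)a,b\big)\longrightarrow\varphi(ya,x^*b),\qquad\forall\,\varphi\in\M,\ \forall\,a,b\in\BB.$$
This single limit relation is the hinge of both implications.

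For the implication ($\Leftarrow$), I assume such a net exists with $xb_\alpha\stackrel{\tau^{\scriptscriptstyle\M}_w}{\longrightarrow}z$. The displayed relation gives $\varphi\big((xb_\alpha)a,b\big)\to\varphi(ya,x^*b)$, whereas $\tau^{\scriptscriptstyle\M}_w$-convergence gives $\varphi\big((xb_\alpha)a,b\big)\to\varphi(za,b)$; uniqueness of limits in $\CN$ then forces $\varphi(ya,x^*b)=\varphi(za,b)$ for all $\varphi,a,b$, which is precisely $x\wmult y=z$. For ($\Rightarrow$), I assume $x\wmult y=z$. By ({\sf d}$_3$), $\BB$ is $\tau\ha$-dense, hence $\tau$-dense, so I may pick \emph{any} net $\{b_\alpha\}\subset\BB$ with $b_\alpha\stackrel{\tau}{\longrightarrow}y$; the same computation then gives $\varphi\big((xb_\alpha)a,b\big)\to\varphi(ya,x^*b)=\varphi(za,b)$, i.e.\ $xb_\alpha\stackrel{\tau^{\scriptscriptstyle\M}_w}{\longrightarrow}z$, as required.

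The continuity estimates are routine. The one step to watch is the associativity identity $(xb_\alpha)a=x(b_\alpha a)$, which is exactly where the hypothesis that $\BB$ is an algebra enters: without it Remark \ref{rem_semiass} does not apply and the chain connecting $\varphi((xb_\alpha)a,b)$ to $\varphi(ya,x^*b)$ collapses. A secondary point to verify carefully is that $\tau^{\scriptscriptstyle\M}_w$ is indeed the topology generated by the seminorms $u\mapsto|\varphi(ua,b)|$, so that its convergence statement lines up slot-for-slot with the defining relation of the weak product; once this is confirmed, the two implications are genuinely symmetric and no further work is needed.
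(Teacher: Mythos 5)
Your proof is correct: the hinge identity $\vp\bigl((xb_\alpha)a,b\bigr)=\vp\bigl(x(b_\alpha a),b\bigr)=\vp(b_\alpha a,x^*b)\to\vp(ya,x^*b)$ uses exactly the right ingredients (Remark \ref{rem_semiass} for $\BB$ an algebra, ({\sf d}$_2$), ({\sf d}$_4$), ({\sf ips}$_3$), and the $\tau$-continuity of each $\vp\in\M$), and both implications follow as you say --- noting that $xb_\alpha$ is automatically defined since $\BB\subseteq R\A$, and that uniqueness of $z$ comes from the sufficiency of $\M$. The paper itself states this proposition without proof, deferring to the analogous construction in \cite{att_2010}, and your argument is precisely the intended one, so there is nothing to add.
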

 Here $\tau^{\scriptscriptstyle\M} _w$ is the weak topology determined by $\mathcal M$, with seminorms
$x\mapsto |\vp(xa,b)|, $ $\vp \in {\mathcal M}, a,b \in \BB$.
It is easy to prove that $\A$  is also a  partial *-algebra with respect to the weak multiplication.

Since it holds in typical examples,  e.g. $\LDH$ \cite[Prop. 3.2]{att_2010},  we will often suppose that the following condition is satisfied:
\begin{itemize}
\item[{\sf (wp)}] $xy$ exists if, and only if, $x\wmult y$ exists. In this case $xy=x\wmult y$.
\end{itemize}

In this situation it is possible to define a stronger multiplication on $\A$:
we say that the {\it strong} multiplication $x\bullet y$
is well-defined (and that $x\in L^s(y)$ or $y\in R^s(x)$) if $x\in L(y)$ and:
\begin{itemize}
\item[({\sf sm}$_1$)]$\varphi((xy)a,z^*b)=\varphi(ya,(x^*z^*)b), \; \forall \,z \in L(x),\forall\,\varphi\in{\M}, \forall\, a,b\in \BB$;
\item[({\sf sm}$_2$)]$\varphi((y^*x^*)a,vb)=\varphi(x^*a,(yv)b), \; \forall\, v \in R(y),\forall\, \varphi\in{\M},\forall\, a,b\in\BB.$
\end{itemize}
The same considerations on the dependence on $\M$ of the weak multiplication apply, of course, to the strong multiplication.

\bedefi Let $\A$ be a partial *-algebra. A *-representation $\pi$ of $\A$ is called  \emph{quasi-symmetric} if, for every $x \in \A$,
\begin{align*}
& \bigcap_{z \in L(x)} D((\pi(x)^*\upharpoonright \pi(z^*)\D(\pi))^*= D(\overline {\pi(x)}) ;\\
&  \bigcap_{v \in R(x)} D((\pi(x^*)^*\upharpoonright \pi(v)\D(\pi))^* =  D(\overline {\pi(x)\ad}).
\end{align*}
\findefi
Of course, the same definition can be given for any partial O*-algebra $\MM$ (by considering the identical *-representation).
\berem The conditions given in the previous Definition are certainly satisfied if, for every $x \in \A$, there exist $s \in L(x), \,t \in R(x)$ such that $(\pi(x)^*\upharpoonright \pi(s^*)\D(\pi))^*= \overline{\pi(x)}$ and $(\pi(x^*)^*\upharpoonright \pi(t)\D(\pi))^* = \overline{\pi(x^*)}$.
These stronger conditions are  {  satisfied}, in particular,  by any
symmetric O*-algebra $\MM$ ({\em symmetric} means that $(I+X^*\overline{X})^{-1}$ is in the bounded part of $\MM$, for every $X \in \MM$). The proof given in \cite[Theorem 3.5]{att_2010}, shows that in this case $\MM$ is quasi-symmetric, whence the name. In particular, $\LDH$ is quasi-symmetric.\enrem

\begin{prop}Let $\A$ be a *-semisimple topological \pa\ with  multiplication core $\BB$ containing the unit $e$ of $\A$. If the strong product $x\bullet y$ of  $x,y\in\A$ is
 well-defined with respect to $\pppb$, then for every quasi-symmetric $\pi\in\repb$ with $\pi (\BB) \subset \LDO{\D(\pi)}$ and $\pi(e)=I_{\D(\pi)}$, the strong product $\pi(x)\circ\pi(y)$ is well-defined too and
 $$
  \pi(x\bullet y)=\pi(x)\circ\pi(y).
  $$
\end{prop}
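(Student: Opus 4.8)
The plan is to turn the abstract conditions ({\sf sm}$_1$) and ({\sf sm}$_2$) defining $x\bullet y$ into operator identities in $\LDO{\D(\pi)}$ and then to use quasi-symmetry to convert those identities into the domain inclusions that make the strong operator product $\pi(x)\circ\pi(y)$ well-defined. First I would note that, since $\pi\in\repb$ is $(\tau,{\sf t}_s)$-continuous and $\pi(\BB)\subset\LDO{\D(\pi)}$, Proposition \ref{regrep} shows $\pi$ is $\BB$-regular, so for every $\xi\in\D(\pi)$ the form $\vp_\pi^\xi(u,v)=\ip{\pi(u)\xi}{\pi(v)\xi}$ lies in $\pppb$. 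Because $x\bullet y$ is defined with respect to $\pppb$, conditions ({\sf sm}$_1$) and ({\sf sm}$_2$) hold for every $\vp=\vp_\pi^\xi$. Taking $a=b=e$ (legitimate since $e\in\BB$ and $xe=x$) collapses the core elements and turns ({\sf sm}$_1$) into $\ip{\pi(xy)\xi}{\pi(z^*)\xi}=\ip{\pi(y)\xi}{\pi(x^*z^*)\xi}$ for all $z\in L(x)$, and ({\sf sm}$_2$) into $\ip{\pi(y^*x^*)\xi}{\pi(v)\xi}=\ip{\pi(x^*)\xi}{\pi(yv)\xi}$ for all $v\in R(y)$. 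Since each side is the diagonal of a sesquilinear form in a pair of vectors, a polarization in $\xi$ promotes both to identities valid for two independent vectors $\xi,\zeta\in\D(\pi)$.

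Next I would rewrite these polarized identities using $\pi(\BB)\subset\LDO{\D(\pi)}$ and $\pi(e)=I_{\D(\pi)}$, so that the weak products reduce to genuine compositions on $\D(\pi)$, namely $\pi(x^*z^*)\zeta=\pi(x)^*\pi(z^*)\zeta$ and $\pi(yv)\zeta=\pi(y^*)^*\pi(v)\zeta$. Conjugating the first identity gives $\ip{\pi(x)^*\pi(z^*)\zeta}{\pi(y)\xi}=\ip{\pi(z^*)\zeta}{\pi(xy)\xi}$, which is verbatim the statement that $\pi(y)\xi\in D\big((\pi(x)^*\up\pi(z^*)\D(\pi))^*\big)$ with adjoint value $\pi(xy)\xi$, for every $z\in L(x)$. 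Since $\pi(x)^*\up\pi(z^*)\D(\pi)\subseteq\pi(x)^*$, the corresponding adjoint contains $\overline{\pi(x)}$, so intersecting over $z\in L(x)$ and invoking the first quasi-symmetry identity forces $\pi(y)\xi\in D(\overline{\pi(x)})$ and $\overline{\pi(x)}\pi(y)\xi=\pi(xy)\xi$. In exactly the same way the conjugate of the second polarized identity reads $\ip{\pi(y^*)^*\pi(v)\zeta}{\pi(x^*)\xi}=\ip{\pi(v)\zeta}{\pi(y^*x^*)\xi}$, i.e. $\pi(x^*)\xi\in D\big((\pi(y^*)^*\up\pi(v)\D(\pi))^*\big)$ for every $v\in R(y)$; the second quasi-symmetry identity (applied to the element $y$) then yields $\pi(x^*)\xi\in D(\overline{\pi(y)\ad})$ with $\overline{\pi(y)\ad}\,\pi(x^*)\xi=\pi(y^*x^*)\xi$.

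These two inclusions, valid for all $\xi\in\D(\pi)$, are precisely the requirements for the strong operator product to exist: $\pi(y)\D(\pi)\subseteq D(\overline{\pi(x)})$ and $\pi(x)\ad\D(\pi)\subseteq D(\overline{\pi(y)\ad})$, whence $\pi(x)\circ\pi(y)=\overline{\pi(x)}\pi(y)\up\D(\pi)=\pi(xy)=\pi(x\bullet y)$, which is the claim. The step I expect to be the main obstacle is the bookkeeping in the middle paragraph: one must correctly compute the Hilbert-space adjoint of the \emph{restricted} operator $\pi(x)^*\up\pi(z^*)\D(\pi)$, verify that the conjugated abstract identity is literally membership in its domain with the predicted value, use the restriction inclusion to identify that value with $\overline{\pi(x)}$ on the common domain, and keep straight that the two quasi-symmetry conditions are applied with the generic element taken to be $x$ in the first case and $y$ in the second. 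Once this matching is made explicit, the two halves combine to give both the existence of $\pi(x)\circ\pi(y)$ in $\LDO{\D(\pi)}$ and its value $\pi(x\bullet y)$.
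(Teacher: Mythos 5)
Your proposal is correct and follows essentially the same route as the paper's proof: you form $\vp_\pi^\xi\in\pppb$, specialize ({\sf sm}$_1$)--({\sf sm}$_2$) at $a=b=e$, polarize in $\xi$, read the resulting identities as membership of $\pi(y)\xi$ and $\pi(x^*)\xi$ in the domains of the adjoints of the restricted operators, and invoke quasi-symmetry to land in $D(\overline{\pi(x)})$ and $D(\overline{\pi(y)\ad})$. Your extra bookkeeping on the adjoint of the restriction and on the value $\overline{\pi(x)}\pi(y)\xi=\pi(xy)\xi$ (where the case $z=e$, legitimate since $\pi(e)=I_{\D(\pi)}$, settles the identification) is in fact slightly more explicit than the paper, which stops at well-definedness.
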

\begin{proof} Let $\pi\in\repb$ satisfy the required assumptions. Let $\xi \in \D(\pi)$ and define $\vp_\pi^\xi$ as in the proof of Proposition \ref{prop_finalnew}. Then $\vp_\pi^\xi \in \pppb$ and since $x\bullet y$ is well-defined (with respect to $\pppb$), we have
\begin{align*}
&\vp_\pi^\xi((xy)a,z^*b)=\vp_\pi^\xi(ya,(x^*z^*)b), \; \forall \,z \in L(x), \forall\, a,b\in \BB;\\
&\vp_\pi^\xi((y^*x^*)a,vb)=\vp_\pi^\xi(x^*a,(yv)b), \; \forall\, v \in R(y),\forall\, a,b\in\BB.
\end{align*}
 {  Equivalently,}
\begin{align*}
& \ip{(\pi(x)\mult \pi(y))\pi(a)\xi}{\pi(z^*)\pi(b)\xi}=\ip{\pi(y)\pi(a)\xi}{(\pi(x^*)\mult\pi(z^*))\pi(b)\xi},\\ &\hspace{5cm} \forall \,z \in L(x), \forall\, a,b\in \BB;\\
& \ip{(\pi(y^*)\mult\pi(x^*))\pi(a)\xi}{\pi(v)\pi(b)\xi}=\ip{\pi(x^*)\pi(a)\xi}{(\pi(y)\mult \pi(v))\pi(b)\xi},\\ &\hspace{5cm}\forall\, v \in R(y),\forall\, a,b\in\BB.
\end{align*}
By taking $a=b=e$ and using the polarization identity, one gets, for every $\xi, \eta \in \D(\pi)$,
\begin{align*}
& \ip{(\pi(x)\mult \pi(y))\xi}{\pi(z^*)\eta}=\ip{\pi(y)\xi}{(\pi(x^*)\mult\pi(z^*))\eta}, \;\forall \,z \in L(x) ;\\
& \ip{(\pi(y^*)\mult\pi(x^*))\xi}{\pi(v)\eta}=\ip{\pi(x^*)\xi}{(\pi(y)\mult \pi(v))\eta},\;\forall\, v \in R(y).
\end{align*}
From  {  these relations,} it follows that
\begin{align*}
&\pi(y): \D(\pi)\to D((\pi(x)^*\upharpoonright \pi(z^*)\D(\pi))^*,\; \forall z\in L(x), \\
&\pi(x^*): \D(\pi)\to D((\pi(y^*)^*\upharpoonright \pi(v)\D(\pi))^*,\; \forall v\in R(y).
\end{align*}
By the assumption, it follows that $\pi(y):\D(\pi) \to D(\overline{\pi(x)})$ and $ \pi(x^*):\D(\pi) \to D(\overline{\pi(y^*)})$. Thus, $\pi(x)\circ \pi(y)$ is well defined.
\end{proof}

\section{ Representable functionals versus ips-forms}
\label{sect-represfunct}

So far  {  we have used} ips-forms in order to characterize *-semisimplicity of a topological \pa. The reason lies in the fact that ips-forms allow a GNS-like construction. However, from a general point
of view it is not easy to find conditions for the existence of sufficient families of ips-forms,  {  whereas} there exist well-known criteria for the existence of continuous \emph{linear} functionals that separate points of $\A$.
However continuous linear functionals, which are positive in a certain sense, do not give rise, in general to a GNS construction.
This can be done, if they are \emph{representable} \cite{bit_reprfunct} in the sense specified below. It is then natural to consider, in more details, conditions for the representability of continuous positive
 linear functionals.

\bedefi Let $\A[\tau]$ be a topological \pa\ with multiplication core $\BB$. A continuous linear functional $\omega$ on $\A$ is \emph{$\BB$-positive }if $\omega (a^*a)\geq 0$ for every $a \in \BB$.
\findefi
The continuity of $\omega$ implies that $\omega (x)\geq 0$ for every $x$ which belongs to the $\tau$-closure $\A^+(\BB)$ of the set
$$
\BB^{(2)}=\left\{\sum_{k=1}^n x_k^* x_k, \, x_k \in \BB,\, n \in {\mb N}\right\}.
$$

In the very same way as in \cite[Theorem 3.2]{FTT} one can prove the following
\begin{thm} \label{thm_separation}
 Assume that $\A^+(\BB) \cap (-\A^+(\BB))=\{0\}$. Let $a \in \A^+(\BB)$, $a \neq 0$. Then there exists a
continuous linear functional $\omega$ on $\A$ with the properties:
\begin{itemize}
  \item[(i)] $\omega (x)\geq 0$, $\; \forall \ x \in \A^+(\BB)$;
  \item[(ii)] $\omega(a)>0$.
\end{itemize}
\end{thm}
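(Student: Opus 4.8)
The plan is to realize $\omega$ as a separating hyperplane between the point $-a$ and the cone $\A^+(\BB)$, exploiting that the latter is a closed convex cone and that pointedness forces $-a$ to lie outside it. This is exactly the geometric Hahn--Banach mechanism behind \cite[Theorem 3.2]{FTT}.

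First I would record the geometric structure of $\A^+(\BB)$. The set $\BB^{(2)}$ is stable under addition and, since $\BB$ is a subspace, under multiplication by nonnegative scalars (write $\lambda\sum_k x_k^* x_k = \sum_k(\sqrt{\lambda}\,x_k)^*(\sqrt{\lambda}\,x_k)$ with $\sqrt{\lambda}\,x_k\in\BB$); hence $\BB^{(2)}$ is a convex cone with vertex $0$, and so is its $\tau$-closure $\A^+(\BB)$. Moreover every generator $x_k^* x_k$ is self-adjoint, so $\BB^{(2)}$ consists of self-adjoint elements, and so does $\A^+(\BB)$ once the involution is $\tau$-continuous (equivalently, after passing to $\A[\tau^*]$). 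Since $a\in\A^+(\BB)$, $a\neq 0$, and $\A^+(\BB)\cap(-\A^+(\BB))=\{0\}$ by hypothesis, we cannot have $-a\in\A^+(\BB)$; thus $-a$ lies outside the $\tau$-closed convex set $\A^+(\BB)$.

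Next I would invoke the geometric Hahn--Banach theorem. As $\A[\tau]$ is a locally convex Hausdorff space, the closed convex set $\A^+(\BB)$ and the point $-a$ can be strictly separated: there is a $\tau$-continuous real-linear functional $\ell$ on $\A$ and a $\gamma\in\mathbb{R}$ with $\ell(-a)<\gamma\leq\ell(y)$ for every $y\in\A^+(\BB)$. Since $0\in\A^+(\BB)$ we get $\gamma\leq 0$, and the cone property (replacing $y$ by $ty$ and letting $t\to+\infty$) forces $\ell(y)\geq 0$ for all $y\in\A^+(\BB)$; in particular $\ell(a)\geq 0$, while $\ell(-a)<\gamma\leq 0$ yields $\ell(a)>0$. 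Thus $\ell$ already realizes the desired inequalities at the level of real functionals.

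Finally I would complexify. Setting $\omega_0(x):=\ell(x)-i\,\ell(ix)$ gives a $\tau$-continuous complex-linear functional with $\mathrm{Re}\,\omega_0=\ell$, and its hermitian part $\omega(x):=\tfrac12\bigl(\omega_0(x)+\overline{\omega_0(x^*)}\bigr)$ is again $\tau$-continuous and complex-linear, with $\omega(x^*)=\overline{\omega(x)}$. On a self-adjoint $x$ one then has $\omega(x)=\mathrm{Re}\,\omega_0(x)=\ell(x)$, which is real; hence $\omega(x)=\ell(x)\geq 0$ for $x\in\A^+(\BB)$ and $\omega(a)=\ell(a)>0$, establishing (i) and (ii). I expect the main obstacle to be precisely this last step: the raw separating functional is only real-linear and need not respect the involution, so $\omega_0(x)$ need not be real on the cone. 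The symmetrization repairs this, but to keep $\omega$ continuous one must ensure the involution is $\tau$-continuous (equivalently work in $\A[\tau^*]$, where the elements of $\A^+(\BB)$ are genuinely self-adjoint), which is the one delicate point; the remaining verifications are routine.
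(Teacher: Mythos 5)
Your proof is correct and is essentially the paper's own argument: the paper proves this theorem ``in the very same way as in \cite[Theorem 3.2]{FTT}'', namely by Hahn--Banach separation of $-a$ from the $\tau$-closed convex cone $\A^+(\BB)$ followed by hermitian complexification of the separating real functional, exactly as you do. Your closing caveat about $\tau$-continuity of the involution is well taken---it is implicit in the quasi *-algebra setting of \cite{FTT} and harmless in the paper's applications (e.g.\ Theorem \ref{thm_frimplies semis}), where $\A$ is assumed *-topological.
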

\noi {  The set $\A^+(\BB)$ will play an important role  in Theorem \ref{thm_frimplies semis}  and in the analysis of order bounded elements in Section \ref{subsec-orderbdd}.}

\bedefi \label{defn_representable}
Let $\omega$ be a linear functional on $\A$ and $\BB$ a subspace of $R\A$. We say that $\omega$ is  \emph{representable} (with respect to $\BB$) if  the following requirements are satisfied:

({\sf r}$_1$) $\omega(a^*a)\geq 0$ for all $a\in\BB$;

({\sf r}$_2$) $\omega(b^*(x^*a))=\overline{\omega(a^*(xb))},\;\forall\,\,a,b\in\BB$, $x\in\A$;

({\sf r}$_3$) $\forall x\in\A$ there exists $\gamma_x>0$ such that
$|\omega(x^*a)|\leq \gamma_x\,\omega(a^*a)^{1/2}$, for all $a \in \BB$.

\findefi
\noi In this case, one can prove that there exists a triple $(\pi^\BB_{\omega}, \lambda^\BB_{\omega}, \H^\BB_{\omega})$ such that
\begin{itemize}
  \item[(a)] $\pi^\BB_{\omega}$ is  a *-representation of $\A$ in $\H_\omega$;

  \item[(b)] $\lambda^\BB_{\omega}$ is a linear map of $\A$ into
  $\H^\BB_{\omega}$ with $\lambda^\BB_{\omega}(\BB)=\D({\pi^\BB_\omega})$ and
   $\pi^\BB_{\omega}(x)\lambda^\BB_{\omega}(a)=\lambda^\BB_{\omega}(xa)$, for every $x \in\A,\,a \in \BB$.

  \item[(c)] $\omega(b^*(xa))=\ip{\pi^\BB_{\omega}(x)\lambda^\BB_\omega(a)}{\lambda^\BB_\omega(b)}$,  for every $x \in \A$, $a,b \in \BB$.

\end{itemize}
In particular, if $\A$ has a unit $e$ and $e \in \BB$, we have:
\begin{itemize}
  \item[(a$_1$)] $\pi^\BB_{\omega}$ is a cyclic *-representation of $\A$ with cyclic vector $\xi_\omega$;

  \item[(b$_1$)] $\lambda^\BB_{\omega}$ is a linear map of $\A$ into
  $\H^\BB_{\omega}$ with $\lambda^\BB_{\omega}(\BB)=\D({\pi^\BB_\omega})$, $\xi_\omega=\lambda^\BB_{\omega}(e)$ and
    $\pi^\BB_{\omega}(x)\lambda^\BB_{\omega}(a)=\lambda^\BB_{\omega}(xa)$, for every $x \in\A,\, a\in \BB$.

  \item[(c$_1$)] $\omega(a)=\ip{\pi^\BB_{\omega}(x)\xi_\omega}{\xi_\omega}$, for every $x \in \A$.
\end{itemize}

The GNS construction then depends on the subspace $\BB$. We adopt the notation ${\mc R}(\A,\BB)$ for denoting the set of linear functionals on $\A$ which are representable with respect to the same $\BB$.
\berem
 It is worth recalling (also for fixing notations) that the Hilbert space $\H_\omega$ is defined by considering the subspace of $\BB$
    $$
    N_\omega =\{x \in \BB;\, \omega(y^*x)=0, \;\forall\, y \in \BB\}.
    $$
 The quotient  $\BB/N_\omega\equiv\{\lambda_\omega^0(x):=x+N_\omega; x \in \BB\}$ is a pre-Hilbert space with inner product
    $$
     \ip{\lambda_\omega^0(x)}{\lambda_\omega^0(y)}= \omega(y^*x), \quad x,y \in \BB.
    $$
 Then $\H_\omega$ is the completion of
    $\lambda_\omega^0(\BB)$. The representability of $\omega$ implies that $\lambda_\omega^0: \BB\to  \H_\omega$ extends to a linear map $\lambda_\omega:\A \to\H_\omega$.
\enrem

\berem  We notice that if $\pi$ is a *-representation of $\A$ on the domain $\D(\pi)$, and $\BB$ is a subspace of $R\A$ such that $\pi(\BB) \subset \LDO{\D(\pi)}$, then, for every $\xi \in \D(\pi)$,
the linear functional
$\omega_\pi^\xi$ defined by $\omega_\pi^\xi (x)= \ip{\pi(x)\xi}{\xi}$ is representable, whereas the corresponding sesquilinear form $\vp_\pi^\xi (x,y)= \ip{\pi(x)\xi}{\pi(y)\xi}$
is not necessarily an ips-form; the latter fact leads to the notion of regular representation discussed above.
\enrem

\beex \label{ex_contnonrepr}A continuous linear functional $\omega$ whose restriction to $\BB$ is positive need not be representable. As an example, consider $\A=L^1(I)$, $I$ a bounded interval
on the real line, and $\BB= L^\infty (I)$. The linear functional
$$ \omega(f)= \int_I f(t)dt , \quad f \in L^1(I)$$ is continuous, but it is not representable, since ({\sf r}$_3$) fails if $f \in L^1(I)\setminus L^2(I)$.
\enex

Since  multiplication cores  play  {  an important role} for   topological \pa s, we restrict our attention to the case where $\BB$ is a multiplication core
and we omit explicit reference to $\BB$ whenever it appears.
We will denote by  ${\mc R}_c(\A,\BB)$ the set of $\tau$-continuous linear functionals that are representable (with respect to $\BB$).

Since both representable functionals and ips-forms define GNS-like representations it is natural to consider the interplay of these two notions, with particular reference to the topological case.
We refer also to \cite{bit_reprfunct, FTT} for more details.

\begin{prop}\label{3.3} Let $\A$ be a topological \pa\ with  multiplication core $\BB$ which is an algebra.
If $\vp$ is a $\tau$-continuous ips-form on $\A$ then, for every $b \in \BB$, the linear functional $\omega_\vp^b$ defined by
$$ \omega_\vp^b(x)= \vp(xb,b), \quad x\in \A$$
is representable and the corresponding map $a \in \BB \mapsto \lambda_{\omega_\vp^b}^0(a)\in \H_{\omega_\vp^b}$ is continuous.

Conversely, assume that $\A$ has a unit $e\in \BB$. Then, if $\omega$ is a representable linear functional on $\A$ and  the map $a \in \BB \mapsto \lambda_\omega^0(a)\in \H_\omega$ is continuous,
the  positive sesquilinear form $\vp_\omega$ defined on $\BB\times \BB$ by
$$ \vp_\omega(a,b):= \omega (b^*a),$$ is $\tau$-continuous on $\BB\times \BB$ and it extends to a continuous ips-form $\widetilde{\vp}_\omega$ on $\A$.
\end{prop}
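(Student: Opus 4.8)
The plan is to handle the two implications separately, in each case reducing values of the functionals to values of $\vp$ (resp.\ to inner products in the GNS space) and then reading off the required identities; throughout, the algebra structure of $\BB$ enters via the right-module identity $(Xa)c=X(ac)$, valid for $X\in\A$, $a,c\in\BB$ (Remark~\ref{rem_semiass}), the core identity ({\sf d}$_5$), and invariance ({\sf ips}$_3$). \emph{Forward direction.} Fixing $b\in\BB$ and writing $\omega:=\omega_\vp^b$, I would verify ({\sf r}$_1$)--({\sf r}$_3$) directly. For ({\sf r}$_1$): $(a^*a)b=a^*(ab)$ with $ab\in\BB$, so by ({\sf ips}$_3$) we get $\omega(a^*a)=\vp(a^*(ab),b)=\vp(ab,ab)\ge 0$. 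For ({\sf r}$_3$): likewise $\omega(x^*a)=\vp(x^*(ab),b)=\vp(ab,xb)$, and Cauchy--Schwarz \eqref{2.2} gives $|\omega(x^*a)|\le\vp(ab,ab)^{1/2}\vp(xb,xb)^{1/2}=\omega(a^*a)^{1/2}\gamma_x$ with $\gamma_x=\vp(xb,xb)^{1/2}$. For ({\sf r}$_2$) I would rewrite both sides with ({\sf d}$_5$) and the module identity and apply ({\sf ips}$_3$) once more, checking that $\omega(c^*(x^*a))$ and $\overline{\omega(a^*(xc))}$ both collapse to $\vp(x^*(ab),cb)$. Continuity of $a\mapsto\lambda_\omega^0(a)$ is then immediate: $\|\lambda_\omega^0(a)\|^2=\omega(a^*a)=\vp(ab,ab)\le\gamma\,p_\alpha(ab)^2$, while $a\mapsto ab$ is $\tau$-continuous by ({\sf d}$_4$).

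\emph{Converse.} Put $\vp_\omega(a,b)=\omega(b^*a)$ on $\BB\times\BB$. Positivity is ({\sf r}$_1$), and the Hermitian law $\vp_\omega(a,b)=\overline{\vp_\omega(b,a)}$ is ({\sf r}$_2$) at $x=e$ (using $ea=a$, $e^*=e$). Since $\vp_\omega(a,b)=\ip{\lambda_\omega^0(a)}{\lambda_\omega^0(b)}$, the assumed continuity of $\lambda_\omega^0$ yields $|\vp_\omega(a,b)|\le C^2 p_\alpha(a)p_\alpha(b)$, i.e.\ $\tau$-continuity on $\BB\times\BB$. The decisive step is to show that the GNS map $\lambda_\omega:\A\to\H_\omega$ supplied by representability (with $\lambda_\omega\up\BB=\lambda_\omega^0$) is itself $\tau$-continuous. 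For $x\in\A$ I would choose, by ({\sf d}$_3$), a net $x_\mu\in\BB$ with $x_\mu\to x$ in $\tau^*$; then $\{\lambda_\omega^0(x_\mu)\}$ is norm-Cauchy by the bound just found, and its limit is exactly $\lambda_\omega(x)$, as seen by testing against $\lambda_\omega^0(\BB)$ through $\ip{\lambda_\omega(x)}{\lambda_\omega^0(b)}=\omega(b^*x)=\overline{\omega(x^*b)}$ and using $x_\mu^*b\to x^*b$ (from ({\sf d}$_4$)) together with $\tau$-continuity of $\omega$. Passing to the limit in $\|\lambda_\omega^0(x_\mu)\|\le C p_\alpha(x_\mu)$ then gives $\|\lambda_\omega(x)\|\le C p_\alpha(x)$. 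I define $\widetilde{\vp}_\omega(x,y):=\ip{\lambda_\omega(x)}{\lambda_\omega(y)}$, a $\tau$-continuous positive sesquilinear form on $\A\times\A$ extending $\vp_\omega$.

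It remains to check that $\widetilde{\vp}_\omega$ is an ips-form with core $\BB$. Condition ({\sf ips}$_1$) holds since $\BB\subseteq R\A$, and ({\sf ips}$_2$) is then automatic by the observation recorded just before the definition of $\pppb$, that a $\tau$-continuous form satisfying ({\sf ips}$_1$), ({\sf ips}$_3$), ({\sf ips}$_4$) has $\lambda(\BB)$ dense. For ({\sf ips}$_3$) I use property (b) of the representable-functional GNS triple, $\lambda_\omega(xa)=\pi_\omega(x)\lambda_\omega(a)$, together with $\pi_\omega(x)\ad=\pi_\omega(x^*)$, to obtain $\widetilde{\vp}_\omega(xa,b)=\ip{\pi_\omega(x)\lambda_\omega(a)}{\lambda_\omega(b)}=\ip{\lambda_\omega(a)}{\pi_\omega(x^*)\lambda_\omega(b)}=\widetilde{\vp}_\omega(a,x^*b)$. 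For ({\sf ips}$_4$), when $x\in L(y)$ one has $\pi_\omega(x)\in L^{\rm w}(\pi_\omega(y))$ with $\pi_\omega(x)\mult\pi_\omega(y)=\pi_\omega(xy)$; writing $\widetilde{\vp}_\omega(x^*a,yb)=\ip{\pi_\omega(x)\ad\lambda_\omega(a)}{\pi_\omega(y)\lambda_\omega(b)}$ and applying the weak-product identity \eqref{altwp} to transfer $\pi_\omega(x)$ across the inner product gives $\ip{\lambda_\omega(a)}{\pi_\omega(xy)\lambda_\omega(b)}=\widetilde{\vp}_\omega(a,(xy)b)$.

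\emph{Main obstacle.} The only genuinely non-formal point is the $\tau$-continuity of $\lambda_\omega$ on all of $\A$: condition ({\sf r}$_3$) supplies only a bound $|\omega(x^*a)|\le\gamma_x\,\omega(a^*a)^{1/2}$ whose constant $\gamma_x$ carries no continuity in $x$, so it does not give continuity by itself. The density/approximation argument above is what transports the bound $\|\lambda_\omega^0(a)\|\le C p_\alpha(a)$ from $\BB$ to $\A$, combining continuity of $\lambda_\omega^0$ on $\BB$ with $\tau$-continuity of $\omega$ through the relation $\omega(b^*x)=\overline{\omega(x^*b)}$ and ({\sf d}$_4$). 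Once $\lambda_\omega$ is continuous, the identification $\widetilde{\vp}_\omega=\ip{\lambda_\omega\,\cdot\,}{\lambda_\omega\,\cdot\,}$ makes ({\sf ips}$_3$)--({\sf ips}$_4$) fall out of the representation identities.
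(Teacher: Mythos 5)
Your proposal is correct and, for the converse (the only part the paper actually proves), follows essentially the same route as the paper: the bound $|\vp_\omega(a,b)|=|\ip{\lambda_\omega^0(a)}{\lambda_\omega^0(b)}|\leq p(a)p(b)$ on $\BB\times\BB$, extension to $\A\times\A$ by $\tau^\ast$-density (your identification of the extension as $\ip{\lambda_\omega(\cdot)}{\lambda_\omega(\cdot)}$ after establishing $\tau$-continuity of $\lambda_\omega$ is just a more explicit packaging of the step the paper performs implicitly when it checks ({\sf ips}$_4$)), and verification of ({\sf ips}$_2$)--({\sf ips}$_4$) through the GNS identities $\lambda_\omega(xa)=\pi_\omega(x)\lambda_\omega(a)$ and $\pi_\omega(x)\mult\pi_\omega(y)=\pi_\omega(xy)$. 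Since the paper states ``we prove only the second part,'' your explicit verification of ({\sf r}$_1$)--({\sf r}$_3$) for $\omega_\vp^b$ via the $\BB$-module identity of Remark~\ref{rem_semiass}, ({\sf d}$_5$) and ({\sf ips}$_3$), together with the continuity of $a\mapsto\lambda_{\omega_\vp^b}^0(a)$ from $\|\lambda_{\omega_\vp^b}^0(a)\|^2=\vp(ab,ab)$ and ({\sf d}$_4$), correctly supplies the omitted forward direction as well.
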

\begin{proof} We prove only the second part of the statement.

For every $a,b\in \BB$ we have
\begin{eqnarray*}
 |\vp_\omega (a,b)| &=& |\omega (b^*a)|=|\ip{\pi_\omega(a)\lambda_\omega^0(e)}{\pi_\omega(b)\lambda_\omega^0(e)}|\\
&\leq& \|\pi_\omega(a)\lambda_\omega^0(e)\| \|\pi_\omega(b)\lambda_\omega^0(e) \| = \|\lambda_\omega^0(a)\| \|\lambda_\omega^0(b)\|\leq p(a)p(b)
 \end{eqnarray*}
for some continuous seminorm $p$.

Hence $\vp_\omega$ extends uniquely to $\A \times \A$. Let $\widetilde{\vp}_\omega$ denote this extension. It is easily seen that $\widetilde{\vp}_\omega$ is a positive sesquilinear form on $\A \times \A$ and
$$
 |\widetilde{\vp}_\omega(x,y)| \leq p(x) p(y), \quad \forall x,y \in \A.
$$
Hence the map $x \mapsto \lambda_{\widetilde{\vp}_\omega}(x)\in \H_{\widetilde{\vp}_\omega}$ is also continuous, since
$$
 \|\lambda_{\widetilde{\vp}_\omega}(x)\|^2= \widetilde{\vp}_\omega(x,x)\leq p(x)^2, \forall x \in \A.
 $$
Thus, if $x=\tau-\lim_\alpha b_\alpha$, $b_\alpha \in \BB$, we get
$$
\|\lambda_{\widetilde{\vp}_\omega}(x) - \lambda_{\widetilde{\vp}_\omega}(b_\alpha)\|^2= \widetilde{\vp}_\omega(x-b_\alpha, x-b_\alpha)\leq p(x-b_\alpha)^2\to 0.
$$
The conditions ({\sf ips}$_3$) and ({\sf ips}$_4$) are readily checked.   {  Concerning ({\sf ips}$_4$), for instance, } let $x \in L(y)$ and $a,b \in \BB$. Then,
\begin{align*}
 \widetilde{\vp}_\omega(a,(xy)b) &= \omega(((x y)b)\ha a)=\omega(b\ha(xy)\ha a) \\
   &=\ip{\pi_\omega(xy)\ad \lambda_\omega^0(a)}{\lambda_\omega^0(b)}\\
   &= \ip{(\pi_\omega(y)\ad \mult\pi_\omega(x)\ad) \lambda_\omega^0(a)}{\lambda_\omega^0(b)} \\
   &= \ip{ \pi_\omega(x)\ad \lambda_\omega^0(a)}{\pi_\omega(y)\lambda_\omega^0(b)}\\
   &=\widetilde{\vp}_\omega(x^*a,yb).
\end{align*}
\vspace*{-12mm}

\end{proof}

\berem If $\omega$ is a representable linear functional on $\A$ and the map $a \in \BB \mapsto \lambda_\omega^0(a)\in \H_\omega$ is continuous, then $\omega$ is continuous. The converse is false in general.

\enrem

 However, the continuity of $\omega$  implies  the $\tau^*$-closability of the map $\lambda_\omega^0: a \in \BB \mapsto \lambda_\omega^0(a)\in \H_\omega$ as the next proposition shows.
\begin{prop}
\label{prop:closable}
Let $\omega$ be continuous and $\BB$-positive. Then the map $\lambda_\omega^0: a \in \BB \mapsto \lambda_\omega^0(a)\in \H_\omega$ is $\tau^*$-closable.
\end{prop}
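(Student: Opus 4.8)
The plan is to unwind what $\tau\ha$-closability means for the densely defined map $\lambda_\omega^0\colon\BB\to\H_\omega$ and reduce it to a single limit. Since $\H_\omega$ is by construction the completion of $\lambda_\omega^0(\BB)$, closability amounts to the implication: if $\{a_\alpha\}\subset\BB$ satisfies $a_\alpha\stackrel{\tau\ha}{\to}0$ and $\lambda_\omega^0(a_\alpha)\to\eta$ in $\H_\omega$, then $\eta=0$. Because $\lambda_\omega^0(\BB)$ is dense, it suffices to show $\ip{\eta}{\lambda_\omega^0(b)}=0$ for every $b\in\BB$; by continuity of the inner product and the definition of the form, $\ip{\eta}{\lambda_\omega^0(b)}=\lim_\alpha\ip{\lambda_\omega^0(a_\alpha)}{\lambda_\omega^0(b)}=\lim_\alpha\omega(b^*a_\alpha)$. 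Thus the whole statement reduces to proving $\omega(b^*a_\alpha)\to0$ for each fixed $b$.

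First I would observe that $\BB$-positivity alone forces the form $(x,y)\mapsto\omega(y^*x)$ on $\BB\times\BB$ to be Hermitian (this is also what makes the pre-Hilbert construction of $\H_\omega$ legitimate). Indeed, for $x,y\in\BB$ and $\mu\in\CN$ the element $(x+\mu y)^*(x+\mu y)$ is well defined, since $\BB\subset R\A$, and expands distributively; the inequality $\omega\big((x+\mu y)^*(x+\mu y)\big)\geq0$ forces $\mu\,\omega(x^*y)+\bar\mu\,\omega(y^*x)$ to be real for all $\mu$, and taking $\mu=1$ and $\mu=i$ yields $\omega(y^*x)=\overline{\omega(x^*y)}$. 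Applying this with $x=a_\alpha$ and $y=b$ gives the key identity $\omega(b^*a_\alpha)=\overline{\omega(a_\alpha^*b)}$.

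Next comes the point where the involution built into $\tau\ha$ is used decisively. The hypothesis $a_\alpha\stackrel{\tau\ha}{\to}0$ means $\max\{p(a_\alpha),p(a_\alpha^*)\}\to0$ for every seminorm $p$ of the defining family, so in particular $a_\alpha^*\stackrel{\tau}{\to}0$. By property ({\sf d}$_4$) of the multiplication core, the right multiplication $x\mapsto xb$ is $\tau$-continuous, whence $a_\alpha^*b\stackrel{\tau}{\to}0$; the $\tau$-continuity of $\omega$ then gives $\omega(a_\alpha^*b)\to0$, and therefore $\omega(b^*a_\alpha)=\overline{\omega(a_\alpha^*b)}\to0$. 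Together with the reduction above this yields $\ip{\eta}{\lambda_\omega^0(b)}=0$ for all $b\in\BB$, hence $\eta=0$.

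The main obstacle, and the reason the statement concerns $\tau\ha$-closability rather than $\tau$-closability, is that the quantity to be controlled, $\omega(b^*a_\alpha)$, involves a left multiplication by $b^*$, for which a multiplication core supplies no continuity. The device that circumvents this is the combination of the Hermitian identity with the involution law $(b^*a_\alpha)^*=a_\alpha^*b$, which trades the uncontrolled left multiplication for the right multiplication by $b$ governed by ({\sf d}$_4$); this is precisely why one needs $a_\alpha^*\stackrel{\tau}{\to}0$, i.e. $\tau\ha$-convergence. The only routine care required is to check that the products $(x+\mu y)^*(x+\mu y)$ lie in the domain of $\omega$ and expand distributively, which is guaranteed by $\BB\subset R\A$ together with ({\sf d}$_2$).
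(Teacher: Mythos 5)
Your proof is correct and follows essentially the same route as the paper's: reduce $\tau\ha$-closability to showing $\omega(a_\alpha^* b)\to 0$ for each fixed $b\in\BB$, which follows from $a_\alpha^*\stackrel{\tau}{\to}0$, the $\tau$-continuity of right multiplication by $b$ (condition ({\sf d}$_4$)) and the $\tau$-continuity of $\omega$. The only (cosmetic) difference is that the paper pairs the limit against $\lambda_\omega^0(b)$ in the first slot, where the defining identity $\ip{\lambda_\omega^0(b)}{\lambda_\omega^0(a_\alpha)}=\omega(a_\alpha^* b)$ applies directly, so your Hermiticity lemma (itself correct, a standard consequence of $\BB$-positivity) is bypassed.
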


\begin{proof} Let $a_\delta\stackrel{\tau^*}{\to} 0$, $a_\delta \in \BB$, and suppose that the net  $\{\lambda_\omega^0(a_\delta)\}$
is Cauchy in $\H_\omega$.
 Hence it converges to some $\xi \in
\Hil_\omega$ and
$$
 \ip{\lambda_\omega^0(b)}{\lambda_\omega^0(a_\delta)} \to \ip{\lambda_\omega^0(b)}{\xi}, \quad \forall \ b \in \BB.
 $$
  Moreover,
$$
\ip{\lambda_\omega^0(b)}{\lambda_\omega^0(a_\delta)} = \omega(a_\delta^* b)\to 0, \quad \forall \ b \in \BB,
$$
since $a_\delta^*\stackrel{\tau}{\to}{0}$ and  the right multiplication by $b \in \BB$ and $\omega$ are   { both} $\tau$-continuous.
Thus $\ip{\lambda_\omega^0(b)}{\xi}=0$, for every $b \in \BB$. This implies that $\xi =0$  { and, therefore, } $\lambda_\omega^0(a_\delta)\to 0.$
\end{proof}

Actually, it is easy to see that the closability of the map $\lambda_\omega^0$ is equivalent to the closability of $\vp_\omega$. Indeed, closability of the map $a \in \BB \mapsto \lambda_\omega^0(a)\in \H_\omega$
means that if $a_\delta\stackrel{\tau^*}{\to} 0$ and $\{\lambda_\omega^0 (a_\delta)\}$ is a Cauchy net, then $\lambda_\omega^0 (a_\delta)\to 0$. But $\{\lambda_\omega^0 (a_\delta)\}$
 is a Cauchy net if and only if $\vp_\omega(a_\delta -a_\gamma,a_\delta -a_\gamma)\to 0$. This leads to the conclusion $\|\lambda_\omega^0 (a_\delta)\|^2=\vp_\omega(a_\delta,a_\delta)\to 0$.

Therefore, Proposition \ref{prop:closable}  generalizes \cite[Prop. 2.7]{FTT},  which  says  {  that, for a locally convex quasi *-algebra $(\A, \Ao)$,  the sesquilinear} form $\vp_\omega$
is closable if $\omega \in {\mc R}_c(\A,\Ao)$.

Thus, if $\omega$ is continuous and $\BB$-positive, the map $\lambda_\omega^0$ has a closure $
\overline{\lambda_\omega^0}$ defined on
$$
D(\overline{\lambda_\omega^0})=\{ x \in \A: \exists \{a_\delta\}\subset \BB, a_\delta \stackrel{\tau^*}{\to}x, \, \mbox{$\{\lambda_\omega^0 (a_\delta)\}$ is a Cauchy net}\}.
$$
From the  discussion above, it follows that $D(\overline{\lambda_\omega^0})$ coincides with the domain $D(\overline{\vp_\omega})$ of the closure of
$\vp_\omega$.

\medskip  For the case of a locally convex quasi *-algebra $(\A,\Ao)$, the following assumption was made in \cite{FTT}:
\begin{itemize}
\item[(\sf fr)] $\displaystyle \bigcap_{\omega \in {\mc R}_c(\A,\BB)} D(\overline{\vp_\omega})=\A$.
\end{itemize}
Quasi \mbox{*-algebras} verifying the condition {(\sf fr)} are called \emph{fully representable} (hence the acronym).  Some concrete examples   have been described in \cite{FTT}
and several interesting structure properties have been derived. We maintain the same definition and the same name in the case of  {  topological \pa s
and, in complete analogy,} we say that a topological \pa\ $\A[\tau]$, with multiplication core $\BB$ is {\em fully representable} if
\begin{itemize}
\item[(\sf fr)] $D(\overline{\vp_\omega})=\A$, for every  $\omega\in \rcab$.
\end{itemize}

Then we have the following generalization of Proposition 3.6 of \cite{FTT}.
\begin{prop}\label{prop: 3.10} Let $\A$ be a  { semi-associative} *-topological \pa\ with  multiplication core $\BB$. Assume that $\A$ is fully representable and let $\omega\in \rcab$.
Then,    $\overline{\vp_\omega}$ is an ips-form on $\A$ with core $\BB$, with the property
$$ \overline{\vp_\omega}(xa,b)=\omega (b^*xa), \quad \forall x \in \A,\, a,b \in \BB.$$

\end{prop}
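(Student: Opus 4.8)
The plan is to read everything off from the GNS data $(\pi_\omega,\lambda_\omega^0,\H_\omega)$ attached to $\omega\in\rcab$, together with the identifications established just before the definition of full representability. Since $\omega$ is continuous and $\BB$-positive, $\lambda_\omega^0$ is $\tau^*$-closable (Proposition \ref{prop:closable}), one has $D(\overline{\vp_\omega})=D(\overline{\lambda_\omega^0})$, the form is recovered as $\overline{\vp_\omega}(x,y)=\ip{\overline{\lambda_\omega^0}(x)}{\overline{\lambda_\omega^0}(y)}$ in the Hilbert space $\H_{\overline{\vp_\omega}}=\H_\omega$, and $\lambda_{\overline{\vp_\omega}}(b)=\lambda_\omega^0(b)$ for $b\in\BB$. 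Full representability supplies $D(\overline{\lambda_\omega^0})=\A$, so $\overline{\vp_\omega}$ is a positive sesquilinear form on all of $\A\times\A$; this is what makes the statement meaningful in the first place.

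The crux, and the step I expect to be the main obstacle, is the identity
\[ \overline{\lambda_\omega^0}(xa)=\pi_\omega(x)\lambda_\omega^0(a),\qquad x\in\A,\ a\in\BB. \]
To prove it I use that $xa\in\A=D(\overline{\lambda_\omega^0})$, so there is a net $c_\delta\in\BB$ with $c_\delta\stackrel{\tau^*}{\to}xa$ and $\{\lambda_\omega^0(c_\delta)\}$ Cauchy; then for each $b\in\BB$,
\[ \ip{\overline{\lambda_\omega^0}(xa)}{\lambda_\omega^0(b)}=\lim_\delta\ip{\lambda_\omega^0(c_\delta)}{\lambda_\omega^0(b)}=\lim_\delta\omega(b^*c_\delta). \]
Writing $b^*c_\delta=(c_\delta^*b)^*$, which is legitimate since $\BB\subset R\A$ forces $b^*\in L\A$, and combining the $\tau$-continuity of right multiplication by $b$ (({\sf d}$_4$)), of the involution (as $\A$ is $*$-topological) and of $\omega$, I get $\omega(b^*c_\delta)\to\omega(b^*(xa))$. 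Property (c) of the GNS construction identifies this limit with $\ip{\pi_\omega(x)\lambda_\omega^0(a)}{\lambda_\omega^0(b)}$, and the density of $\lambda_\omega^0(\BB)$ in $\H_\omega$ yields the identity. Note that this argument deliberately avoids any continuity assumption on $\pi_\omega$ itself.

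Granting this, the announced formula is immediate: for $x\in\A$, $a,b\in\BB$,
\[ \overline{\vp_\omega}(xa,b)=\ip{\overline{\lambda_\omega^0}(xa)}{\lambda_\omega^0(b)}=\ip{\pi_\omega(x)\lambda_\omega^0(a)}{\lambda_\omega^0(b)}=\omega(b^*(xa))=\omega(b^*xa), \]
the last equality by ({\sf d}$_5$). I then verify the four ips-axioms with core $\BB$. Condition ({\sf ips}$_1$) is just $\BB\subset R\A$, and ({\sf ips}$_2$) holds because $\lambda_{\overline{\vp_\omega}}(\BB)=\lambda_\omega^0(\BB)$ is dense in $\H_\omega=\H_{\overline{\vp_\omega}}$. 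For ({\sf ips}$_3$) the formula and conjugate symmetry give $\overline{\vp_\omega}(xa,b)=\omega(b^*xa)$ and $\overline{\vp_\omega}(a,x^*b)=\overline{\omega(a^*x^*b)}$, which coincide by ({\sf r}$_2$) applied with $x$ replaced by $x^*$. For ({\sf ips}$_4$), applying the closure identity to both arguments gives
\[ \overline{\vp_\omega}(x^*a,yb)=\ip{\pi_\omega(x)\ad\lambda_\omega^0(a)}{\pi_\omega(y)\lambda_\omega^0(b)},\qquad \overline{\vp_\omega}(a,(xy)b)=\ip{\lambda_\omega^0(a)}{\pi_\omega(xy)\lambda_\omega^0(b)}, \]
and I equate them as follows: since $\pi_\omega$ is a $*$-representation and $x\in L(y)$, one has $\pi_\omega(x)\in L^{\rm w}(\pi_\omega(y))$ with $\pi_\omega(x)\mult\pi_\omega(y)=\pi_\omega(xy)$, so evaluating the defining relation \eqref{altwp} of the weak product at $\xi=\lambda_\omega^0(b)$ and $\eta=\lambda_\omega^0(a)\in\D(\pi_\omega)$ and conjugating produces exactly the required equality. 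The only genuinely delicate point is the closure identity of the second paragraph; everything else is bookkeeping with the representation axioms and condition ({\sf r}$_2$).
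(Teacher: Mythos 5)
Your proof is correct, but it takes a genuinely different route from the paper's. The paper localizes full representability: for each $a\in\BB$ it introduces the auxiliary functional $\omega_a(x)=\omega(a^*xa)$, verifies --- and this is where semi-associativity and the continuity of the involution are actually consumed --- that $\omega_a\in\rcab$, and then applies ({\sf fr}) to each $\omega_a$ to produce, for a given $x\in\A$, a net $\{x_\alpha\}\subset\BB$ with $x_\alpha\stackrel{\tau^*}{\to}x$ and $\{\lambda_\omega^0(x_\alpha a)\}$ Cauchy; the formula $\overline{\vp_\omega}(xa,b)=\omega(b^*(xa))$ then comes out as a limit, and ({\sf ips}$_4$) requires a double-net limit over approximants of $x^*$ and of $y$. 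You instead apply ({\sf fr}) directly to $\omega$ itself at the elements $xa$, $x^*a$, $yb$, $(xy)b$, and condense everything into the single closure identity $\overline{\lambda_\omega^0}(xa)=\pi_\omega(x)\lambda_\omega^0(a)$, proved by testing against the dense subspace $\lambda_\omega^0(\BB)$ via GNS property (c) together with the continuity of $\omega$, of the involution, and of right multiplication by elements of $\BB$; after that, the displayed formula, ({\sf ips}$_3$) via ({\sf r}$_2$), and ({\sf ips}$_4$) via the weak-product relation \eqref{altwp} evaluated at GNS vectors are pure bookkeeping, with no approximating nets at all. Your route is shorter and in fact slightly more general: semi-associativity never enters your argument (only the core axioms ({\sf d}$_4$), ({\sf d}$_5$) and the fact that $\BB\subset R\A$ forces $b^*\in L\A$ do), whereas the paper needs it precisely to make $\omega_a$ representable. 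You also handle ({\sf ips}$_2$) directly through the identification $\H_{\overline{\vp_\omega}}\cong\H_\omega$ with $\lambda_{\overline{\vp_\omega}}\up\BB=\lambda_\omega^0$, a step the paper delegates to \cite[Proposition 3.6]{FTT}; that identification deserves one explicit line in your write-up (namely $\ker\overline{\lambda_\omega^0}=N_{\overline{\vp_\omega}}$, so $\A/N_{\overline{\vp_\omega}}$ embeds isometrically into $\H_\omega$ with image containing the dense subspace $\lambda_\omega^0(\BB)$), but it is routine. The one thing the paper's detour buys that yours does not is the independently useful byproduct that every $\omega_a$, $a\in\BB$, is again a continuous representable functional.
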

\begin{proof}
The continuity of the involution implies that, for every $a\in \BB$, the map $x\mapsto a\ha x$ is continuous on $\A$. Hence the linear functional $\omega_a$ defined by $\omega(a\ha x a)$ is continuous.
We now prove that $\omega_a$ is representable; for this  we need to check properties ({\sf r}$_1$), ({\sf r}$_2$) and ({\sf r}$_3$). We have
$$
\omega_a (b\ha b)= \omega (a\ha(b\ha b) a) = \omega ((a\ha b\ha)(ba) )\geq 0, \quad \forall b \in \BB,
$$
i.e., ({\sf r}$_1$) holds.
Furthermore, for every $b,c\in \BB$, we have
\begin{align*}
 \omega_a(c\ha (xb))&=\omega(a\ha (c\ha (xb)) a)= \omega (a\ha ((c\ha x)b)) a)\\&= \overline{\omega (a\ha (b\ha (x\ha c)) a)}=\overline{\omega_a(b\ha (x\ha c))}.
\end{align*}
As for ({\sf r}$_3$), for every $x \in \A$ and $b\in \BB$, we have
\begin{align*}
 |\omega_a(x\ha b)|&=|\omega(a\ha(x\ha b)a)|=|\omega((a\ha x\ha) (ba))| \\ &\leq \gamma_{x,a}\omega (a\ha(b\ha b)a)^{1/2}=\gamma_{x,a}\omega_a(b\ha b)^{1/2}.
\end{align*}
Thus $\omega_a \in \rcab$, for every $a \in \BB$. By Proposition \ref{prop:closable}, $\overline{\vp_{\omega_a}}$ is well-defined and, by the assumption, $D(\overline{\vp_{\omega_a}})=\A$.
 Hence, if $x \in \A$, there exists a net $\{x_\alpha\}\subset \BB$ such that $x_\alpha \stackrel{\tau\ha} \to x$ and
{  $\vp_{\omega_a}(x_\alpha - x_\beta, x_\alpha - x_\beta) \to 0$ or,
equivalently, \mbox{$\vp_\omega ((x_\alpha - x_\beta)a, (x_\alpha - x_\beta)a)$} $ \to 0$}. Hence, by the definition of closure, for every $b \in \A$,
$$
\overline{\vp_\omega} (xa,b)=\lim_\alpha \vp_\omega(x_\alpha a, b)= \lim_\alpha \omega(b\ha(x_\alpha a))= \omega(b\ha(xa)),
$$
by the continuity of $\omega$. This easily implies that $\overline{\vp_\omega} (xa,b)= \overline{\vp_\omega} (a,x\ha b)$, for every $x \in \A$ and $a,b \in \BB$, so that $\overline{\vp_\omega}$
 satisfies ({\sf ips}$_3$).

Let now $x \in L(y)$ and $a,b \in \BB$. Now let $\{x^*_\beta\}$ and $\{y_\alpha\}$ nets in $\BB$, $\tau^*$-converging, respectively, to $x^*$ and $y$ and
such that $\vp_\omega((x^*_\beta - x^*_{\beta'})b,(x^*_\beta - x^*_{\beta'})b)  \to 0$ and $\vp_\omega ((y_\alpha - y_\alpha')a, (y_\alpha - y_\alpha')a \to 0$. Then we get
\begin{align*}
\overline{\vp_\omega}((xy)a,b) &= \omega(b\ha(xy)a)\\
&= \ip{\pi_\omega(xy)\lambda_\omega^0(a)}{\lambda_\omega^0(b)}\\
&=\ip{\pi_\omega(x) \mult \pi_\omega(y)\lambda_\omega^0(a)}{\lambda_\omega^0(b)}\\
&=\ip{ \pi_\omega(y)\lambda_\omega^0(a)}{\pi_\omega(x^*)\lambda_\omega^0(b)}\\
&=\lim_{\alpha,\beta} \ip{ \pi_\omega(y_\alpha)\lambda_\omega^0(a)}{\pi_\omega(x_\beta^*)\lambda_\omega^0(b)}\\
&= \lim_{\alpha,\beta} {\vp_\omega}(y_\alpha a, x_\beta^* b)= \overline{\vp_\omega}(ya, x\ha b).
\end{align*}
Thus, ({\sf ips}$_4$) holds. To complete the proof, we need to show that $\lambda_{\overline{\vp_\omega}}(\BB)$ is dense in the Hilbert space $\H_{\overline{\vp_\omega}}$.
This part of the proof is completely analogous to that given in \cite[Proposition 3.6]{FTT} and we omit it.
\end{proof}

If $\A$ is semi-associative and fully representable,  every continuous representable linear functional $\omega$ comes from a closed ips-form $\overline{\vp_\omega}$, but $\overline{\vp_\omega}$ need not be continuous,
in general, unless  {  more assumptions are made on the topology $\tau$.}

\begin{cor}\label{cor: ipscontinuity}Let $\A[\tau]$ be a fully representable semi-associative *-topological \pa\, with multiplication core $\BB$. Assume that $\A[\tau]$ is a Fr\'echet space.
Then, for every $\omega\in \rcab$, $\overline{\vp_\omega}$ is a continuous ips-form.
\end{cor}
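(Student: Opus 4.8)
The plan is to invoke Proposition \ref{prop: 3.10} for the algebraic content — that $\overline{\vp_\omega}$ is already an ips-form on $\A$ with core $\BB$ — and to devote the argument to establishing continuity, which is the only new ingredient over the Fréchet hypothesis. The key observation is that full representability turns $\overline{\vp_\omega}$ into an \emph{everywhere-defined} closed positive sesquilinear form on the Fréchet space $\A$, and such forms are automatically continuous by a closed-graph argument.

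First I would pass from the form to its associated linear map. Recall from the discussion preceding the statement that $D(\overline{\vp_\omega})$ coincides with the domain of the closure $\overline{\lambda_\omega^0}$ of the map $\lambda_\omega^0 : a \in \BB \mapsto \lambda_\omega^0(a)\in \H_\omega$, and that the assumption {(\sf fr)} gives $D(\overline{\vp_\omega})=\A$. Thus $\lambda_{\overline{\vp_\omega}}:=\overline{\lambda_\omega^0}$ is a linear map defined on all of $\A$, with values in the Hilbert space $\H_{\overline{\vp_\omega}}$, whose graph is closed with respect to $\tau\ha$ by construction, since a closure has, by definition, a closed graph.

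Next I would identify the relevant topologies. Since $\A$ is a *-topological \pa, the involution is $\tau$-continuous, hence each seminorm $p\ha_\alpha(x)=\max\{p_\alpha(x),p_\alpha(x\ha)\}$ is dominated by a $\tau$-continuous seminorm; consequently $\tau\ha=\tau$ and $\A[\tau\ha]$ is again Fréchet. The Hilbert space $\H_{\overline{\vp_\omega}}$ is a Banach space, hence Fréchet as well. Therefore the closed-graph theorem for Fréchet spaces applies to $\lambda_{\overline{\vp_\omega}}:\A[\tau\ha]\to\H_{\overline{\vp_\omega}}$, yielding a $\tau$-continuous seminorm $p$ such that $\|\lambda_{\overline{\vp_\omega}}(x)\|\leq p(x)$ for every $x\in\A$. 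Continuity of the form then follows at once from the Cauchy--Schwarz inequality \eqref{2.2}: for all $x,y\in\A$,
$$
|\overline{\vp_\omega}(x,y)|\leq \|\lambda_{\overline{\vp_\omega}}(x)\|\,\|\lambda_{\overline{\vp_\omega}}(y)\|\leq p(x)\,p(y),
$$
so that $\overline{\vp_\omega}$ is $\tau$-continuous. Combined with Proposition \ref{prop: 3.10}, this shows that $\overline{\vp_\omega}$ is a continuous ips-form.

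I expect the main (and essentially only) delicate point to be the legitimate applicability of the closed-graph theorem: namely the identification $\tau\ha=\tau$, which is what makes the domain space Fréchet and is precisely where the continuity of the involution is used, together with the observation that the graph of the closure $\overline{\lambda_\omega^0}$ is genuinely closed. Once these are in place — and given that $D(\overline{\vp_\omega})=\A$ by full representability — the passage to continuity of the form is routine.
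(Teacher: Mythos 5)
Your proposal is correct and takes essentially the same route as the paper's own proof: full representability makes $\overline{\lambda_\omega^0}$ an everywhere-defined closed map from the Fr\'echet space $\A[\tau]$ (where your observation $\tau\ha=\tau$, from continuity of the involution, is the right justification) into a Hilbert space, and the closed graph theorem yields its continuity. The only difference is cosmetic: the paper concludes by citing Proposition \ref{3.3}, whereas you combine Proposition \ref{prop: 3.10} with a direct Cauchy--Schwarz estimate $|\overline{\vp_\omega}(x,y)|\leq p(x)p(y)$ --- which, if anything, is slightly cleaner since it sidesteps the unit hypothesis appearing in the converse part of Proposition \ref{3.3}.
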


\begin{proof} The map $\overline{\lambda_\omega^0}$ is closed and everywhere defined. The closed graph theorem then implies that $\overline{\lambda_\omega^0}$ is continuous.
The statement follows from Proposition \ref{3.3}.
\end{proof}

Summarizing, we have

\begin{thm}  \label{thm_frimplies semis}
 Let $\A[\tau]$ be a fully representable *-topological \pa\, with multiplication core $\BB$ and unit $e \in \BB$.
Assume that $\A[\tau]$ is a Fr\'echet space and the following conditions hold
\begin{itemize}
\item[({\sf rc})] Every linear functional $\omega$ which is continuous and $\BB$-positive is representable;

\item[({\sf sq})] for every $x \in \A$, there exists a sequence $\{b_n\}\subset \BB$ such that $b_n \stackrel{\tau}{\to} x$ and the sequence $\{b_n^* b_n\}$ is increasing,  in the sense of the order of $\A^+(\BB)$.
\end{itemize}
Then $\A$ is *-semisimple.
\end{thm}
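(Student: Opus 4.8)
The plan is to reduce the statement to the sufficiency of a well-chosen family of continuous ips-forms and then exploit the three main hypotheses ({\sf fr}), ({\sf rc}), ({\sf sq}) together with the separation result of Theorem \ref{thm_separation}. By Proposition \ref{prop_finalnew}, since $e\in\BB$, the algebra $\A$ is *-semisimple if and only if the family $\pppb$ is sufficient. Now $\A$ is a semi-associative *-topological \pa\ which is fully representable and Fr\'echet, so Proposition \ref{prop: 3.10} and Corollary \ref{cor: ipscontinuity} apply: for each $\omega\in\rcab$ the closed form $\overline{\vp_\omega}$ is a \emph{continuous} ips-form with core $\BB$, hence $\overline{\vp_\omega}\in\pppb$, and it satisfies $\overline{\vp_\omega}(xa,b)=\omega(b^*xa)$ for all $x\in\A$, $a,b\in\BB$. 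Thus it suffices to prove that the subfamily $\{\overline{\vp_\omega}:\omega\in\rcab\}$ is already sufficient, which by Lemma \ref{lem:Msuff}(v) amounts to the implication: if $\overline{\vp_\omega}(xa,xa)=0$ for every $\omega\in\rcab$ and every $a\in\BB$, then $x=0$.

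The heart of the argument is a monotone-convergence step. Fix $x$ with $\overline{\vp_\omega}(xa,xa)=0$ for all $\omega,a$, and invoke ({\sf sq}) to pick a sequence $\{b_n\}\subset\BB$ with $b_n\stackrel{\tau}{\to}x$ and $\{b_n^*b_n\}$ increasing in $\A^+(\BB)$. For fixed $a\in\BB$, the functional $\omega_a(\cdot)=\omega(a^*\cdot\,a)$ again lies in $\rcab$ (this is exactly the computation of ({\sf r}$_1$)--({\sf r}$_3$) carried out in the proof of Proposition \ref{prop: 3.10}), so $\omega_a\geq0$ on $\A^+(\BB)$ and the numerical sequence $\omega_a(b_n^*b_n)=\omega\big((b_na)^*(b_na)\big)$ is nondecreasing. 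By ({\sf d}$_4$) we have $b_na\stackrel{\tau}{\to}xa$, and continuity of $\overline{\vp_\omega}$ gives $0=\overline{\vp_\omega}(xa,xa)=\lim_n\omega_a(b_n^*b_n)=\sup_n\omega_a(b_n^*b_n)$, forcing $\omega_a(b_n^*b_n)=0$ for every $n$. Taking $a=e$ yields $\omega(b_n^*b_n)=0$ for every $\omega\in\rcab$ and every $n$; by ({\sf rc}) the same vanishing holds for \emph{every} continuous $\BB$-positive functional.

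Since each $b_n^*b_n$ belongs to $\A^+(\BB)$ and is annihilated by all continuous $\BB$-positive functionals, Theorem \ref{thm_separation} (for which the properness $\A^+(\BB)\cap(-\A^+(\BB))=\{0\}$ is in force) forces $b_n^*b_n=0$ for all $n$. It then remains to pass from $b_n^*b_n=0$ to $x=0$, and this is the step I expect to be the main obstacle: the partial multiplication is only \emph{separately} continuous, so one cannot simply let $n\to\infty$ in $b_n^*b_n$. The plan is to use continuity of the involution (so $b_n^*\stackrel{\tau}{\to}x^*$) together with the closedness condition ({\sf cl}) and Fr\'echet completeness to legitimize the limit: the increasing, now identically zero, sequence $b_n^*b_n$ converges to $x^*x$, whence $x^*x=0$, and properness of the cone yields $x=0$. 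Finally, by Proposition \ref{prop_finalnew} the sufficiency of $\pppb$ gives $\R^*(\A)=\{0\}$, i.e.\ $\A$ is *-semisimple. The delicate point throughout is precisely that the approximants $b_n$ interact with the representable functionals only through the separately continuous products $b_na$ and $x^*b_n$, so ({\sf cl}) and the monotonicity built into ({\sf sq}) are exactly what is needed to transfer the cone-level information back to $x$ itself.
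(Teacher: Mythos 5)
Up to the separation step, your argument is essentially the paper's own proof: reduce *-semisimplicity to sufficiency of $\pppb$ via Proposition \ref{prop_finalnew}; use ({\sf fr}), ({\sf rc}) and the Fr\'echet assumption, through Proposition \ref{prop: 3.10} and Corollary \ref{cor: ipscontinuity}, to make each $\overline{\vp_\omega}$ a \emph{continuous} ips-form with core $\BB$; feed the ({\sf sq})-sequence into the continuity of $\overline{\vp_\omega}$ so that the nondecreasing nonnegative numbers $\omega(b_n^*b_n)$ tend to $\overline{\vp_\omega}(x,x)=0$ and hence vanish identically; extend this to all continuous $\BB$-positive functionals by ({\sf rc}); and invoke Theorem \ref{thm_separation}. (Your detour through the functionals $\omega_a$ and Lemma \ref{lem:Msuff}(v) is harmless but unnecessary: since $\overline{\vp_\omega}\in\pppb$, the contradiction hypothesis already gives $\overline{\vp_\omega}(x,x)=0$ directly, which is how the paper proceeds, taking $a=e$ from the start.) The paper runs this as a proof by contradiction from $x\neq 0$ and \emph{stops} at Theorem \ref{thm_separation}: the vanishing $\omega(b_n^*b_n)=0$ is simply declared to contradict that theorem. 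Strictly speaking this needs some $b_n^*b_n\neq 0$, a point left implicit there (as is the properness hypothesis $\A^+(\BB)\cap(-\A^+(\BB))=\{0\}$ of Theorem \ref{thm_separation}, which neither the theorem's statement nor your proposal actually secures). So your instinct that a residual step lurks at the end is sound.

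Your proposed repair of that residual step, however, does not work, and this is the genuine gap in the proposal. First, ({\sf cl}) is closedness of the map $y\mapsto zy$ with the left factor $z$ \emph{fixed}: to apply it you need a net $y_\alpha\in R(z)$ with $y_\alpha\to y$ \emph{and} $zy_\alpha$ convergent. In $b_n^*b_n$ both factors move with $n$, and nothing in the hypotheses makes $x^*b_n$ (or any auxiliary net) converge, so ({\sf cl}) yields neither $x\in R(x^*)$ nor $b_n^*b_n\to x^*x$; the assertion that ``the increasing, now identically zero, sequence $b_n^*b_n$ converges to $x^*x$'' is unjustified — indeed, in a partial *-algebra the product $x^*x$ need not even be defined. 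Second, even granting $x^*x=0$, the conclusion ``properness of the cone yields $x=0$'' is a non sequitur: properness only rules out $\pm y$ both lying in $\A^+(\BB)$, and says nothing about definiteness of squares; passing from $x^*x=0$ to $x=0$ amounts to (part of) the *-semisimplicity being proved — the proposition in Section \ref{sect-suffmanyrep} shows precisely that $x^*x=0$ places $x$ in the *-radical, so this reasoning is circular. To match the paper, the correct finish is simply the contradiction as stated there: once $x\neq 0$ is assumed, Theorem \ref{thm_separation} supplies a continuous positive functional not annihilating a nonzero $b_n^*b_n$, contradicting $\omega(b_n^*b_n)=0$.
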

\begin{proof} Assume, on the contrary, that there exists $x\in \A \setminus \{0\}$ such that $\vp(x,x)=0$, for every $\vp \in \pppb$. If $\omega$ is continuous and $\BB$-positive,
then by assumption it is representable; thus $\overline{\vp_\omega}$, which is everywhere defined on $\A\times \A$, is continuous, by Corollary \ref{cor: ipscontinuity}. Let $x=\lim_{n\to \infty} {b_n}$,
with ${b_n} \in \BB$ and $\{b_n^* b_n\}$ increasing. Then we have
$$ 0\leq \lim_{n\to \infty} \omega(b\ha_n b_n)=\lim_{n\to \infty} \vp_\omega (b_n,b_n) =\overline{\vp_\omega}(x,x)=0. $$ Then $\omega(b\ha_n b_n)=0$, for every $n \in {\mathbb N}$.
But this contradicts Theorem \ref{thm_separation}.
\end{proof}
As we have seen in Example \ref{ex_contnonrepr}, condition ({\sf rc}) is not fulfilled in general. To get an example of a situation where this condition is satisfied, it is enough to {replace} in
Example \ref{ex_contnonrepr} the normed partial *-algebra $L^1(I)$ with $L^2(I)$ (which is fully representable, as shown in \cite{FTT}). It is easily seen that both condition ({\sf rc}) and ({\sf sq})
 are satisfied in this case.  {  It has been known  since a long time} that this partial *-algebra is *-semisimple \cite{bt_ellepi}.

\section { Bounded elements in *-semisimple \pa s}
\label{sect-bddelem}

*-Semisimple topological partial *-algebras are characterized by the existence of a sufficient family of ips-forms. This fact was used in \cite{att_2010} and in \cite{FTT} to derive a number of  properties that we want to revisit in this larger framework.

\subsection{$\M$-bounded elements}

 { First we adapt }  to the present case the  definition of {$\M$-bounded} elements given in \cite[Def. 4.9]{att_2010}
for an $\Ao$-regular topological  \pa.

\bedefi \label{def_Mboun}
Let $\A$ be a topological \pa\ with multiplication core $\BB$ and a sufficient family $\M$ of continuous ips-forms with core $\BB$.
An element $x\in\A$ is called \emph{$\mc M$-bounded} if there exists $\gamma_x>0$ such that
$$
|\vp(xa,b)|\leq\gamma_x\, \vp(a,a)^{1/2}\varphi(b,b)^{1/2}, \;\forall\, \vp\in\mathcal M,\, a,b\in\BB\,.
$$
\findefi
An useful characterization of $\M$-bounded elements is given by the following proposition, whose proof is similar to that of  \cite[Proposition 4.10]{att_2010}.
\begin{prop} \label{prop: char Mbounded}
Let $\A[\tau]$ be a topological \pa\ with multiplication core $\BB$. Then, an element
 $x\in\A$ is $\M$-bounded if, and only if, there exists $\gamma_x\in\mathbb R$ such that
  $\varphi(xa,xa)\leq\gamma_x^2 \, \varphi(a,a)$ for all $\varphi\in \M$ and $a\in\BB$.
\label{boundedprop}
\end{prop}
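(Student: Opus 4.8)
The plan is to prove the two implications separately, keeping the \emph{same} constant $\gamma_x$ throughout, so that the equivalence is exact. The converse direction is the routine one: assuming the estimate $\vp(xa,xa)\leq\gamma_x^2\,\vp(a,a)$, I would simply apply the Cauchy--Schwarz inequality \eqref{2.2} to the pair $(xa,b)$ in $\A\times\A$, obtaining
$$
|\vp(xa,b)|\leq \vp(xa,xa)^{1/2}\,\vp(b,b)^{1/2}\leq \gamma_x\,\vp(a,a)^{1/2}\,\vp(b,b)^{1/2}
$$
for all $\vp\in\M$ and $a,b\in\BB$, which is precisely the defining inequality of $\M$-boundedness.

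The substantive direction is "$\M$-bounded $\Rightarrow$ estimate", and here I would work inside the GNS data of each $\vp\in\M$. The key observation is that, since an ips-form is defined on all of $\A\times\A$, the quotient map $\lambda_\vp$ is defined on the whole of $\A$; in particular $\lambda_\vp(xa)$ is a genuine vector of $\H_\vp$ with $\|\lambda_\vp(xa)\|^2=\vp(xa,xa)$, even though $xa$ need not lie in $\BB$. Fixing $\vp\in\M$ and $a\in\BB$, the $\M$-boundedness inequality reads
$$
|\ip{\lambda_\vp(xa)}{\lambda_\vp(b)}|=|\vp(xa,b)|\leq \gamma_x\,\vp(a,a)^{1/2}\,\|\lambda_\vp(b)\|,\qquad \forall\,b\in\BB.
$$
By condition ({\sf ips}$_2$), $\lambda_\vp(\BB)$ is dense in $\H_\vp$, so this bound extends by continuity to $|\ip{\lambda_\vp(xa)}{\eta}|\leq \gamma_x\,\vp(a,a)^{1/2}\,\|\eta\|$ for every $\eta\in\H_\vp$. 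Choosing $\eta=\lambda_\vp(xa)$ yields $\|\lambda_\vp(xa)\|^2\leq \gamma_x\,\vp(a,a)^{1/2}\,\|\lambda_\vp(xa)\|$, hence $\|\lambda_\vp(xa)\|\leq \gamma_x\,\vp(a,a)^{1/2}$, i.e. $\vp(xa,xa)\leq\gamma_x^2\,\vp(a,a)$; since $\vp\in\M$ and $a\in\BB$ are arbitrary, this is the desired estimate.

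The only real obstacle lies in this first direction, and it is exactly the temptation to substitute $b=xa$ directly into the defining inequality, which is illegitimate because $xa$ need not belong to $\BB$. The resolution is the remark above: $\lambda_\vp$ is available on all of $\A$, while the density condition ({\sf ips}$_2$) is precisely what allows the bound, initially tested only against $\lambda_\vp(\BB)$, to be applied to the vector $\lambda_\vp(xa)$ itself. This argument parallels the one given for \cite[Proposition 4.10]{att_2010}.
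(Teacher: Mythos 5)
Your proof is correct and follows essentially the same route as the paper, which omits the argument and refers to \cite[Proposition 4.10]{att_2010}: Cauchy--Schwarz for the sufficiency, and for the necessity the extension of the tested bound from the dense subspace $\lambda_\vp(\BB)$ of $\H_\vp$ (condition ({\sf ips}$_2$)) to the vector $\lambda_\vp(xa)$, which exists because an ips-form is everywhere defined so $\lambda_\vp$ acts on all of $\A$. Your closing remark correctly identifies the one pitfall, namely that one cannot simply set $b=xa$ in the defining inequality since $xa\notin\BB$ in general.
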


If $x,y$ are $\M$-bounded elements and their weak product $x\wmult y$ exists, then $x\wmult y$ is also $\M$-bounded.

\begin{lemma}
\label{inverse} Let the $\M$-bounded element $x\in\A$  have a strong inverse $x^{-1}$.  {  Then $\pi(x)$ has a strong inverse for every quasi-symmetric *-representation $\pi$.}
\end{lemma}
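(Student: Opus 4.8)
The plan is to transport through the representation the two algebraic identities that witness the strong inverse, using the last Proposition of Section~\ref{sect-suffmanyrep}, which converts a strong product $\bullet$ in $\A$ into the operator strong product $\circ$. Note first that the standing hypotheses of this subsection (a multiplication core $\BB$ with a sufficient family $\M$, and a unit $e\in\BB$) make $\A$ \emph{*-semisimple}, so that Proposition available. Throughout, $\pi$ denotes a quasi-symmetric element of $\repb$ with $\pi(\BB)\subset\LDO{\D(\pi)}$ and $\pi(e)=I_{\D(\pi)}$; by Proposition~\ref{regrep} every $(\tau,{\sf t}_s)$-continuous *-representation is $\BB$-regular and, as noted after that proposition, may be assumed to satisfy $\pi(\BB)\subset\LDO{\D(\pi)}$, so this is no real loss of generality. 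Saying that $x$ admits a strong inverse $x^{-1}$ means precisely that the strong products $x\bullet x^{-1}$ and $x^{-1}\bullet x$ are well defined and that
$$ x\bullet x^{-1}=x^{-1}\bullet x=e. $$

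The core of the argument is then immediate. Applying the quoted Proposition to each of these two well-defined strong products, I obtain that $\pi(x)\circ\pi(x^{-1})$ and $\pi(x^{-1})\circ\pi(x)$ are well defined, with
$$ \pi(x)\circ\pi(x^{-1})=\pi(x\bullet x^{-1})=\pi(e)=I_{\D(\pi)}, \qquad \pi(x^{-1})\circ\pi(x)=\pi(x^{-1}\bullet x)=\pi(e)=I_{\D(\pi)}. $$
Hence $\pi(x^{-1})$ is a two-sided strong inverse of $\pi(x)$, which is the assertion. The hypothesis that $x$ be $\M$-bounded enters to guarantee that $\pi(x)$ is in fact a bounded operator: for $\xi\in\D(\pi)$ the form $\vp_\pi^\xi(u,v)=\ip{\pi(u)\xi}{\pi(v)\xi}$ lies in $\pppb$, and Proposition~\ref{prop: char Mbounded} gives $\|\pi(x)\pi(a)\xi\|^2=\vp_\pi^\xi(xa,xa)\leq\gamma_x^2\,\vp_\pi^\xi(a,a)=\gamma_x^2\|\pi(a)\xi\|^2$ for all $a\in\BB$; taking $a=e$ and using $\pi(e)=I_{\D(\pi)}$ shows $\overline{\pi(x)}\in\B(\H)$, so that the strong inverse just produced is the operator-theoretic inverse of $\overline{\pi(x)}$.

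The point that needs care — and the main obstacle — is the compatibility of the families of ips-forms implicit in the two notions being transported. The strong product $\bullet$, and hence the strong inverse, are defined relative to the sufficient family $\M$, whereas the Proposition of Section~\ref{sect-suffmanyrep} requires well-definedness relative to the maximal family $\pppb$, and likewise the boundedness step above invokes the bound for the forms $\vp_\pi^\xi\in\pppb$. The clean resolution is to read the whole statement with $\M=\pppb$, in which case the identities $x\bullet x^{-1}=x^{-1}\bullet x=e$ already hold with respect to $\pppb$ and both steps apply verbatim. If instead one wishes to keep a proper subfamily $\M\subsetneq\pppb$, one must first upgrade the defining relations ({\sf sm}$_1$), ({\sf sm}$_2$) of $x\bullet x^{-1}$ and $x^{-1}\bullet x$ from $\M$ to all of $\pppb$; this is exactly where the uniform constant $\gamma_x$ furnished by $\M$-boundedness, together with the $\tau$-continuity of the forms in $\pppb$, should be brought to bear.
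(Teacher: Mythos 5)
Your proof is correct and takes essentially the same route as the paper's: the paper's own argument consists precisely of applying the unnamed proposition at the end of Section \ref{sect-suffmanyrep} to the identities $x\bullet x^{-1}=x^{-1}\bullet x=e$, obtaining $I=\pi(e)=\pi(x)\circ\pi(x^{-1})$ (and likewise on the other side) and hence the strong inverse $\pi(x)^{-1}:=\pi(x^{-1})$. Your two supplementary observations --- that $\M$-boundedness gives $\overline{\pi(x)}\in\B(\H)$, and that the quoted proposition requires the strong products to be well defined with respect to the maximal family $\pppb$ rather than a proper subfamily $\M$ (a point the paper's terse proof glosses over, and which you resolve correctly by reading $\M=\pppb$) --- are accurate refinements of, not deviations from, the paper's argument.
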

\begin{proof}Let $x\in\A$ with strong inverse $x^{-1}$, i.e., $x\bullet x^{-1}=x^{-1}\bullet x=e$. Let $\pi$ be a *-representation with $\pi(e)=I$, then
$$
I=\pi(e)=\pi(x\bullet x^{-1})=\pi(xx^{-1})=  { \pi(x) \mult \pi (x^{-1})}=\pi(x)\circ\pi(x^{-1}).
$$
It follows that the strong inverse $\pi(x)^{-1}:=\pi(x^{-1})$ of $\pi(x)$ exists.
\end{proof}

Given $X \in \LDH$,   we denote by  $\rho_\circ^\D(X)$ the set of all complex numbers $\lambda$ such that
{$X-\lambda I_\D$} has a strong bounded inverse \cite[Section 3]{antratsc} and by $\sigma_\circ^\D(X):= {\mb C}\setminus \rho_\circ^\D(X)$
the corresponding spectrum of $X$.

If $\pi$ is a *-representation of $\A$, from \cite[Proposition 3.9]{antratsc}
it follows that $\sigma_\circ^\D(\pi(x))=\sigma(\overline{\pi(x)})$. If, in particular, $\pi$ is a quasi-symmetric *-representation, we can conclude, by Lemma \ref{inverse}, that $\rho^\M(x)\subseteq\rho(\overline{\pi(x)})=\rho_\circ^\D(\pi(x))$, where $\rho^\M(x)$ denotes the set of complex numbers $\lambda$ such that the strong inverse $(x-\lambda e)^{-1}$ exists as an $\M$-bounded element of $\A$
\cite[Definition 4.28]{att_2010}. Hence,
\begin{equation}\label{spectri}
  \sigma(\overline{\pi(x)})\subseteq\sigma^\M(x).
\end{equation}

 { Exactly as}  for partial *-algebras of operators, there is here a natural distinction between hermitian elements $x$ of $\A$ (i.e. $x=x\ha$) and
 {  self-adjoint} elements (hermitian and with real spectrum).

\begin{defn}
The element $x\in \A$ is said  { $\M$-self-adjoint} if it is hermitian and $\sigma^\M(x)\subseteq\mathbb{R}$.
\end{defn}

\begin{prop}

If $x\in\A$ is  { $\M$-self-adjoint}, then for every quasi-symme\-tric $\pi\in\repb$, the operator $\pi(x)$ is essentially self-adjoint. \end{prop}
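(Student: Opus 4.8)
The plan is to reduce the assertion to the classical fact that a closed symmetric operator whose spectrum is contained in $\mathbb{R}$ is self-adjoint, and to feed that criterion with the spectral inclusion \eqref{spectri}. Two ingredients are needed: the \emph{symmetry} of $\pi(x)$, which comes from $x$ being hermitian, and the \emph{reality of the spectrum} of $\overline{\pi(x)}$, which comes from $\sigma^\M(x)\subseteq\mathbb{R}$ together with the quasi-symmetry of $\pi$.

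First I would check that $\pi(x)$ is a symmetric operator on $\D(\pi)$. Since $x$ is hermitian, $x=x\ha$, and since $\pi$ is a *-representation we have $\pi(x)\ad=\pi(x\ha)=\pi(x)$. Because $\D(\pi)\subseteq D(\pi(x)\x)$ for any element of a partial O*-algebra, and $\pi(x)\ad=\pi(x)\x\up\D(\pi)$ coincides with $\pi(x)$ on $\D(\pi)$, this gives, for all $\xi,\eta\in\D(\pi)$,
$$
\ip{\pi(x)\xi}{\eta}=\ip{\xi}{\pi(x)\ad\eta}=\ip{\xi}{\pi(x)\eta},
$$
so $\pi(x)$ is symmetric and hence $\overline{\pi(x)}$ is a closed symmetric operator.

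Next I would bring in the spectral data. By hypothesis $x$ is $\M$-self-adjoint, hence hermitian with $\sigma^\M(x)\subseteq\mathbb{R}$; moreover $\pi$ is quasi-symmetric, so \eqref{spectri} applies and yields
$$
\sigma(\overline{\pi(x)})\subseteq\sigma^\M(x)\subseteq\mathbb{R}.
$$
It then remains to invoke the standard criterion (see \cite{schmu}): for a closed symmetric operator $T$ and $\lambda$ with $\mathrm{Im}\,\lambda\neq 0$, the operator $T-\lambda I$ is injective with closed range, and $T$ is self-adjoint precisely when both $i$ and $-i$ lie in its resolvent set, i.e. when its deficiency indices vanish. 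Since $\sigma(\overline{\pi(x)})\subseteq\mathbb{R}$ forces $\pm i\in\rho(\overline{\pi(x)})$, both deficiency indices of $\pi(x)$ are zero, so $\overline{\pi(x)}$ is self-adjoint; that is, $\pi(x)$ is essentially self-adjoint.

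I do not expect a serious obstacle here, since the content is entirely encapsulated in \eqref{spectri} and in the symmetric-operator criterion. The only points requiring care are verifying that the two hypotheses map exactly onto the two ingredients---hermiticity of $x$ giving symmetry of $\pi(x)$, and $\M$-self-adjointness together with quasi-symmetry of $\pi$ giving the real-spectrum condition through \eqref{spectri}---and making sure that $\pi(x)$ is genuinely symmetric (not merely $\ad$-hermitian in the partial O*-algebra) so that the deficiency-index argument is legitimate.
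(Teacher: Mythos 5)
Your proof is correct and follows essentially the same route as the paper's: symmetry of $\pi(x)$ from $x=x\ha$, the spectral inclusion \eqref{spectri} (which is where the quasi-symmetry of $\pi$ enters, via Lemma \ref{inverse}), and the standard fact that a closed symmetric operator with real spectrum is self-adjoint. The paper's argument is just a compressed version of these same three steps, so your more detailed verification (in particular of the symmetry of $\pi(x)$ and the deficiency-index criterion) is fine as it stands.
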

\begin{proof}
If $x\in\A$ is  { $\M$-self-adjoint}, then, for every $\pi\in\repb$, the operator $\pi(x)$ is symmetric and
$\sigma^\M(x)\subseteq\mathbb{R}$. By \eqref{spectri}
it follows that $\overline{\pi(x)}$ is self-adjoint, hence $\pi(x)$ is essentially   self-adjoint.
\end{proof}

\subsection{Order bounded elements}
\label{subsec-orderbdd}

\subsubsection{Order structure}\label{sect_order}

Let $\A[\tau]$ be a topological \pa\ with multiplication core $\BB$. If $\A[\tau]$ is *-semisimple, there is a natural order on $\A$ defined by the family $\pppb$ or by any sufficient subfamily $\M$ of $\pppb$,
 and this
order  can be used to define a different notion of \emph{boundedness} of an element $x\in \A$ \cite{FTT, schm_weyl,vidav}.

\bedefi \label{def_cone} Let $\A[\tau]$ be a topological \pa\ and $\BB$ a subspace of $R\A$.
A subset $\K$ of $\A_h:=\{x \in \A:\, x=x^*\}$ is called a $\BB$-\emph{admissible wedge} if
\begin{itemize}
\item[(1)] $e \in \K$, if $\A$ has a unit $e$;
\item[(2)] $x+y \in \K, \;\forall\, x,y \in \K$;
\item[(3)] $\lambda x \in \K, \;\forall\, x \in \K, \, \lambda\geq 0$;
\item[(4)] $(a^*x)a=a^*(xa) =:a^*xa\in \K, \;\forall\, x \in \K, \, a \in \BB$.
\end{itemize}
\findefi
\noi As usual, $\K$ defines an order on the real vector space $\A_h$ by $x \leq y \Leftrightarrow y-x \in \K$.

\medskip In the rest of this section, we will suppose that the partial *-algebras under consideration are  \emph { semi-associative}. Under this assumption, the first equality in (4) of
Definition \ref{def_cone} is automatically satisfied.

\medskip Let $\A$ be a topological partial *-algebra with multiplication core $\BB$. We put
$$
\BB^{(2)}=\left\{\sum_{k=1}^n x_k^* x_k, \, x_k \in \BB,\, n \in {\mb N}\right\}.
$$
\textcolor{black}If $\BB$ is a *-algebra, this is nothing but the set (wedge) of positive elements of $\BB$.
The \emph{$\BB$-strongly positive} elements of $\A$ are then defined as the elements of $\A^+(\BB):={\overline{\BB^{(2)}}}^\tau$, {  already defined in Section \ref{sect-represfunct}.}
 Since $\A$ is  { semi-associative}, the set $\A^+(\BB)$ of $\BB$-strongly positive elements is a $\BB$-admissible wedge.°

\medskip
We also define
$$
\A^+_{\rm alg}=\left\{\sum_{k=1}^n x_k^* x_k, \, x_k \in R\A,\, n \in {\mb N}\right\},
$$
the set (wedge) of positive elements of $\A$ and we put
$\A^+_{\rm top}:=\overline{\A^+_{\rm alg}}^\tau$.
The  { semi-associativity}  implies that $R\A \cdot R\A \subseteq R\A$ and then $\A^+_{\rm top}$ is $R\A$-admissible.

\medskip
Let $\M\subseteq \pppb$.
An element $x \in \A$ is called $\M$-\emph{positive} if
$$
 \vp(xa,a)\geq 0, \quad \forall \vp \in \M, \, a\in \BB.
$$
An $\M$-positive element is automatically hermitian.
Indeed, if  $\vp(xa,a)\geq 0$,  \mbox{$\forall\, \vp\in\M$,}  $\forall\, a\in\BB$, then $\vp(a,x\ha a) = \vp(xa,a) \geq 0$ and $\vp(x\ha a,a) \geq 0$;
hence $\vp((x-x\ha)a,a)=0, \, \forall\, \vp\in\M,  \forall\, a\in\BB$. By (iii) of Lemma \ref{lem:Msuff}, it follows that $x=x\ha$.

We denote by $\A_\M^+$ the set of all $\M$-positive elements. Clearly $\A_\M^+$ is a $\BB$-admissible wedge.

\begin{prop}  The following inclusions hold
\begin{equation}\label{eq:3cones}
\A^+(\BB) \subseteq \A^+_{\rm top} \subseteq \A_\M^+, \quad \forall \M \subseteq \pppb.
\end{equation}
\end{prop}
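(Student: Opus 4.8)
The plan is to establish the two inclusions separately. The first, $\A^+(\BB)\subseteq\A^+_{\rm top}$, is essentially immediate: since $\BB$ is a multiplication core it is contained in $R\A$, so every finite sum $\sum_k x_k^* x_k$ with $x_k\in\BB$ is \emph{a fortiori} a finite sum with $x_k\in R\A$. Thus $\BB^{(2)}\subseteq\A^+_{\rm alg}$, and taking $\tau$-closures (monotone under inclusion) gives $\A^+(\BB)=\overline{\BB^{(2)}}^\tau\subseteq\overline{\A^+_{\rm alg}}^\tau=\A^+_{\rm top}$.

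For the second inclusion $\A^+_{\rm top}\subseteq\A_\M^+$, I would first prove the purely algebraic version $\A^+_{\rm alg}\subseteq\A_\M^+$ and then pass to the closure. Let $x=\sum_k y_k^* y_k$ with $y_k\in R\A$, and fix $\vp\in\M$ and $a\in\BB$; note that $xa$ is well defined because $a\in\BB\subseteq R\A$. For each $k$ one has $y_k^*\in L(y_k)$ (as $y_k\in R\A$ forces $(y_k^*,y_k)\in\Gamma$), so applying ({\sf ips}$_4$) with the substitution $x\mapsto y_k^*$, $y\mapsto y_k$ and $b=a$ yields $\vp(y_k a, y_k a)=\vp(a,(y_k^* y_k)a)$. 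Combining this with ({\sf ips}$_3$), which gives $\vp((y_k^* y_k)a,a)=\vp(a,(y_k^* y_k)^* a)=\vp(a,(y_k^* y_k)a)$ by hermiticity of $y_k^* y_k$, I obtain $\vp((y_k^* y_k)a,a)=\vp(y_k a, y_k a)\geq 0$. Summing over $k$ gives $\vp(xa,a)\geq 0$ for all $\vp\in\M$ and $a\in\BB$, that is, $x\in\A_\M^+$.

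The remaining point, which upgrades the algebraic inclusion to the topological one, is that $\A_\M^+$ is $\tau$-closed. For fixed $\vp\in\M\subseteq\pppb$ and $a\in\BB$, the $\tau$-continuity of $\vp$ gives an estimate $|\vp(xa,a)|\leq\gamma\,p_\alpha(xa)\,p_\alpha(a)$, and the map $x\mapsto xa$ is $\tau$-continuous by ({\sf d}$_4$) of the multiplication core; hence $x\mapsto\vp(xa,a)$ is $\tau$-continuous. Consequently $\{x\in\A:\vp(xa,a)\geq 0\}$ is the preimage of the closed set $[0,\infty)\subset\CN$ and is $\tau$-closed, so $\A_\M^+=\bigcap_{\vp\in\M,\,a\in\BB}\{x:\vp(xa,a)\geq 0\}$ is $\tau$-closed as an intersection. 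Since $\A^+_{\rm alg}\subseteq\A_\M^+$, I conclude $\A^+_{\rm top}=\overline{\A^+_{\rm alg}}^\tau\subseteq\A_\M^+$.

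I expect the only delicate step to be the bookkeeping in the first identity, namely matching the indices in ({\sf ips}$_4$) so that $(xy)b$ becomes $(y_k^* y_k)a$ and $x^*a$ becomes $y_k a$; the closedness argument is then routine given ({\sf d}$_4$) and the continuity of $\vp$. The standing semi-associativity assumption of this section is what makes $\A^+_{\rm alg}$ a wedge and keeps all the triple products unambiguous, but it is not itself invoked in the chain of identities above.
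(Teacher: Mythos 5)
Your proof is correct and follows essentially the same route as the paper: the key identity $\vp((y_k^*y_k)a,a)=\vp(y_ka,y_ka)\geq 0$ via ({\sf ips}$_4$) (you route it through ({\sf ips}$_3$) and hermiticity, the paper through hermitian symmetry of $\vp$, a cosmetic difference), combined with the $\tau$-continuity of $x\mapsto\vp(xa,a)$ from ({\sf d}$_4$) and the continuity of $\vp\in\pppb$. The only repackaging is that you establish the algebraic inclusion $\A^+_{\rm alg}\subseteq\A^+_\M$ and then the $\tau$-closedness of $\A^+_\M$, whereas the paper passes to the limit along an approximating net $b_\alpha\to x$ directly --- the same two ingredients in equivalent form.
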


\begin{proof} We only prove the second inclusion. Let $x \in \A^+_{\rm top}$. Then $x =\lim_\alpha b_\alpha$ with $b_\alpha
=\sum_{i=1}^n c_{\alpha,i}^*c_{\alpha,i}$, $c_{\alpha,i}\in R\A$. Thus,
\begin{align*} \vp(xa,a)&= \lim_\alpha \vp(b_\alpha a, a) = \lim_\alpha \vp(\sum_i (c_{\alpha,i}^*c_{\alpha,i})a,a)\\
 & = \lim_\alpha \sum_i\vp( (c_{\alpha,i}^*c_{\alpha,i})a,a)= \lim_\alpha \sum_i \vp(c_{\alpha,i}a, c_{\alpha,i} a)\geq 0.
\end{align*}
by ({\sf ips}$_4$).
\end{proof}

Of course, one expects that under certain conditions the converse inclusions hold, or   { that} the three sets in \eqref{eq:3cones} actually coincide.
A partial answer is given in Corollary \ref{cor_compare}.

\beex We give here two examples where the wedges considered above coincide.

(1) The first example, very elementary, is obtained by considering the space $L^p(X)$, $p\geq 2$. Indeed, it is easily seen that the $\M$-positivity of a function $f$ simply means that $f(t)\geq 0$ a.e. in $X$ ($\M$ is here the family of ips-forms defined in Example \ref{ex_34}). On the other hand it is well-known that such a function can be approximated in norm by a sequence of nonnegative functions of $L^\infty(X)$.

(2) Let $T$ be a self-adjoint operator with dense domain $D(T)$ and denote
by $E(\cdot)$ the spectral measure of $T$. We consider the space $\LDH$ where $D:= \D^\infty(T)= \bigcap_{n\in {\mb N}}D(T^n)$. We prove that if $\LDH$ is endowed
with the topology ${\sf t}_{s^*}$ and $\M$ is the family of ips-forms defined in Example \ref{ex_33}, then every $X\in \LDH$ which is $\M$-positive, i.e., $\ip{X\xi}{\xi}\geq 0$,
 for every $\xi \in \D$, is the ${\sf t}_{s^*}$-limit of elements of $\LBD^{(2)}$.  Indeed, if $\Delta, \Delta'$ are bounded Borel subsets of the real line, then  $E(\Delta ^{(\prime)})\xi \in \D$, for every $\xi \in \H$.
This implies that $E(\Delta')YE(\Delta)$ is a bounded operator in $\H$, for every $Y \in \LDH$
 and its
restriction to $\D$ belongs to $\LBD$. Put $\Delta_N=(-N, N]$, $N \in {\mb N}$ and $\delta_m=(m,m+1]$, $m\in {\mb Z}$. The $\M$-positivity of $X$ implies that $E(\Delta_N)XE(\Delta_N)= B_N^*B_N$ for some bounded operator $B_N$.
Then, observing that $\sum_{m \in {\mb Z}}E(\delta_m)= I$, in strong  or strong*-sense, we obtain
\begin{align*}
 E(\Delta_N)XE(\Delta_N)&= B_N^*B_N= E(\Delta_N)B_N^*B_N E(\Delta_N)\\
&=E(\Delta_N)B_N^* \left(\sum_{m \in {\mb Z}}E(\delta_m)\right) B_NE(\Delta_N)\\
&= \sum_{m \in {\mb Z}}(E(\Delta_N)B_N^* E(\delta_m)) (E(\delta_m)B_N E(\Delta_N)).
\end{align*}
This proves that $E(\Delta_N)XE(\Delta_N)$ belongs to the ${\sf t}_{s^*}$-closure of $\LBD^{(2)}$
Now, if we let $N \to \infty$, we easily get $\|X\xi - E(\Delta_N)XE(\Delta_N)\xi\|\to 0$ and so the statement is proved.
\enex

\medskip An improvement of Theorem \ref{thm_separation} is provided by the following

\begin{cor} \label{thm_separation2} Let $\M$ be sufficient. Then,  for every $x \in \A^+_\M$, $x \neq 0$, there exists $\omega\in \rcab$ with the properties
\begin{itemize}
  \item[(a)] $\omega (y)\geq 0$, $\; \forall y \in \A^+_\M$;
  \item[(b)] $\omega(x)>0$.
\end{itemize}
\end{cor}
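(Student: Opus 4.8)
The plan is to sidestep the Hahn--Banach separation used in Theorem \ref{thm_separation} and instead exhibit the required functional \emph{explicitly} as one of the representable functionals $\omega_\vp^a$ already manufactured in Proposition \ref{3.3}. The reason this is possible is that the $\M$-positivity cone $\A^+_\M$ is defined precisely so that each such functional is automatically nonnegative on it; hence the only genuine task is to single out one that does not kill $x$. First I would note that, in the semi-associative setting standing throughout this subsection, the multiplication core $\BB$ is in fact an associative algebra: it is closed under the partial product by ({\sf d}$_2$), and for $b,c,d\in\BB\subseteq R\A$ semi-associativity gives $(bc)d=b(cd)$. Therefore Proposition \ref{3.3} applies, and for every $\vp\in\M\subseteq\pppb$ and every $a\in\BB$ the functional $\omega_\vp^{a}(y):=\vp(ya,a)$ is representable with respect to $\BB$; its $\tau$-continuity follows from the $\tau$-continuity of $\vp$ combined with ({\sf d}$_4$) (the map $y\mapsto ya$ being $\tau$-continuous makes $y\mapsto p_\alpha(ya)$ a continuous seminorm). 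Thus $\omega_\vp^{a}\in\rcab$ for all such $\vp,a$.

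Property (a) is then immediate and uniform: for every $y\in\A^+_\M$ the very definition of $\M$-positivity yields $\omega_\vp^{a}(y)=\vp(ya,a)\geq 0$ for all $\vp\in\M$ and $a\in\BB$. So \emph{whichever} pair $(\vp,a)$ I eventually choose, the resulting functional is nonnegative on $\A^+_\M$, and (a) holds automatically. The whole weight of the proof therefore falls on securing (b). For this I would invoke the sufficiency of $\M$ in the form of Lemma \ref{lem:Msuff}(iii): since $x\neq 0$, it cannot be that $\vp(xa,a)=0$ for all $\vp\in\M$ and $a\in\BB$, so there exist $\vp_0\in\M$ and $a_0\in\BB$ with $\vp_0(xa_0,a_0)\neq 0$. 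Because $x\in\A^+_\M$ forces $\vp_0(xa_0,a_0)\geq 0$, this inequality is strict. Setting $\omega:=\omega_{\vp_0}^{a_0}$ then delivers $\omega(x)=\vp_0(xa_0,a_0)>0$, which is (b), while (a) has already been checked.

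I do not anticipate a serious obstacle: the argument is essentially bookkeeping layered on Proposition \ref{3.3} and Lemma \ref{lem:Msuff}. The one point that merits care is verifying that $\BB$ is genuinely an algebra, so that Proposition \ref{3.3} is legitimately applicable; this is exactly where the standing semi-associativity assumption of the subsection is consumed. Were semi-associativity dropped, one would instead have to re-establish the representability of $\omega_\vp^{a}$ directly from the axioms ({\sf r}$_1$)--({\sf r}$_3$), which would be the only nontrivial step.
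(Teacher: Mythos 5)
Your proof is correct and is essentially the paper's own argument: by sufficiency of $\M$ (Lemma \ref{lem:Msuff}(iii)) there exist $\vp_0\in\M$ and $a_0\in\BB$ with $\vp_0(xa_0,a_0)\neq 0$, hence $>0$ since $x\in\A^+_\M$, and the paper likewise takes $\omega(y):=\vp_0(ya_0,a_0)$, whose membership in $\rcab$ comes from Proposition \ref{3.3}. Your additional check that semi-associativity makes $\BB$ an associative algebra, so that Proposition \ref{3.3} is legitimately applicable, is a detail the paper leaves implicit.
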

\begin{proof} By the previous proposition, if $x \in \A^+_\M$, $x \neq 0$, there exist $\vp \in \M$ and $a \in \BB$ such that $\vp(xa,a)>0$. Hence the linear functional $\omega(y):=\vp(ya,a)$ has the
desired properties.
\end{proof}

\medskip
\begin{prop}\label{prop_Mcone}Let the family $\M$ be sufficient. Then, $\A^+_\M$ is a cone, i.e.,  $\A^+_\M \cap (-\A^+_\M)=\{0\}$.
\end{prop}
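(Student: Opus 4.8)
The plan is to unwind the definition of $\A^+_\M$ and reduce the statement to one of the equivalent characterizations of sufficiency supplied by Lemma \ref{lem:Msuff}. First I would fix an element $x \in \A^+_\M \cap (-\A^+_\M)$ and write out what each membership says. From $x \in \A^+_\M$ one gets $\vp(xa,a) \geq 0$ for all $\vp \in \M$ and all $a \in \BB$. From $-x \in \A^+_\M$, using that $\vp$ is linear in its first argument, one gets $\vp((-x)a,a) = -\vp(xa,a) \geq 0$, that is $\vp(xa,a) \leq 0$, over the same range of $\vp$ and $a$. Combining the two inequalities yields the single equality $\vp(xa,a)=0$ for every $\vp \in \M$ and every $a \in \BB$.

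The second and final step is to invoke sufficiency. Since $\M$ is assumed sufficient, condition (iii) of Lemma \ref{lem:Msuff} applies verbatim: the vanishing of $\vp(xa,a)$ for all $\vp \in \M$ and all $a \in \BB$ forces $x=0$. This establishes $\A^+_\M \cap (-\A^+_\M) \subseteq \{0\}$. The reverse inclusion is immediate, since $0$ is $\M$-positive (indeed $\vp(0,a)=0\geq 0$), so $0 \in \A^+_\M \cap (-\A^+_\M)$; hence the intersection equals $\{0\}$ and $\A^+_\M$ is a cone.

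I do not expect any genuine obstacle here. The only points meriting a moment's care are the linearity of $\vp$ in its first slot, which legitimizes passing from $-x$ to $-\vp(xa,a)$, and the recognition that the two one-sided cone inequalities collapse to the exact equality that feeds into Lemma \ref{lem:Msuff}(iii). Once those two defining inequalities are written down side by side, the result is essentially a one-line consequence of the already-established equivalences describing sufficiency, so no new machinery is required.
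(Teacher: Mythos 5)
Your proof is correct and follows essentially the same route as the paper's: combine the two one-sided inequalities to get $\vp(xa,a)=0$ for all $\vp\in\M$, $a\in\BB$, and conclude by sufficiency. You are in fact slightly more careful than the paper in explicitly citing the equivalent characterization (iii) of Lemma \ref{lem:Msuff}, which is indeed the form of sufficiency being used.
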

\begin{proof} If $x\in \A^+_\M \cap (-\A^+_\M)$, then $\vp(xa,a)\geq 0$ and $\vp((-x)a,a)\geq 0$, for every $\vp \in \M$ and $a \in \BB$.
Hence $\vp(xa,a)= 0$, for every $\vp \in \M$ and $a \in \BB$. The sufficiency of $\M$ then implies $x=0$.
\end{proof}

\berem The fact that  $\A^+_\M$ is a cone automatically implies that $\A^+(\BB)$ is a cone too. \enrem

\medskip The following statement shows that $\pppb$-positivity is exactly what is needed if we want the order to be preserved under any continuous *-representation.
A partially equivalent statement is given in \cite[Proposition 3.1]{FTT}.  { For making the notations lighter, we put $\A_{\mc P}^+:=\A_{\pppb}^+$.}

\begin{prop}\label{prop_pos}
Let $\A$ be a topological partial *-algebra with multiplication core $\BB$ and unit $e \in \BB$. Then,  { the element $x\in\A$ belongs to $\A^+_\P$ if and only if  the operator $\pi(x)$ is positive for every
 $(\tau, {\sf t}_s)$-continuous *-representation $\pi$ with $\pi(e)=I_{\D(\pi)}$.}
\end{prop}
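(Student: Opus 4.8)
The plan is to prove both implications by passing back and forth between the members of $\pppb$ and the $(\tau,{\sf t}_s)$-continuous *-representations $\pi$ with $\pi(e)=I_{\D(\pi)}$, using the two standard bridges between forms and representations. On one side, every such $\pi$ and every $\xi\in\D(\pi)$ produce the form $\vp_\pi^\xi(y,z)=\ip{\pi(y)\xi}{\pi(z)\xi}$; on the other side, every $\vp\in\pppb$ produces its GNS triple $(\pi_\vp,\lambda_\vp,\H_\vp)$. The crux is that each of these objects lands in the right class: $\vp_\pi^\xi\in\pppb$, and $\pi_\vp$ is $(\tau,{\sf t}_s)$-continuous with $\pi_\vp(e)=I$.

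For the direction ``$x\in\A^+_\P\Rightarrow$ positivity'', I would fix a $(\tau,{\sf t}_s)$-continuous $\pi$ with $\pi(e)=I_{\D(\pi)}$ and first reduce to the case $\pi(\BB)\subset\LDO{\D(\pi)}$, which is permitted by the observation preceding Proposition \ref{prop_finalnew} (pass to the canonical extension $\pi_1$ from the proof of Proposition \ref{regrep}). One checks that $\pi_1$ extends $\pi$, is again $(\tau,{\sf t}_s)$-continuous, and still satisfies $\pi_1(e)=I_{\D(\pi_1)}$, since $\pi_1(e)\big(\xi_0+\sum_i\pi(b_i)\xi_i\big)=\xi_0+\sum_i\pi(eb_i)\xi_i=\xi_0+\sum_i\pi(b_i)\xi_i$. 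Then for each $\xi\in\D(\pi_1)$ the form $\vp_{\pi_1}^\xi$ is a continuous ips-form with core $\BB$ by Proposition \ref{regrep} and the estimate $|\vp_{\pi_1}^\xi(y,z)|\leq\|\pi_1(y)\xi\|\,\|\pi_1(z)\xi\|\leq p(y)p(z)$, hence $\vp_{\pi_1}^\xi\in\pppb$. Taking $a=e\in\BB$ in the definition of $\A^+_\P$ and using $xe=x$ together with $\pi_1(e)=I$, I obtain $0\leq\vp_{\pi_1}^\xi(xe,e)=\ip{\pi_1(xe)\xi}{\pi_1(e)\xi}=\ip{\pi_1(x)\xi}{\xi}$; restricting to $\xi\in\D(\pi)\subseteq\D(\pi_1)$ gives $\ip{\pi(x)\xi}{\xi}\geq0$, i.e. $\pi(x)$ is positive.

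For the converse, assume $\pi(x)\geq0$ for every such $\pi$ and fix an arbitrary $\vp\in\pppb$. I would invoke the GNS construction to obtain $(\pi_\vp,\lambda_\vp,\H_\vp)$ with $\D(\pi_\vp)=\lambda_\vp(\BB)$, $\pi_\vp(y)\lambda_\vp(a)=\lambda_\vp(ya)$, and $\vp(ya,b)=\ip{\pi_\vp(y)\lambda_\vp(a)}{\lambda_\vp(b)}$. Here $\pi_\vp$ is $(\tau,{\sf t}_s)$-continuous, exactly as in the proof of Proposition \ref{prop_finalnew}, because $\|\pi_\vp(y)\lambda_\vp(a)\|^2=\vp(ya,ya)=\vp_a(y,y)\leq p(y)^2$; moreover $\pi_\vp(e)\lambda_\vp(a)=\lambda_\vp(ea)=\lambda_\vp(a)$ shows $\pi_\vp(e)=I_{\D(\pi_\vp)}$. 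Thus $\pi_\vp$ is admissible, so by hypothesis $\pi_\vp(x)\geq0$; evaluating at $\eta=\lambda_\vp(a)\in\D(\pi_\vp)$ gives $\vp(xa,a)=\ip{\pi_\vp(x)\lambda_\vp(a)}{\lambda_\vp(a)}\geq0$ for every $a\in\BB$. Since $\vp\in\pppb$ was arbitrary, $x\in\A^+_\P$.

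The only genuine obstacle is the bookkeeping that guarantees the two classes match up: in the forward direction one must make sure $\vp_\pi^\xi$ is really an ips-form in $\pppb$ (which needs $\pi(\BB)\subset\LDO{\D(\pi)}$ and the density ({\sf ips}$_2$), both delivered by Proposition \ref{regrep} after passing to $\pi_1$) and that the unit is preserved under the extension; in the backward direction one must verify $\pi_\vp(e)=I$ and the $(\tau,{\sf t}_s)$-continuity of $\pi_\vp$. Everything else reduces to the routine identity $\vp_\pi^\xi(xe,e)=\ip{\pi(x)\xi}{\xi}$ and its mirror image in the GNS picture, so no hard analytic step remains once these membership checks are in place.
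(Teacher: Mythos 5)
Your proof is correct and follows essentially the same route as the paper's: both directions use exactly the two bridges the paper uses, namely the forms $\vp_\pi^\xi$ (shown to lie in $\pppb$ via Proposition \ref{regrep} and the continuity estimate, then evaluated at $a=e$) and the GNS representation $\pi_\vp$ (shown $(\tau,{\sf t}_s)$-continuous via $\vp(ya,ya)=\vp_a(y,y)\leq p(y)^2$). The only difference is that you spell out bookkeeping the paper delegates to its standing convention after Proposition \ref{regrep} --- the passage to $\pi_1$ with $\pi_1(\BB)\subset\LDO{\D(\pi_1)}$ and the checks $\pi_1(e)=I_{\D(\pi_1)}$, $\pi_\vp(e)=I_{\D(\pi_\vp)}$ --- which is harmless and, if anything, makes the argument more self-contained.
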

\begin{proof}
Let $x\in\A^+_\P$ and  { let $\pi$ be} a $(\tau, {\sf t}_s)$-continuous *-representation of $\A$ with $\pi(e)=I_{\D(\pi)}$.
The sesquilinear form $\vp_\pi^\xi$, defined by
$$
\vp_\pi^\xi(x,y):=\ip{\pi(x)\xi}{\pi(y)\xi}, \quad x,y \in \A,
$$
is a continuous ips-form as shown in the proof of Proposition \ref{prop_finalnew}.
Then,
$$
\vp_\pi^\xi(xa,a)=\ip{\pi(xa)\xi}{\pi(a)\xi}=\ip{(\pi(x)\mult\pi(a))\xi}{\pi(a)\xi};
$$
in particular, for $a=e$, $\ip{\pi(x)\xi}{\xi}\geq0$.

Conversely, let  $\vp\in\P $ and $\pi_\vp$ the corresponding GNS representation.
{   Then, as remarked in the proof of Proposition \ref{prop_finalnew}, $\pi_\vp$ is $(\tau, {\sf t}_{s})$-continuous.}
We have, for every $a \in \BB$,
$$
 \vp(xa,a)=\ip{\pi_\vp(x)\lambda_\vp(a)}{\lambda_\vp(a)}\geq 0,
 $$
 i.e., $x\in\A^+_\P.$
\end{proof}

\begin{prop}\label{prop_new_fr}
Let $\A$ be a fully-representable  *-topological partial *-algebra with multiplication core $\BB$ and unit $e \in \BB$. Assume that $\A[\tau]$ is a Fr\'echet space. Then the following statements are equivalent:
\begin{itemize}
  \item[(i)] $x\in\A_\P^+$;
  \item[(ii)]$\omega(x)\geq0$, $\forall \omega\in\R_c(\A,\BB)$.
\end{itemize}
\end{prop}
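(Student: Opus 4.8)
The plan is to prove the two implications separately, exploiting the machinery already assembled. The direction (i) $\Rightarrow$ (ii) should be the routine one: given $x \in \A_\P^+$ and any $\omega \in \rcab$, I want to realize $\omega(x)$ as an inner product and read off positivity. Since $\omega$ is representable, the GNS data $(\pi_\omega^\BB, \lambda_\omega^\BB, \H_\omega^\BB)$ exist, with $\pi_\omega^\BB(e) = I$ (using $e \in \BB$ and property (c$_1$)), so that $\omega(x) = \ip{\pi_\omega^\BB(x)\xi_\omega}{\xi_\omega}$ where $\xi_\omega = \lambda_\omega^0(e)$. By Proposition \ref{prop_pos}, membership of $x$ in $\A_\P^+$ forces $\pi(x)$ to be a positive operator for \emph{every} $(\tau,{\sf t}_s)$-continuous *-representation $\pi$ with $\pi(e) = I_{\D(\pi)}$; applying this to $\pi = \pi_\omega^\BB$ (which is $(\tau,{\sf t}_s)$-continuous because $\A$ is fully representable and Fréchet, so $\overline{\vp_\omega}$ is a continuous ips-form by Corollary \ref{cor: ipscontinuity}, forcing continuity of the associated representation) gives $\ip{\pi_\omega^\BB(x)\xi_\omega}{\xi_\omega} \geq 0$, i.e. $\omega(x) \geq 0$.

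For the harder direction (ii) $\Rightarrow$ (i), I would argue by contraposition: suppose $x \notin \A_\P^+$ and produce a representable continuous $\omega$ with $\omega(x) < 0$. The natural route is through the separation Corollary \ref{thm_separation2}. That corollary, however, separates points \emph{inside} the cone $\A_\M^+$ from $0$, whereas here I need to separate a point $x$ lying \emph{outside} $\A_\P^+$. So the first step is to show that the failure $x \notin \A_\P^+$ can be detected by a single ips-form: by definition of $\A_\P^+ = \A_{\pppb}^+$, there exist $\vp \in \pppb$ and $a \in \BB$ with $\vp(xa,a) < 0$. I would then set $\omega(y) := \vp(ya,a)$; exactly as in the proof of Corollary \ref{thm_separation2}, this $\omega$ is a continuous linear functional, and by Proposition \ref{3.3} (first part) it is representable with respect to $\BB$, hence $\omega \in \rcab$. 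With this choice $\omega(x) = \vp(xa,a) < 0$, contradicting (ii). This establishes (ii) $\Rightarrow$ (i).

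The main obstacle I anticipate is the interface between the abstract representability hypotheses and the continuity/regularity needed to invoke Proposition \ref{prop_pos}. Specifically, Proposition \ref{prop_pos} characterizes $\A_\P^+$ in terms of positivity of $\pi(x)$ for $(\tau,{\sf t}_s)$-continuous *-representations with $\pi(e) = I$, so in the (i) $\Rightarrow$ (ii) step I must be certain that $\pi_\omega^\BB$ genuinely falls into this class. This is where the full-representability and Fréchet hypotheses earn their keep: they guarantee, via Corollary \ref{cor: ipscontinuity}, that $\overline{\vp_\omega}$ is a continuous ips-form, and the associated GNS representation is then $(\tau,{\sf t}_s)$-continuous by the estimate used in Proposition \ref{prop_finalnew}. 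I would make this continuity verification explicit rather than assume it, since it is the step most easily overlooked. By contrast, the converse direction sidesteps these subtleties entirely, relying only on Corollary \ref{thm_separation2} and Proposition \ref{3.3}, both of which hold under the standing hypotheses without further ado.
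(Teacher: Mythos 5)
Your proposal is correct and follows essentially the same route as the paper's: the direction (ii) $\Rightarrow$ (i) is exactly the paper's argument (merely phrased contrapositively) via the continuous representable functionals $\omega_\vp^a(y)=\vp(ya,a)$ of Proposition \ref{3.3}, and the direction (i) $\Rightarrow$ (ii) rests on the same key fact that full representability plus the Fr\'echet hypothesis makes $\overline{\vp_\omega}$ a continuous ips-form with core $\BB$ (Proposition \ref{prop: 3.10} and Corollary \ref{cor: ipscontinuity}). The only cosmetic difference is that the paper evaluates $\omega(a^*xa)=\overline{\vp_\omega}(xa,a)\geq 0$ directly and sets $a=e$, whereas you detour through Proposition \ref{prop_pos} and the GNS triple of $\omega$, which costs you the additional (routine, and correct) identification of $\pi_\omega^\BB$ with the GNS representation of $\overline{\vp_\omega}$ on $\lambda_\omega^0(\BB)$.
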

\begin{proof}(i)$\Rightarrow$(ii): If  $x\in \A_\P^+$, then $\vp(xa,a)\geq0$, $\forall\vp\in\pppb$, $\forall a\in\Ao$. If $\omega\in\R_c(\A,\BB)$, by the assumptions and
by Proposition \ref{prop: 3.10} it follows that $\overline{\vp_\omega}$ is an everywhere defined ips-form and thus, by Corollary \ref{cor: ipscontinuity}, it is continuous. Hence,
$$
\omega(a^*xa)=\overline{\vp_\omega}(xa,a)\geq 0,\quad\forall a\in\BB.
$$
For $a=e$, we get that $\omega(x)\geq0$.

(ii)$\Rightarrow$(i): If $\omega(x)\geq0$, $\forall\omega\in\R_c(\A,\BB)$, then this also holds  for every linear functional $\omega_\vp^a$, $a \in \BB$, defined by $\vp\in\pppb$ as in
Proposition \ref{3.3}. Then
$$
\vp(xa,a)=\omega_\vp^a(x)\geq0,\; \forall\, \vp\in\pppb.
$$
By definition, this means that $x\in\A_\P^+.$
\end{proof}

In complete analogy with Proposition 3.9 of \cite{FTT}, one can prove the following

\begin{prop}\label{prop_ftt} Let $\A[\tau]$  be a *-semisimple *-topological partial *-algebra with multiplication core $\BB$.

Assume that the following condition \mbox{\sf{(P)}} holds:
\begin{align*}
\textrm{\sf (P)}
\,&\mbox{ $y \in \A$ and $\omega(a^*ya)\geq 0$, for every $\omega \in {\mc R}_c(\A,\BB)$ and $a \in \Ao$,}\\  &\mbox{imply $y \in  { \A^+(\BB)}$}.
\end{align*}
Then, for an element $x \in \A$, the following statements  are
equivalent:
\begin{itemize}
             \item[(i)] $x\in \A^+(\BB)$;
             \item[(ii)]$\omega(x)\geq 0$, for every $\omega \in {\mc R}_c(\A,\BB)$;
             \item[(iii)]$\pi(x)\geq 0$, for every $(\tau, {\sf t}_w)$-continuous $^*$-representation $\pi$ of $\A$.
           \end{itemize}

\end{prop}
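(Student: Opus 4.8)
The plan is to establish the three-way equivalence through the four implications (i)$\Rightarrow$(ii), (ii)$\Rightarrow$(i), (i)$\Rightarrow$(iii) and (iii)$\Rightarrow$(i), routing both converses to (i) through hypothesis ({\sf P}). The implication (i)$\Rightarrow$(ii) is immediate: each $\omega\in\rcab$ is $\tau$-continuous and, by ({\sf r}$_1$), nonnegative on the set $\BB^{(2)}$; passing to the $\tau$-closure $\A^+(\BB)$ then gives $\omega(x)\geq 0$ whenever $x\in\A^+(\BB)$.

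The core step is (ii)$\Rightarrow$(i). For fixed $\omega\in\rcab$ and $a\in\BB$ I would put $\omega_a(y):=\omega(a^*ya)$. Repeating the first part of the proof of Proposition \ref{prop: 3.10}---which uses only continuity of the involution (to make $y\mapsto a^*ya$ $\tau$-continuous) together with properties ({\sf r}$_1$)--({\sf r}$_3$) for $\omega$, and \emph{not} full representability---one shows that $\omega_a\in\rcab$. Applying hypothesis (ii) to $\omega_a$ yields $\omega(a^*xa)=\omega_a(x)\geq 0$ for every $\omega\in\rcab$ and every $a\in\BB$, whence condition ({\sf P}) delivers $x\in\A^+(\BB)$.

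For (i)$\Rightarrow$(iii) I would take a net $b_\alpha\to x$ in $\tau$ with $b_\alpha=\sum_k c_{\alpha,k}^*c_{\alpha,k}$, $c_{\alpha,k}\in\BB$; since $c\in\BB\subseteq R\A$ forces $c^*\in L(c)$, one has $\pi(c^*c)=\pi(c)\ad\mult\pi(c)$, so any *-representation satisfies $\ip{\pi(b_\alpha)\xi}{\xi}=\sum_k\|\pi(c_{\alpha,k})\xi\|^2\geq 0$ for all $\xi\in\D(\pi)$, and $(\tau,{\sf t}_w)$-continuity passes this to the limit, giving $\pi(x)\geq 0$. For (iii)$\Rightarrow$(i), given $\omega\in\rcab$ with GNS data $(\pi_\omega,\lambda_\omega^0,\H_\omega)$, the key is that $\pi_\omega$ is itself $(\tau,{\sf t}_w)$-continuous: by property (c) of the GNS construction $\ip{\pi_\omega(x_\alpha)\lambda_\omega^0(a)}{\lambda_\omega^0(b)}=\omega(b^*(x_\alpha a))$, and if $x_\alpha\to x$ in $\tau$ then continuity of the involution combined with ({\sf d}$_4$) makes left multiplication by $b^*$ $\tau$-continuous, while ({\sf d}$_5$) rewrites $b^*(x_\alpha a)=(b^*x_\alpha)a$, so $b^*(x_\alpha a)\to b^*(xa)$; $\tau$-continuity of $\omega$ then gives convergence on all vectors of $\D(\pi_\omega)=\lambda_\omega^0(\BB)$. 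Hence (iii) applies to $\pi_\omega$, yielding $\pi_\omega(x)\geq 0$ and therefore $\omega(a^*xa)=\ip{\pi_\omega(x)\lambda_\omega^0(a)}{\lambda_\omega^0(a)}\geq 0$ for all $a\in\BB$ and $\omega\in\rcab$; once more ({\sf P}) gives $x\in\A^+(\BB)$.

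The main obstacle is that both nontrivial directions (ii)$\Rightarrow$(i) and (iii)$\Rightarrow$(i) hinge essentially on hypothesis ({\sf P}), which is what upgrades the pointwise data $\omega(a^*xa)\geq 0$ to membership in the $\tau$-closed wedge $\A^+(\BB)$; feeding ({\sf P}) requires the twisted functionals $\omega_a$, and checking $\omega_a\in\rcab$ is the one genuine computation, already available from Proposition \ref{prop: 3.10}. A secondary technical point is the $(\tau,{\sf t}_w)$-continuity of the GNS representation $\pi_\omega$, which rests on the associativity identity ({\sf d}$_5$) and on continuity of the involution; note that throughout no unit is needed, since ({\sf P}) is invoked directly on the elements $a^*xa$ rather than on $x$ via a cyclic vector.
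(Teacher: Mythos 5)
Your proposal is correct and follows essentially the route the paper intends: the paper itself gives no proof, deferring to the analogy with Proposition 3.9 of \cite{FTT}, and your reconstruction --- (i)$\Rightarrow$(ii) from $\tau$-continuity and ({\sf r}$_1$) on the $\tau$-closed wedge, both converses channelled through condition ({\sf P}) via the twisted functionals $\omega_a$ (correctly extracted from the first, full-representability-free half of the proof of Proposition \ref{prop: 3.10}), and the $(\tau,{\sf t}_w)$-continuity of the GNS representations for (iii)$\Rightarrow$(i) --- is precisely that adaptation. The only points worth making explicit are that the computations establishing $\omega_a\in{\mc R}_c(\A,\BB)$ rely on the standing semi-associativity assumption made in Section \ref{sect_order} (not only on ({\sf d}$_5$)), and that, as you observe, *-semisimplicity is never invoked in the equivalences themselves: it serves only to replace the nonannihilation hypothesis of \cite{FTT}, as the remark following the proposition records.
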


\berem In \cite[Proposition 3.9]{FTT} it was required that the family ${\mc R}_c(\A,\Ao)$ of continuous linear functionals does not annihilate positive elements. This is always true for *-semisimple \pa s,
because of Proposition \ref{prop_Mcone}.
\enrem

The previous propositions allow to compare the different cones defined so far.
\begin{cor}\label{cor_compare} Under the assumptions of Propositions \ref{prop_new_fr} and \ref{prop_ftt}, one has $ \A^+(\BB) = \A^+_{\P}$.
\end{cor}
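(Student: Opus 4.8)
The plan is to observe that, under the combined hypotheses of the two quoted propositions, the cones $\A^+(\BB)$ and $\A^+_\P$ are characterized by one and the same condition, so that their coincidence is immediate.

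As a preliminary remark, one inclusion is already available for free: taking $\M=\pppb$ in \eqref{eq:3cones} yields $\A^+(\BB)\subseteq\A^+_{\rm top}\subseteq\A^+_\P$. Thus only the reverse inclusion $\A^+_\P\subseteq\A^+(\BB)$ would require work if one argued directly; however, the shortest route bypasses this and simply composes the two characterizations already at our disposal.

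The key step is to chain the relevant equivalences. Since $\A$ is fully representable, Fr\'echet and has unit $e\in\BB$, Proposition \ref{prop_new_fr} applies and tells us that, for $x\in\A$, one has $x\in\A^+_\P$ if and only if $\omega(x)\geq0$ for every $\omega\in\rcab$. On the other hand, since $\A$ is *-semisimple and condition \textsf{(P)} holds, the equivalence (i)$\Leftrightarrow$(ii) of Proposition \ref{prop_ftt} gives that $x\in\A^+(\BB)$ if and only if $\omega(x)\geq0$ for every $\omega\in\rcab$. The two membership conditions are thus literally the same, namely nonnegativity of $x$ under every continuous representable functional, whence $\A^+(\BB)=\A^+_\P$.

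I do not expect any genuine obstacle here: the only thing to verify is that the standing assumptions of the corollary are exactly the union of the hypotheses of Propositions \ref{prop_new_fr} and \ref{prop_ftt}, after which the statement is a purely formal consequence of the two characterizations. As a consistency check I would note that this equality collapses the chain \eqref{eq:3cones} into an identity, thereby forcing $\A^+_{\rm top}$ to coincide with both $\A^+(\BB)$ and $\A^+_\P$ as well.
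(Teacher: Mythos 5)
Your proof is correct and is essentially the paper's own argument: the corollary is stated there without proof precisely because it follows by chaining the equivalence (i)$\Leftrightarrow$(ii) of Proposition \ref{prop_new_fr} with (i)$\Leftrightarrow$(ii) of Proposition \ref{prop_ftt}, both of which characterize membership by the single condition $\omega(x)\geq 0$ for all $\omega\in\rcab$. Your closing observation that the inclusion chain \eqref{eq:3cones} (with $\M=\pppb$) then forces $\A^+_{\rm top}$ to coincide with both cones is also accurate.
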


\subsubsection{Order bounded elements}\label{sect_5}

Let $\A[\tau]$ be a topological partial *-algebra with multiplication core $\BB$ and  unit $e\in \BB$. As we have seen in Section \ref{sect_order}, $\A[\tau]$ has several natural orders, all related to the topology $\tau$.
Each one of them
can be used to define \emph{bounded} elements. We begin in a purely algebraic way  starting from an arbitrary $\BB$-admissible cone $\K$.

Let $x \in \A$; put $\Re (x) =\frac{1}{2}(x+x^*)$, $\Im (x)= \frac{1}{2i}(x-x^*)$. Then $\Re{x}, \Im(x) \in \A_h$ (the set of  { self-adjoint} elements of $\A$) and $x= \Re(x)+i\Im (x)$.

\bedefi An element $x\in \A$ is called $\K$-\emph{bounded} if there exists $\gamma \geq 0$ such that
$$
 \pm \Re(x) \leq \gamma e; \qquad \pm \Im(x)\leq \gamma e.
 $$
We denote by $\A_b(\K)$ the family of $\K$-bounded elements.
\findefi

The following statements are easily checked.
\begin{itemize}
\item[(1)]$ \alpha x+\beta y \in \A_b(\K), \quad \forall x,y \in \A_b(\K), \, \alpha, \beta \in {\mb C}$.
\item[(2)]$x \in \A_b(\K) \Leftrightarrow x^* \in \A_b(\K)$.
\end{itemize}
\berem
If $\A$ is a *-algebra then, as shown in \cite[Lemma 2.1]{schm_weyl}, one also has
\begin{itemize}
\item[(3)]$x, y\in \A_b(\K) \Rightarrow xy\in \A_b(\K)$.
\item[(4)]$a\in \A_b(\K)  \Leftrightarrow aa^* \in \A_b(\K)$.
\end{itemize}
These statements do not hold in general when $\A$ is a \pa. They are true, of course, for elements of $\BB$.
\enrem

For $x\in \A_h$, put
$$
\|x\|_b:= \inf\{\gamma>0:\, -\gamma e \leq x \leq \gamma e\}.
$$
$\|\cdot\|_b$ is a seminorm on the real vector space $(\A_b(\K))_h$.

\begin{lemma} Let $\M$ be sufficient. If $\K=\A^+_\M$, then $\|\cdot\|_b$ is a norm on $(\A_b(\M))_h$.
\end{lemma}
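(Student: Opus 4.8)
The plan is to observe that, since $\|\cdot\|_b$ is already known (from the line preceding the statement) to be a seminorm on $(\A_b(\M))_h$, the only thing left to establish is its definiteness, namely that $\|x\|_b=0$ forces $x=0$. So I would fix a hermitian, $\K$-bounded element $x$ with $\|x\|_b=0$ and aim to deduce $x=0$ by converting the resulting order estimates into vanishing conditions on the forms of $\M$, then appealing to sufficiency through Lemma \ref{lem:Msuff}.

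First I would unwind the hypothesis $\|x\|_b=0$. Since $x$ is hermitian we have $\Im(x)=0$, so the defining inequalities of $\K$-boundedness reduce to $-\gamma e\leq x\leq \gamma e$. The set of admissible $\gamma$ is upward closed: if $\gamma'\geq\gamma$ then $\gamma' e-x=(\gamma'-\gamma)e+(\gamma e-x)$ is a sum of elements of the wedge $\K=\A^+_\M$ (using $e\in\K$ and closure under nonnegative scalars and addition), and similarly for $x+\gamma' e$. Hence $\|x\|_b=0$ in fact forces $-\gamma e\leq x\leq \gamma e$ for \emph{every} $\gamma>0$, that is, $\gamma e\pm x\in\A^+_\M$ for all $\gamma>0$.

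Next I would translate these memberships into the inequalities defining $\M$-positivity. For every $\vp\in\M$ and $a\in\BB$, using $ea=a$ and hence $\vp(ea,a)=\vp(a,a)$, the condition $\gamma e\pm x\in\A^+_\M$ gives $\gamma\,\vp(a,a)\pm\vp(xa,a)\geq 0$, i.e. $|\vp(xa,a)|\leq \gamma\,\vp(a,a)$. Letting $\gamma\to 0^+$ yields $\vp(xa,a)=0$ for every $\vp\in\M$ and every $a\in\BB$.

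The final step is to invoke the sufficiency of $\M$: since $\M$ is sufficient, item (iii) of Lemma \ref{lem:Msuff} applies, and the vanishing of $\vp(xa,a)$ for all $\vp\in\M$ and $a\in\BB$ forces $x=0$, as required. I do not expect a genuine obstacle here; the content is entirely in correctly reducing the order relation to a form estimate, and the only delicate point is that one needs the full strength of sufficiency in the precise guise of Lemma \ref{lem:Msuff}(iii)—which is exactly the variant tailored to test elements of the shape $\vp(xa,a)$—rather than merely the separation property $\vp(x,x)\neq 0$.
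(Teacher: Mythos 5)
Your proof is correct and follows essentially the same route as the paper's: unwind $\|x\|_b=0$ into $-\epsilon e\leq x\leq \epsilon e$ for every $\epsilon>0$, translate this via $ea=a$ into $|\vp(xa,a)|\leq\epsilon\,\vp(a,a)$ for all $\vp\in\M$, $a\in\BB$, conclude $\vp(xa,a)=0$, and invoke sufficiency. You are in fact slightly more careful than the paper in two spots: you justify explicitly that the set of admissible $\gamma$ is upward closed (which the paper uses tacitly when passing from $\gamma_\epsilon<\epsilon$ to $-\epsilon e\leq x\leq\epsilon e$), and you pinpoint item (iii) of Lemma \ref{lem:Msuff} as the precise form of sufficiency needed, where the paper simply says ``by the sufficiency of $\M$.''
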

\begin{proof} By Proposition \ref{prop_Mcone}, $\A^+_\M$ is a cone.  Put $E=\{\gamma>0:\, -\gamma e \leq x \leq \gamma e\}$. If $\inf E=0$, then, for every $\epsilon >0$, there exists $\gamma_\epsilon\in E$ such that $\gamma_\epsilon <\epsilon$. This implies that $-\epsilon e \leq x\leq \epsilon e$. If $\vp \in \M$, we get
$-\epsilon \,\vp(a,a) \leq \vp(xa,a)\leq \epsilon\, \vp(a,a)$, for every $a \in \BB$. Hence, $\vp(xa,a)=0$. By the sufficiency of $\M$, it follows that $x=0$.
\end{proof}

\medskip Let $\A[\tau]$ be a *-semisimple  topological partial *-algebra with multiplication core $\BB$. We can then specify the wedge $\K$ as one of those defined above.  Take first $\K=\A^+_\M$,
where $\M=\pppb$ is the sufficient family of all continuous  { ips-forms}  with core $\BB$.
For simplicity, we write  { again} $\P:=\pppb$, hence $\A^+_\P:=\A^+_\pppb$ and  $\A_b(\P):=\A_b(\pppb)$.

\begin{prop}\label{prop_53} If $x \in \A_b(\P)$, then $\pi(x)$ is a bounded operator, for every  $(\tau, {\sf t}_s)$-continuous *-representation of $\A$.
Moreover, if $x=x^*$, $\|\pi(x)\|\leq \|x\|_b$.
\end{prop}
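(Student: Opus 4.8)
The plan is to reduce the assertion to the self-adjoint case and then to exploit that, for $x=x^*$, the operator $\pi(x)$ is symmetric with a numerical range controlled by $\gamma:=\|x\|_b$. Writing $x=\Re(x)+i\Im(x)$ with $\Re(x),\Im(x)\in\A_h$, the $\K$-boundedness of $x$ (for $\K=\A^+_\P$) is precisely the pair of relations $\pm\Re(x)\leq\gamma e$, $\pm\Im(x)\leq\gamma e$, so both $\Re(x)$ and $\Im(x)$ lie in $\A_b(\P)$. Since $\pi$ is linear, $\pi(\Re x)=\tfrac{1}{2}(\pi(x)+\pi(x)\ad)$ and $\pi(\Im x)=\tfrac{1}{2i}(\pi(x)-\pi(x)\ad)$; hence once boundedness is known for self-adjoint elements of $\A_b(\P)$, the operator $\pi(x)=\pi(\Re x)+i\pi(\Im x)$ is bounded as well. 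Thus it suffices to treat $x=x^*$.

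For self-adjoint $x$ with $-\gamma e\leq x\leq\gamma e$, i.e. $\gamma e-x,\ \gamma e+x\in\A^+_\P$, I would feed these relations into the ips-forms produced by $\pi$. We may assume $\pi(\BB)\subset\LDO{\D(\pi)}$, so that for each $\xi\in\D(\pi)$ the form $\vp_\pi^\xi(y,z)=\ip{\pi(y)\xi}{\pi(z)\xi}$ belongs to $\P=\pppb$, exactly as in the proof of Proposition \ref{prop_finalnew}. Evaluating the defining inequalities of $\A^+_\P$ on $\vp_\pi^\xi$ at the core element $a=e$ (and using $ee=e$, $xe=x$) gives $|\vp_\pi^\xi(xe,e)|\leq\gamma\,\vp_\pi^\xi(e,e)$, that is $|\ip{\pi(x)\xi}{\pi(e)\xi}|\leq\gamma\|\pi(e)\xi\|^2$. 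To turn this into a genuine numerical-range estimate I use that $\pi(e)$, being a symmetric element of $\LDO{\D(\pi)}$ with $\pi(e)\mult\pi(e)=\pi(ee)=\pi(e)$, is an idempotent contraction on $\D(\pi)$, so its closure $P=\overline{\pi(e)}$ is an orthogonal projection; the identities $\pi(e)\mult\pi(x)=\pi(x)=\pi(x)\mult\pi(e)$ show that $\pi(x)$ maps into $\mathrm{ran}\,P$ and factors through $P$, whence $\ip{\pi(x)\xi}{\xi}=\ip{\pi(x)(\pi(e)\xi)}{\pi(e)\xi}$ for every $\xi\in\D(\pi)$. Combining this with the previous bound yields
$$|\ip{\pi(x)\xi}{\xi}|\leq\gamma\|\pi(e)\xi\|^2\leq\gamma\|\xi\|^2,\qquad\forall\,\xi\in\D(\pi).$$

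Finally I would invoke the standard fact that a densely defined symmetric operator $T$ whose numerical range obeys $|\ip{T\xi}{\xi}|\leq\gamma\|\xi\|^2$ is automatically bounded with $\|T\|\leq\gamma$: polarizing the real form $(\xi,\zeta)\mapsto\Re\ip{T\xi}{\zeta}$ and homogenizing gives $|\ip{T\xi}{\zeta}|\leq\gamma\|\xi\|\|\zeta\|$ for all $\xi,\zeta$ in the domain, and density forces $\|T\xi\|\leq\gamma\|\xi\|$. Applied to $T=\pi(x)$ this shows $\pi(x)$ is bounded with $\|\pi(x)\|\leq\gamma$; taking the infimum over all admissible $\gamma$ gives $\|\pi(x)\|\leq\|x\|_b$. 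The main obstacle is the middle step, namely transferring the estimate from the vectors $\pi(e)\xi$ to all of $\D(\pi)$: this is immediate only when $\pi(e)=I_{\D(\pi)}$, and otherwise requires the projection analysis of $\pi(e)$ sketched above (equivalently, one may replace $\pi$ by its subrepresentation on $\overline{\pi(e)\D(\pi)}$, on which the unit acts as the identity, and then apply Proposition \ref{prop_pos} directly to $\gamma e\pm x$).
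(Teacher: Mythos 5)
Your proof is correct and follows essentially the same route as the paper: the paper's proof of Proposition \ref{prop_53} is a one-line appeal to Proposition \ref{prop_pos} together with the definitions, and what you have written is precisely that argument unfolded --- the decomposition $x=\Re(x)+i\Im(x)$, the forms $\vp_\pi^\xi \in \pppb$ used in the proofs of Propositions \ref{prop_finalnew} and \ref{prop_pos}, and the standard fact that a densely defined symmetric operator with numerical range in $[-\gamma,\gamma]$ is bounded with norm at most $\gamma$. Your only genuine addition is the careful treatment of representations with $\pi(e)\neq I_{\D(\pi)}$ via the symmetric idempotent $\pi(e)$ and its closure, a hypothesis that Proposition \ref{prop_pos} imposes but the statement of Proposition \ref{prop_53} omits, and which the paper's terse proof passes over in silence.
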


\begin{proof} This follows easily from Proposition \ref{prop_pos} and from the definitions.
\end{proof}

The following theorem generalizes \cite[Theorem 5.5 ]{FTT}.
\begin{thm}\label{thm_420}
Let $\A[\tau]$ be a fully representable,  semi-associative *-topo\-lo\-gi\-cal \pa,  with multiplication core $\BB$ and unit $e \in \BB$. Assume that $\A[\tau]$ is a Fr\'echet space.
Then the following statements are equivalent:
\begin{itemize}
  \item[(i)]$x\in\A_b(\P)$.
  \item[(ii)]There exists $\gamma_x>0$ such that
  $$
  |\omega(a^*xa)|\leq\gamma_x\omega(a^*a),\,\forall\,\omega\in\R_c(\A,\BB),\forall\, a\in\BB.
  $$
  \item[(iii)]There exists $\gamma_x>0$ such that
  $$
  |\omega(b^*xa)|\leq\gamma_x\omega(a^*a)^{1/2}\omega(b^*b)^{1/2},\,\forall\,\omega\in\R_c(\A,\BB),\forall\, a,b\in\BB.
  $$
\end{itemize}
\end{thm}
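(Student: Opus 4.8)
The plan is to show that, once the order bound in (i) is unfolded through the cone $\A^+_\P$ and the unit $e$, all three statements reduce to a single $\M$-boundedness condition for the \emph{same} family of ips-forms, read either off the diagonal (statements (i) and (iii)) or on the diagonal (statement (ii)). The engine making this translation possible is the two-way correspondence between $\rcab$ and $\pppb$ furnished by Proposition \ref{3.3}, Proposition \ref{prop: 3.10} and Corollary \ref{cor: ipscontinuity}. I first record that, since $\A$ is semi-associative and $\BB\subseteq R\A$ with $\BB\cdot\BB\subseteq\BB$, the core $\BB$ is associative, hence an algebra (Remark \ref{rem_semiass}), so that Proposition \ref{3.3} is applicable.

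Next I would pin down the correspondence precisely. For $\omega\in\rcab$, Proposition \ref{prop: 3.10} and Corollary \ref{cor: ipscontinuity} give a continuous ips-form $\overline{\vp_\omega}\in\pppb$ with $\overline{\vp_\omega}(ya,b)=\omega(b^*ya)$ for all $y\in\A$, $a,b\in\BB$; in particular $\overline{\vp_\omega}(a,a)=\omega(a^*a)$. Conversely, for $\vp\in\pppb$ and $c\in\BB$, Proposition \ref{3.3} yields $\omega_\vp^c\in\rcab$ with $\omega_\vp^c(y)=\vp(yc,c)$. Using semi-associativity and ({\sf ips}$_3$) one computes $\overline{\vp_{\omega_\vp^c}}=\vp_c$ on the dense set $\BB\times\BB$, hence everywhere by continuity; taking $c=e$ (where $ae=a$) gives $\overline{\vp_{\omega_\vp^e}}=\vp$. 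Thus the two families $\{\overline{\vp_\omega}:\omega\in\rcab\}$ and $\pppb$ coincide. Consequently, via $\omega\mapsto\overline{\vp_\omega}$, statement (iii) reads exactly as $|\vp(xa,b)|\le\gamma_x\vp(a,a)^{1/2}\vp(b,b)^{1/2}$ for all $\vp\in\pppb$, $a,b\in\BB$, that is, $x$ is $\M$-bounded with respect to $\pppb$ (Definition \ref{def_Mboun}), while statement (ii) is its diagonal restriction $b=a$.

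It then remains to unfold (i). By Definition \ref{def_cone}, $x\in\A_b(\P)$ means $\gamma e\pm\Re(x),\ \gamma e\pm\Im(x)\in\A^+_\P$ for some $\gamma\ge 0$; since $y\in\A^+_\P$ iff $\vp(ya,a)\ge 0$ for all $\vp\in\pppb$, $a\in\BB$, and $ea=a$, this is equivalent to $|\vp((\Re x)a,a)|\le\gamma\vp(a,a)$ and $|\vp((\Im x)a,a)|\le\gamma\vp(a,a)$ for all $\vp\in\pppb$, $a\in\BB$. To connect this diagonal bound for $\Re x,\Im x$ with the off-diagonal bound for $x$, note that for self-adjoint $y$ and $\vp\in\pppb$ the form $\psi(a,b):=\vp(ya,b)$ is Hermitian by ({\sf ips}$_3$), so the forms $\gamma\vp\pm\psi$ are positive semidefinite whenever $|\psi(a,a)|\le\gamma\vp(a,a)$; Cauchy--Schwarz for these forms gives $|\psi(a,b)|\le 2\gamma\vp(a,a)^{1/2}\vp(b,b)^{1/2}$, and the converse is immediate by setting $b=a$. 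Applying this to $\Re x$ and $\Im x$, and using that the off-diagonal bound is stable under the involution (via ({\sf ips}$_3$), which produces the bound for $x^*$) and under linear combinations, yields the equivalence between the reformulation of (i) and the $\M$-boundedness of $x$ with respect to $\pppb$. Combined with the previous paragraph this gives (i)$\Leftrightarrow$(iii); the same diagonal/off-diagonal passage, applied form-by-form to $\overline{\vp_\omega}$ and using ({\sf r}$_2$) in the form $\omega(a^*x^*a)=\overline{\omega(a^*xa)}$ to split off real and imaginary parts, gives (ii)$\Leftrightarrow$(iii).

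The routine Cauchy--Schwarz/polarization bookkeeping and the harmless change of constants are not the difficulty. The main obstacle is the \emph{exact} matching of the two families: establishing that every $\vp\in\pppb$ is realized as $\overline{\vp_\omega}$ for a suitable $\omega\in\rcab$, i.e. the identity $\overline{\vp_{\omega_\vp^e}}=\vp$, which is precisely what allows the bound (iii) over $\rcab$ to be upgraded to $\M$-boundedness over the \emph{full} family $\pppb$, and hence to (i). This step relies essentially on full representability, on the Fr\'echet hypothesis (through Corollary \ref{cor: ipscontinuity}, to guarantee continuity of $\overline{\vp_\omega}$), and on semi-associativity to manipulate the products $(b^*ya)c$.
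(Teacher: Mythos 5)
Your proof is correct, and it is organized genuinely differently from the paper's. You reduce all three conditions to a single one --- $\pppb$-boundedness of $x$ in the sense of Definition \ref{def_Mboun} --- via an \emph{exact} matching of the two families of forms: every $\overline{\vp_\omega}$, $\omega\in\rcab$, lies in $\pppb$ (Proposition \ref{prop: 3.10} plus Corollary \ref{cor: ipscontinuity}), and conversely every $\vp\in\pppb$ is recovered as $\overline{\vp_{\omega_\vp^e}}$. That identity does check out: $\vp_{\omega_\vp^e}$ agrees with $\vp$ on $\BB\times\BB$ (using ({\sf ips}$_3$), ({\sf d}$_5$) and the unit), and the $\tau$-continuity of $\vp$ forces its closure to coincide with $\vp$ on all of $\A$, since approximating nets from the $\tau\ha$-dense core are automatically $\lambda$-Cauchy. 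Your diagonal/off-diagonal passage, via Cauchy--Schwarz for the positive semidefinite Hermitian forms $\gamma\vp\pm\psi$, $\psi(a,b)=\vp(ya,b)$ with $y$ hermitian, is also sound (with the harmless constant $2\gamma$). The paper instead proves the cycle (i)$\Rightarrow$(iii)$\Rightarrow$(ii)$\Rightarrow$(i) using only the two \emph{one-way} assignments: for (i)$\Rightarrow$(iii) it passes through the GNS representation $\pi_{\overline{\vp_\omega}}$, which is $(\tau,{\sf t}_s)$-continuous, and invokes Proposition \ref{prop_53} (giving $\|\pi_{\overline{\vp_\omega}}(x)\|\leq\|x\|_b$ for hermitian $x$) followed by a Hilbert-space Cauchy--Schwarz; for (ii)$\Rightarrow$(i) it evaluates the hypothesis at $\omega=\omega_\vp^a$ with the unit in the slot and the normalization $u=a\,\vp(a,a)^{-1/2}$, so the closure identity $\overline{\vp_{\omega_\vp^e}}=\vp$ is never needed. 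What your route buys: it stays entirely at the level of sesquilinear forms, avoids representations and operator norms altogether, and makes transparent that (i)--(iii) are all synonyms for $\pppb$-boundedness --- in effect absorbing the equivalence (i)$\Leftrightarrow$(ii) of Theorem \ref{theorem_equiv}, which the paper proves separately by the same polarization device. What it costs: the extra matching identity and some bookkeeping of constants. One small mis-attribution: Remark \ref{rem_semiass} \emph{assumes} $\BB$ is an algebra rather than deriving this; but associativity of $\BB$ does follow directly from semi-associativity, since for $a,b,c\in\BB\subseteq R\A$ one has $b\in R(a)$, $c\in R\A$, hence $(ab)c=a(bc)$, so your use of Proposition \ref{3.3} is legitimate.
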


\begin{proof} It is sufficient to consider the case $x=x^*$.

(i) $\Rightarrow$ (iii) If $x=x^* \in\A_b(\P)$, there exists
$\gamma>0$ such that $-\gamma e \leq x \leq \gamma e$; or, equivalently,
$$ -\gamma \vp(a,a) \leq \vp(xa,a) \leq \gamma \vp(a,a), \; \forall\, \vp \in \P, \, a\in \BB.$$
Since $\A$ is fully representable, $D( \overline{\vp_\omega})=\A$ and, by Corollary \ref{cor: ipscontinuity}, it is a continuous  { ips-form} with core $\BB$. Thus, as seen in the proof of Proposition \ref{prop_finalnew}, $\pi_{\overline{\vp_\omega}}$ is $(\tau,{\sf t}_s)$-continuous. Hence, by Proposition \ref{prop_53}, $\pi_{\overline{\vp_\omega}}(x)$ is bounded and $\|\pi_{\overline{\vp_\omega}}(x)\|\leq \|x\|_b$. Therefore,
\begin{align*}
|\omega(b^*xa)|
&=|\overline{\vp_\omega(xa,b)}|\leq \overline{\vp_\omega}(xa,xa)^{1/2} \vp_\omega(b,b)^{1/2}\\
&= \|\pi_{\overline{\vp_\omega}}(x)\lambda_{\overline{\vp_\omega}}(a)\| \, \vp_\omega(b,b)^{1/2} \leq \|x\|_b \gamma_x \omega(a\ha a)^{1/2} \omega (b\ha b)^{1/2}.
\end{align*}

(iii) $\Rightarrow$ (ii) is obvious.

 (ii) $\Rightarrow$ (i) Assume now that there
exists $\gamma_x >0$ such that \begin{equation}\label{eq: bound}|\omega(a^*xa)|\leq \gamma_x
\omega(a^*a), \quad \forall \ \omega \in {\mc R}_c(\A,\BB), \,
a\in \BB .\end{equation} Define
$$
\widetilde{\gamma}:=\sup\{| \omega(a^*xa)|:
\omega \in {\mc R}_c(\A,\Ao), \, a \in \Ao, \omega(a^*a)=1 \}.
$$
Let $\vp \in \P$ and $a \in \BB$. By  Proposition \ref{3.3}, the linear functional $\omega_\vp^a$ defined by $\omega_\vp^a(x) = \vp(xa,a)$, $x \in \A$, is continuous and representable.
If $\vp(a,a)=0$, then, by \eqref{eq: bound}, $\vp(xa,a)=0$. If  $\vp(a,a)>0$,  we get
$$
\vp((\widetilde{\gamma} \, e\pm x)a,a) =\widetilde{\gamma}\,\vp(a,a) \pm \vp(xa,a)=\vp(a,a) (\widetilde{\gamma} \pm \vp(xu,u)) \geq 0,
$$
where $u = {a}{\vp(a,a)^{-1/2}}$.
Hence,  { by the} arbitrariness of $\vp$ and $a$,  { we have} $x \in \A_b(\P)$.
\end{proof}

We can now compare the notion of order bounded element with that of $\pppb$-bounded element given in Definition \ref{def_Mboun}.

\begin{thm}\label{theorem_equiv} Let $\A[\tau]$ be a *-semisimple topological partial *-algebra with multiplication core $\BB$ and unit $e\in \BB$.
 For $x \in \A$, the following statements are equivalent.
\begin{itemize}
\item[(i)] $x$ is $\pppb$-bounded.
\item[(ii)]  $x \in \A_b(\P)$.
 \item[(iii)]$\pi(x)$ is bounded, for every $\pi \in \rep$, and
    $$\sup\{\|\overline{\pi(x)}\|, \, \pi \in \rep \}<\infty.
    $$
\end{itemize}
\end{thm}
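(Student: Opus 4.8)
The plan is to establish the cycle (i) $\Rightarrow$ (ii) $\Rightarrow$ (iii) $\Rightarrow$ (i), after reducing to the hermitian case $x=x^*$. The reduction is safe because each condition for a general $x$ decouples into the same condition for $\Re(x)$ and $\Im(x)$: the $\pppb$-bounded elements form an involution-stable vector space (the defining inequality is subadditive, and ({\sf ips}$_3$) gives $\vp(x^*a,b)=\overline{\vp(xb,a)}$, so $x^*$ is $\pppb$-bounded whenever $x$ is); $\A_b(\P)$ is a vector space whose membership is phrased directly through $\Re(x),\Im(x)$; and $\pi(\Re(x))=\Re(\pi(x))$, $\pi(\Im(x))=\Im(\pi(x))$, so boundedness and the uniform bound split accordingly. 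From here on $x=x^*$.

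First I would prove (i) $\Rightarrow$ (ii). Specializing the definition of $\pppb$-boundedness to $b=a$ yields $|\vp(xa,a)|\leq \gamma_x\,\vp(a,a)$ for all $\vp\in\P$, $a\in\BB$. Since $x=x^*$, ({\sf ips}$_3$) together with the hermiticity of $\vp$ makes $\vp(xa,a)=\vp(a,xa)=\overline{\vp(xa,a)}$ real, so $-\gamma_x\vp(a,a)\leq\vp(xa,a)\leq\gamma_x\vp(a,a)$. Using $ea=a$, this reads $\vp((\gamma_x e\pm x)a,a)\geq 0$ for all $\vp\in\P$ and $a\in\BB$, i.e. $\gamma_x e\pm x\in\A^+_\P$, which is exactly $-\gamma_x e\leq x\leq\gamma_x e$; hence $x\in\A_b(\P)$.

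The implication (ii) $\Rightarrow$ (iii) is then immediate from Proposition \ref{prop_53}: for $x=x^*\in\A_b(\P)$, every $\pi\in\rep$ (being $(\tau,{\sf t}_s)$-continuous) has $\pi(x)$ bounded with $\|\overline{\pi(x)}\|\leq\|x\|_b$, so the supremum in (iii) is at most $\|x\|_b<\infty$. For (iii) $\Rightarrow$ (i), set $\gamma:=\sup\{\|\overline{\pi(x)}\|:\pi\in\rep\}$. Given any $\vp\in\P=\pppb$, its GNS representation $\pi_\vp$ is $(\tau,{\sf t}_s)$-continuous --- this is exactly the computation inside the proof of Proposition \ref{prop_finalnew} --- so $\pi_\vp\in\rep$ and $\|\overline{\pi_\vp(x)}\|\leq\gamma$. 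As $\vp$ is everywhere defined, $\lambda_\vp$ is defined on all of $\A$ and $\pi_\vp(x)\lambda_\vp(a)=\lambda_\vp(xa)$ for $a\in\BB$, whence
\[
\vp(xa,xa)=\|\lambda_\vp(xa)\|^2=\|\pi_\vp(x)\lambda_\vp(a)\|^2\leq\gamma^2\|\lambda_\vp(a)\|^2=\gamma^2\,\vp(a,a)
\]
for every $a\in\BB$. Since $\vp$ and $a$ are arbitrary, Proposition \ref{prop: char Mbounded} gives that $x$ is $\pppb$-bounded.

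The only delicate step is (iii) $\Rightarrow$ (i): I must turn a single uniform operator bound over the whole collection $\rep$ back into a form bound for each individual $\vp\in\pppb$. This works precisely because continuity of every $\vp\in\pppb$ forces its GNS representation into $\rep$, and because an ips-form is everywhere defined, so that $\lambda_\vp(xa)$ and the identity $\vp(xa,xa)=\|\pi_\vp(x)\lambda_\vp(a)\|^2$ remain meaningful even when $xa\notin\BB$. I would also double-check the hermitian reduction and confirm that Proposition \ref{prop_53} is invoked with its standing hypothesis $\pi(e)=I_{\D(\pi)}$ in force (via Proposition \ref{prop_pos}); beyond these quoted results and Proposition \ref{prop: char Mbounded}, no additional machinery is required.
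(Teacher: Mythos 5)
Your proof is correct and follows essentially the same route as the paper's: the same cycle (i) $\Rightarrow$ (ii) $\Rightarrow$ (iii) $\Rightarrow$ (i) after reduction to $x=x^*$, with the same GNS mechanisms (continuity of $\pi_\vp$ from Proposition \ref{prop_finalnew}, and the forms $\vp_\pi^\xi$). The only cosmetic differences are that you cite Proposition \ref{prop_53} for (ii) $\Rightarrow$ (iii) where the paper re-derives the bound directly via $\vp_\pi^\xi$, and you close (iii) $\Rightarrow$ (i) through the characterization in Proposition \ref{prop: char Mbounded} instead of the paper's polarization of $|\vp(xa,a)|\leq\gamma_x\vp(a,a)$ --- while also spelling out the hermitian reduction and the $\pi(e)=I_{\D(\pi)}$ caveat, both of which the paper leaves implicit.
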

 \begin{proof} It is sufficient to consider the case $x=x^*$.

(i) $\Rightarrow$ (ii): If $x=x^*$ is $\pppb$-bounded, we have, for some $\gamma>0$,
$$
 -\gamma \vp(a,a) \leq \vp(xa,a)\leq \gamma\vp(a,a), \; \forall \,\vp \in \P, a \in \BB.
$$
This means that $-\gamma e \leq x \leq \gamma e$ in the sense of the order induced by $\A^+_\P$. Hence $x \in \A_b(\P)$.

(ii) $\Rightarrow$ (iii): Let $\pi \in \rep$ and $\xi \in \D(\pi)$. Define $\vp_\pi^\xi$ as in the proof of Proposition \ref{prop_pos}. Then $\vp_\pi^\xi \in \P$.
Hence by (ii), $|\vp_\pi^\xi(xa,a)|\leq \gamma_x \vp_\pi^\xi(a,a)$, for some $\gamma_x>0$ which depends on $x$ only. In other words, $|\ip{\pi(x)\xi}{\xi}|\leq \gamma_x \|\xi\|^2$.
This in turn easily implies that $|\ip{\pi(x)\xi}{\eta}|\leq \gamma_x \|\xi\|\|\eta\|$, for every $\xi, \eta \in \D(\pi)$. Hence $\pi(x)$ is bounded and $\|\overline{\pi(x)}\|\leq \gamma_x$.

 (iii) $\Rightarrow$ (i): Put $\gamma_x:= \sup\{\|\overline{\pi(x)}\|, \, \pi \in \rep \}$. Then
 $$
  |\ip{\pi(x)\xi}{\xi}|\leq \|\pi(x)\xi\| \|\xi\|  \leq \gamma_x \|\xi\|^2, \; \forall\, \xi \in \D_\pi.
  $$
This in particular holds for the GNS representation $\pi_\vp$ associated to any $\vp \in \pppb$, {   since $\pi_\vp$ is $(\tau, {\sf t}_s)$-continuous}.
Hence, for every $a \in \BB$, we get
$$|\vp(xa,a)| = |\ip{\pi_\vp(x) \lambda_\vp(a)}{\lambda_\vp(a)}|\leq \gamma_x \| \lambda_\vp(a)\|^2= \gamma_x \vp(a,a).$$
Using the polarization identity, one finally gets
$$
|\vp(xa,b)|\leq\gamma_x\, \vp(a,a)^{1/2}\varphi(b,b)^{1/2}, \;\forall\, \vp\in\P,\, a,b\in\BB.
$$
This proves that $x$ is $\pppb$-bounded.
\end{proof}

Theorem \ref{theorem_equiv} shows that, under the assumptions we have made, order boundedness is nothing but the $\M$-boundedness studied in \cite{att_2010}.
So all results proved there apply to the present situation (in particular those concerning the structure of the topological \pa\ under consideration and its spectral properties).
Clearly, the crucial assumption is the existence of sufficiently many continuous ips-forms, that is, the *-semisimplicity.

 \beex In particular, Theorem \ref{theorem_equiv} shows that, in $\LDH$ (see Example \ref{ex_33} for notations), bounded elements defined by $\M$ and those defined by the order
coincide and (as expected) the family of bounded elements is $\LBDH$. Of course, one could get this result directly, using well-known properties of operators.

Also in the case of $L^p$-spaces ($p>2$) considered in Example \ref{ex_34}, one obtains that the two notions of boundedness coincide and the bounded part is exactly $L^\infty(X)$,
as can also be proved by elementary arguments.

\enex

\medskip So far we have considered the order boundedness defined by the cone $\A^+_{\P}$, but other choices are possible. For instance we may consider the order induced by $\A^+(\BB)$.
It is clear that if $x \in \A_b (\A^+(\BB))$ then $x \in \A_b({\P})$. On the other hand, if $x \in \A_b({\P})$ and the assumptions of Theorem \ref{thm_420} hold,
there exists $\gamma_x>0$ such that
$$
|\omega(a^*xa)|\leq\gamma_x\omega(a^*a),\,\forall\,\omega\in\R_c(\A,\BB),\forall\, a\in\BB.
$$
 { Hence, if condition \sf (P)} holds too,
 we can conclude, by adapting the argument used in the proof of Theorem \ref{thm_420}, that $x \in \A_b (\A^+(\BB))$. We leave a deeper analysis of the general question to future papers.

\section{Concluding remarks}

As we have discussed in the Introduction, the notion of bounded element for a topological partial *-algebra plays an important  role for the whole discussion.
We have at hand two different notions,  one ($\M$-boundedness)  based on a sufficient family of ips-forms, and another one (order boundedness) based on some
 $\BB$-admissible wedge, where $\BB$ is a multiplication core. Both seem very reasonable definitions and, as we have seen, they  can be compared    in many occasions.
In the framework of (topological) \mbox{*-algebras}, it is even possible that every element is order bounded (see examples in \cite[Section 5]{schmd_alg_geom}).\footnote{
The terminology adopted in that paper comes from algebraic geometry, so that an admissible cone is called there a \emph{quadratic module}.}
 The analogous situation for partial *-algebras is unsolved (in other words we do not know if there exist topological partial \mbox{*-algebras} where every element
 is order bounded) and we conjecture that a  \emph{complete} topological partial *-algebra $\A$ whose elements are all bounded is necessarily an algebra.
 This is certainly true in the case where $\M$-boundedness is considered, where $\M$ is a well-behaved family of ips-forms in the sense of
Definition 4.26 of \cite{att_2010}. Indeed, as shown there (Proposition 4.27), under these assumptions the set of $\M$-bounded elements is a C*-algebra.
The same, of course, holds true in the situation considered in Theorem \ref{theorem_equiv}, if the family $\pppb$ is well-behaved.
However, the general question is  open.


\begin{thebibliography}{100}

\bibitem{ait_book} J-P. Antoine, A. Inoue and C. Trapani, \textit{Partial *-algebras and their operator realizations}, Kluwer, Dordrecht, 2002.

\bibitem{att_continuoushomom} {J-P. Antoine, C. Trapani and F. Tschinke}, \textit{Continuous *-homomorphisms of Banach
Partial *-algebras},  Mediterr. j. math. {4}  (2007), 357--373.

\bibitem{antratsc} {J-P. Antoine, C. Trapani and F. Tschinke}, \textit{Spectral properties  of partial *-algebras}   Mediterr. j. math. {7}  (2010), 123--142.

\bibitem{att_2010}J-P. Antoine, C.Trapani and F. Tschinke, \textit{Bounded elements in certain topological partial *-algebras}, Studia Math. {203} (2011), 223--251.

\bibitem{bit_reprfunct} F. Bagarello, A. Inoue and C.Trapani, \textit{Representable linear functionals on partial *-algebras}, Mediterr. j. math. {9} (2012), 153-163.

\bibitem{bt_ellepi} F. Bagarello and C.Trapani, \textit{$L^p$-spaces as quasi *-algebras}, J. Math. Anal. Appl . {197} (1996), 810-824.

\bibitem{FTT}M. Fragoulopoulou, C. Trapani and S. Triolo, \textit{Locally convex quasi *-algebras with sufficiently many *-representations},
J. Math. Anal. Appl. 388 (2012), 1180-1193.

\bibitem{schmu} K. Schm\"udgen, \textit{Unbounded operator algebras and representation theory}, Birkh\"auser Verlag, Basel, 1990.

\bibitem{schm_weyl} {K. Schm\"udgen}, \textit{A strict Positivstellensatz for theWeyl algebra}, Math. Ann. {331} (2005), 779--794.

\bibitem{schmd_alg_geom} {K. Schm\"udgen}, \textit{Noncommutative real algebraic geometry -
Some basic concepts and first ideas}, in \textit{Emerging Applications in  Algebraic Geometry}, ed. by M. Putinar and S. Sullivant, Springer, 2009.

\bibitem{ct_ban} C. Trapani, \textit{*-Representations, seminorms and structure properties of normed *-algebras},
Studia Mathematica, Vol. 186, 47-75 (2008).

\bibitem{tratsc} {C. Trapani and F. Tschinke}, \textit{Unbounded C*-seminorms and biweights on
partial *-algebras} Mediterr. j. math. {2} (2005) 301--313.

\bibitem{vidav}{I. Vidav}, \textit{On some *-regular rings}, Acad. Serbe Sci. Publ. Inst. Math. {13} (1959) 73--80.
\end{thebibliography}
\end{document}